\newif\ifEJP
\newif\ifArxiv
\providecommand{\AMSSUBJ}[1]
{
  \small	
  \textbf{\textbf{MSC2020 subject classifications:}} #1
} 
\numberwithin{equation}{section} 
\DeclareMathAlphabet{\mathpzc}{OT1}{pzc}{m}{it}
\newtheorem{theorem}{Theorem}[section]
\newtheorem{lemma}[theorem]{Lemma}
\newtheorem{remark}[theorem]{Remark}
\newtheorem{proposition}[theorem]{Proposition}
\newtheorem{assumption}[theorem]{Assumption}
\newtheorem{definition}[theorem]{Definition}
\newtheorem{example}[theorem]{Example}
\newenvironment{acks}
{
  \noindent\textbf{Acknowledgements:}\begin{itshape}%
}%
{\end{itshape}}
\newcommand{\ho}[1]{\textcolor{RedOrange}{(HO: #1)}}
\newcommand{\nl}[1]{{\overline #1}}
\newcommand{\mom}[2]{\langle #1 \rangle_{#2}}
\newcommand{\upx}{\mathbf{u}^{\inf}}
\newcommand{\upy}{\mathbf{u}^{\sup}}
\newcommand{\N}     {\mathbb{N}}        
\newcommand{\Q}     {\mathbb{Q}}        
\newcommand{\R}     {\mathbb{R}}        
\newcommand{\prob}     {\mathbb{P}}        
\newcommand{\E}{\mathbb{E}}
\newcommand{\Pb}{\mathbb{P}}
\newcommand{\FF}    {\mathcal{F}}
\newcommand{\GG}    {\mathcal{G}}
\newcommand{\Ps}    {\mathcal{P}}
\newcommand{\Lip}{\textbf{Lip}}
\newcommand{\Couplings} {\mathbf{\Pi}}
\newcommand{\inner}[2] { \left \langle #1 , #2  \right \rangle }
\DeclareMathOperator{\Law}{\textbf{Law}}
\DeclareMathOperator{\Was} {\textbf{Was}}
\newcommand{\wZ} {{Z}^{\downarrow}}
\newcommand{\Zbo}{\mathbf{Z}}
\newcommand{\dif} {X}
\newcommand{\nlDif}{\nl{X}}
\newcommand{\nlLaw}{\nl{\mu}}
\newcommand{\difMu} {X^{(\mu)}}
\newcommand{\difNu} {X^{(\nu)}}
\newcommand{\self} {\overline{X}}
\newcommand{\NLchain}{\nl{Z}} 
\newcommand{\NLchainP}{\nl{Z}^{(\NLpf)}} 
\newcommand{\NLchainQ}{\nl{Z}^{(\NLqf)}} 
\newcommand{\sNLchainN}{\nl{X}^{(N)}} 
\newcommand{\wM} {{M}^{\downarrow}}
\newcommand{\sPSsM}{\mathbb{X}^{(N)}} 
\newcommand{\bmath}[1]{\boldsymbol{#1}}
\newcommand{\pf}{{\bf p}}
\newcommand{\pfn}{{\bf p}_N}
\newcommand{\NLpf}{{\bf \nl{p}}}
\newcommand{\NLqf}{{\bf \nl{q}}}
\newcommand{\NLpfn}{{\bf \nl{p}}_N}
\newcommand{\qf}{{\bf q}}
\newcommand{\p}{{p}}
\newcommand{\NLb}{{\bf \nl{b}}}
\newcommand{\bb}{{\bf b}}
\newcommand{\psd}[1]{\mathbf{Z}^{(\delta,\NLpf,{#1})}}
\newcommand{\psdi}[1]{{Z}^{(\delta,\NLpf,{#1})}}
\newcommand{\psod}[1]{\mathbf{Z}^{(\NLpf,{#1})}}
\newcommand{\psodi}[1]{{Z}^{(\NLpf,{#1})}}
\newcommand{\psdN}[1]{\mathbf{Z}^{(\delta_N,\NLpf_N,{#1})}}
\newcommand{\nlZn}{\nl{Z}^{(\delta_N,\NLpfn)}}
\newtheorem*{theorem*}{Theorem}
\newcommand{\domby}{\le_{\textbf{st}}}
\renewcommand{\dd}{{\rm d}}
\newcommand{\unif}{{\rm Unif}([0,1])}
\newcommand{\betadist}{{\rm Beta}}
\newcommand{\gammadist}{{\rm Gamma}}
\newcommand{\articleName}{Wright--Fisher kernels: from linear to non-linear dynamics, ergodicity and McKean--Vlasov scaling limits}
\newcommand{\articleAbstract}{We study the evolution of a pathogen with two allelic types infecting a population of hosts, where within-host type frequencies evolve in discrete time. Our framework is built on a two-parameter family of transition kernels on $[0,1]$, which describe one-step updates of type frequencies.
In the absence of host interaction, the single-host type-frequency process admits, for a broad class of parameters, a moment dual with a branching–coalescing structure reminiscent of the \emph{Ancestral Selection Graph}. Under suitable parameter and time scalings, it converges to a Wright--Fisher diffusion with drift. To incorporate interactions among hosts, we introduce a mean-field mechanism whereby within-host dynamics depend on the empirical type distribution across the population. We prove uniform-in-time propagation of chaos, comparing the dynamics in a typical host with a corresponding non-linear Markov chain. Under appropriate scaling, this non-linear chain converges to a McKean–Vlasov Wright–Fisher diffusion. As an illustration, we analyse a model where mutation rates depend on the current type distribution across hosts and establish uniform-in-time propagation of chaos together with ergodicity of the limiting McKean–Vlasov equation.
}
	\title[Wright--Fisher kernels: from linear to non-linear dynamics]{\articleName}
    \author{F. Cordero\affmark[1] \and C. Jorquera \affmark[2] \and H. Olivero\affmark[3]\and  L. Videla \affmark[4]}
\address{\newline\noindent\affmark[1]BOKU University, Institute of Mathematics, Vienna, Austria.\newline
\affmark[2]Universidad de Santiago de Chile, DMCC, Chile.\newline 
\affmark[3]Universidad de Valpara\'iso, CIMFAV and Instituto de Ingenier\'ia Matem\'atica, Chile.\newline
\affmark[4]Universidad de Santiago de Chile, DMCC, Chile.}
\email{\affmark[1]fernando.cordero@boku.ac.at, \affmark[2] christian.jorquera@usach.cl, \affmark[3]hector.olivero@uv.cl,\newline \affmark[4]leonardo.videla@usach.cl}
\date{}                     
\newcommand*{\affmark}[1][*]{\textsuperscript{#1}}
\begin{document}

\ifArxiv
	\maketitle
	\begin{abstract}
	    \articleAbstract
	\end{abstract}
	
	\bigskip
{\small MSC: Primary 60K35; secondary 92D10; 92D15 

Keywords: Population genetics; Wright--Fisher processes; non-linear Markov chains;  McKean--Vlasov diffusions; propagation of chaos}
\fi

\tableofcontents

\ifArxiv

\fi
\section{Introduction}
Consider a large population of host individuals infected by a contagious pathogen. 
The pathogen reproduces much faster than its host; thus, over the evolutionary 
timescale of the pathogen, the host population size may be regarded as constant.  
We focus on a diallelic locus in the pathogen genome (with alleles $0$ and $1$) 
that determines two distinct phenotypic strains. Within each host, the allele 
frequency evolves under mutation and selection. In addition, we assume that the pathogen dynamics inside each host are influenced by the global state of the population. More precisely, mutation and/or selection 
rates may depend not only on the local allele frequency within a host but also on 
the empirical distribution of allele frequencies across all hosts. This type of 
weak, population-level coupling is captured mathematically through a mean-field 
interaction and gives rise to a non-linear evolution at the host level.

This dynamic can be illustrated through the following experimental scenario. Consider many identical bacterial cultures (hosts), each infected 
by bacteriophages (the pathogen) of two monitored strains. Strain~$0$ is highly 
virulent but environmentally sensitive, whereas Strain~$1$ is less virulent but 
more robust. Within each culture, phage replication is subject to mutation between 
strains and to local selection. At regular times, the experimenter estimates the 
overall frequency of Strain~$1$ across all cultures. This empirical average then 
determines an environmental adjustment (e.g.\ a $pH$ shift) that subsequently 
affects selection locally in every culture. For instance, if resistance becomes 
widespread, the induced environmental change may intensify selection against 
Strain~$0$ globally. Thus, the selective pressure experienced by pathogen populations 
within individual cultures depends, in part, on the collective state of the system. 
This feedback produces a mean-field regime and leads to non-linear evolutionary 
dynamics.

The setting described above is characteristic of current research at the interface of ecology and evolution (see, e.g., \cite{urban2008} and, in a closely related context, \cite{stubbendieck2016}). Multi-level eco-evolutionary models, here involving a host level and a community level, have been considered previously. For instance, \cite{luo2014, luo2017} study an urn–ball scheme in which a trait evolves locally according to a Moran model, being advantageous at the individual level but disadvantageous at the group level.
Here, we focus on large communities of hosts whose size remains approximately constant over relevant time scales. Our goal is to quantify how interactions between individuals, the local community, and possibly more distant communities influence pathogen evolution, with particular emphasis on the long-term dynamics.
We introduce a simple modeling framework addressing the scenario above at two levels of complexity:
\begin{itemize}
\item \textbf{Single-host level (one-level dynamics):} evolution of the frequency of allele~$0$ in the pathogen population within a single host, ignoring inter-host interactions.
\item \textbf{Deme level (two-level dynamics):} evolution under non-negligible interactions among hosts within a deme.
\end{itemize}
The framework naturally extends to a third \emph{meta-community} level in which hosts from different demes interact, allowing one to explore how abiotic or biotic heterogeneity affects pathogen evolution and coinfection dynamics. To keep the presentation focused, we restrict attention here to the first two levels; the meta-community extension will be pursued in future work.

In our model the fundamental entities are the hosts (as in mathematical epidemiology) rather than the pathogen particles (as in classical population genetics). This choice is particularly appropriate when empirical data are collected at the host level. The pathogen’s genetic evolution is represented in a mesoscopic, mass-action fashion, abstracting from the behavior of individual alleles. Nevertheless, we show that this formulation is consistent with familiar allele-level models such as the Wright–Fisher, Moran, and Cannings models. Moreover, it provides a tractable means of incorporating host-level dynamics into intra-deme interactions.

Our construction begins with a two-parameter family $(P_{\delta,p})_{\delta,p\in[0,1]}$ of kernels on $[0,1]$, which govern one-step transitions in both one- and two-level dynamics. We call these \emph{Wright--Fisher kernels}. They exhibit strong ergodicity and monotonicity properties that carry over to the models built from them.

Building on these kernels, the paper develops two modeling layers. At the 
single-host level, we introduce a discrete-time Wright--Fisher--type 
model in which within-host evolutionary forces are encoded by a function~$\pf$. 
For suitable choices of~$\pf$, the resulting dynamics admit a moment dual. In the \emph{neutral case} (i.e.\ when $\pf$ is the identity), the dual corresponds to a discrete-time analogue of the 
$\Lambda$-coalescents from population genetics (see \cite{Ber09} for a review). More generally, when $\pf$ additionally incoporates mutation and a specific form of frequency-dependent selection, the dual process takes the form of a branching--coalescing system with killing. Finally, under diffusive rescaling, the model converges to a family of 
Wright--Fisher diffusions with drift.

We then consider a nonlinear extension in which the local evolutionary parameters depend not only on the within-host composition but also on the ambient composition of the host population (a mean-field effect). Under mild regularity, this nonlinear Markov chain admits a unique stationary distribution and converges to it.

The second layer consists of $M$ interacting hosts coupled through their empirical distribution. Conditional on this empirical measure, pathogen populations evolve independently within hosts. We prove \emph{uniform-in-time propagation of chaos}: when suitably initialized, the law of a typical host remains uniformly close to that of the nonlinear single-host model. In a diffusive scaling regime, the nonlinear model converges to a McKean--Vlasov--type Wright--Fisher diffusion, yielding a continuous-time description of the two-level dynamics.

The paper is organised as follows. Section~\ref{sec:overview} outlines the main results. Section~\ref{ss3.1} develops the Wright--Fisher kernels and their core properties. Section~\ref{section:host_level} presents the single-host model, its duality, and diffusion limits. Section~\ref{section:deme_level} introduces the nonlinear mean-field model and establishes convergence towards a McKean--Vlasov limit. Section \ref{sec:prop_chaos_results} introduces the interacting particle system modeling the two-layers dynamics and establishes a uniform propagation of chaos result. Section~\ref{sec:case_study} provides a case study, and Section~\ref{section:discussion} discusses open directions. Technical material and deferred proofs are collected in the Appendices \ref{section:appendix} and \ref{ss:existence_MV}.

\section{Preliminaries and main results}\label{sec:overview}
\subsection{Notations and preliminaries}\label{sec:notation}
Let $(E,d)$ be a metric space and $\mathcal{B}(E)$ its Borel $\sigma$-algebra; when no confusion arises, we also write $\mathcal{B}(E)$ for the space of real-valued Borel measurable functions.  
We denote by $\mathcal{P}(E)$ the space of probability measures on $(E,\mathcal{B}(E))$, endowed with the topology of weak convergence; for $E=[0,1]$ we write simply $\mathcal{P}$.  
For $\mu \in \mathcal{P}(\R)$, we write $\mom{\mu}{k}$ for its $k$-th moment and $\mathrm{var}(\mu)$ for its variance.

Given a probability space $(\Omega,\mathcal{F},\mathbb{P})$, a measurable space $(E,\mathcal{E})$, and an $E$-valued random variable $X$, we denote the law of $X$ by $\Law_\mathbb{P}(X)$, or simply $\Law(X)$ when unambiguous.  
We write $U \sim \mathrm{Unif}([0,1])$ if $U$ is uniformly distributed on $[0,1]$.

A \emph{transition kernel} on $(E,\mathcal{E})$ is a map
$Q : E \times \mathcal{E} \to \R_+$
such that $A \mapsto Q(x,A)$ is a probability measure for every $x \in E$ and $x \mapsto Q(x,A)$ is measurable for every $A \in \mathcal{E}$.  
Its action on bounded measurable $f$ and on $\mu \in \mathcal{P}(E)$ is given by
\[
Qf(x) \coloneqq \int_E f(y)\,Q(x,\mathrm{d}y), 
\qquad 
\mu Q(A) \coloneqq \int_E Q(x,A)\,\mu(\mathrm{d}x).
\]

For $\mu,\nu \in \mathcal{P}(\R)$, we say that $\mu$ is \emph{stochastically dominated} by $\nu$, written $\mu \domby \nu$, if
\[
\mu((t,\infty)) \le \nu((t,\infty)) \quad \text{for all } t \in \R.
\]
Equivalently, there exists a coupling $(X,Y)$ with $\Law(X)=\mu$, $\Law(Y)=\nu$, and $X \le Y$ a.s.; see~\cite{lindvall2002}.

\medskip
Assume $(E,d)$ is Polish with bounded metric.  
For $\mu,\nu \in \mathcal{P}(E)$, let $\Couplings(\mu,\nu)$ denote the set of couplings of $(\mu,\nu)$.  
The $d$-Wasserstein distance is
\[
\Was_d(\mu,\nu)
  \coloneqq \inf_{\pi \in \Couplings(\mu,\nu)} \int_{E^2} d(x,y)\,\pi(\mathrm{d}x,\mathrm{d}y)
  = \inf_{\pi \in \Couplings(\mu,\nu)} \mathbb{E}_\pi[d(X,Y)],
\]
where $(X,Y) \sim \pi$.  
It defines a metric on $\mathcal{P}(E)$, satisfies that $\Was_d$--convergence implies weak convergence, and the infimum is attained~\cite[Thm.~4.1]{villani2009}.  
Moreover, $(\mathcal{P}(E),\Was_d)$ is Polish.  
The dual representation
\begin{equation}\label{eq:duality_was}
\Was_d(\mu,\nu)=\sup_{\Lip(f)\le 1}\bigl(\mu f - \nu f\bigr)
\end{equation}
holds, where $\Lip(f)$ is the minimal $d$-Lipschitz constant of $f$.  
When $E=[0,1]$ with its usual metric, we often write $\Was$ for~$\Was_d$.

\medskip
The total variation distance is
\[
d_{TV}(\mu,\nu) \coloneqq \inf_{\pi \in \Couplings(\mu,\nu)} \mathbb{P}_\pi(X \neq Y).
\]
If ${\rm diam}(E)$ denotes the diameter of $(E,d)$, then
\begin{equation}\label{eq:wass-TV-inequality}
\Was_d(\mu,\nu) \le {\rm diam}(E)\, d_{TV}(\mu,\nu).
\end{equation}

If $E$ is complete and separable, we denote by $\mathcal{C}([0,T];E)$ the space of continuous paths, equipped with the supremum norm $\lVert\cdot\rVert_\infty$.  
We write $\mathbb{D}([0,T];E)$ for the space of c\`adl\`ag paths, equipped with the Billingsley metric generating the $J_1$--Skorokhod topology; this space is also Polish~\cite{billingsley1999}.  
In both cases, the Borel $\sigma$-algebra is generated by the coordinate maps $x \mapsto x_t$, ($t \in [0,T]$). These maps are continuous for the supremum norm on $\mathcal{C}([0,T];E)$, but not for the $J_1$--Skorokhod topology.

\subsection{Main results}\label{sec:overview-results}

The aim of this section is to provide a more detailed preview of the article and to guide the reader through the various developments. We refrain from giving a comprehensive summary of our findings, as this would require the introduction of a substantial amount of notation.

As stated in the introduction, the one-level and two-level models are based on a two-parameter family $(P_{\delta,\p})_{\delta,\p \in [0,1]}$ of kernels on $[0,1]$. These kernels act on bounded measurable functions $h : [0,1] \mapsto \R$ via
\begin{align*}
	P_{\delta, \p} h (x) \coloneqq \p\,\E\big[h \big(x+(1-x)\delta U\big)\big] + (1-\p)\, \E\big[h \big(x(1-\delta U)\big)\big], \quad \delta,\p,x \in [0,1],
\end{align*}
where $U \sim \unif$. We refer to these kernels as \emph{Wright--Fisher kernels}; see Fig.~\ref{fig:transport} for an illustration of the action of the kernel. 
For fixed \(\delta,\p \in [0,1]\), the discrete-time Markov chain driven by
\(P_{\delta,\p}\) describes the evolution of the fraction of type-\(0\)
(pathogen) particles within a host. If the current proportion of type-\(0\)
particles is \(x \in [0,1]\), then a random fraction \(\delta U\), with
\(U \sim \unif\) of the population dies and is replaced by clonal offspring:
with probability \(\p\) from a type-\(0\) particle, and with probability
\(1-\p\) from a type-\(1\) particle. The boundary states \(x=0\) and \(x=1\)
correspond to extinction and fixation of type~\(0\), respectively. Note that for initial
states in \((0,1)\) and fixed \(\p\in(0,1)\), neither boundary is reached in finite time.

This mechanism is reminiscent of \(\Lambda\)–Wright–Fisher dynamics, in which
reproduction events occur in continuous time at Poisson points \((t,r)\):
with probability \(x\), a type-\(0\) individual replaces a random fraction \(r\)
of the population by its offspring, and with probability \(1-x\), a type-\(1\)
individual does so. Our discrete-time model can thus be viewed as a simplified
analogue in which reproduction events occur at regular time steps and a random
fraction \(\delta U\) of the population is replaced at each update (see
\cite{Bertoin2003, Bertoin2005}). We return to this connection in
Section~\ref{ss:pp_repres}.

\begin{figure}[t!]
\centering
\begin{tikzpicture}[scale=0.9]  
    \draw[color=black, fill = Cyan] (3, 5) rectangle (6,5.5) node [pos=.5] {$x$};
    \draw[color=black, fill =black!20!white] (10,5) rectangle (6,5.5) node [pos=.5] {$1-x$};
    
    \draw [thick, ->] (6,4.9) -- (2,3.1) node[midway,above left] {$p$};
    \draw [thick, ->] (7,4.9) -- (11,3.1) node[midway,above right] {$1-p$};
    
    \draw[color=black, fill = Cyan] (-1,2.5) rectangle (2,3) node [pos=.5] {$x$};
    \draw[color=black, fill = Cyan] (4,2.5) rectangle (2,3) node [pos=.5] {$\scriptstyle \delta U (1-x)$};
    \draw[color=black, fill = black!20!white] (6,2.5) rectangle (4,3) node [pos=.5] {$\scriptstyle (1-x)(1-\delta U)$};
    
    \draw[color=black, fill = Cyan] (7,2.5) rectangle (8.5,3) node [pos=.5] {$\scriptstyle x(1-\delta U)$};
    \draw[color=black, fill = black!20!white] (10,2.5) rectangle (8.5,3) node [pos=.5] {$\scriptstyle \delta U x$};
    \draw[color=black, fill = black!20!white] (14,2.5) rectangle (10,3) node [pos=.5] {$1-x$};
  \end{tikzpicture}
  \caption{The action of the Wright--Fisher kernel $P_{\delta, p}$.}
\label{fig:transport}\end{figure}
\subsubsection{Ergodic properties of the Wright--Fisher kernels}
In Section~\ref{ss3.1}, we establish key properties of the Wright--Fisher kernels, culminating in the following result.
\begin{theorem}[Ergodicity of Wright--Fisher kernels]\label{cor:unique}
	For any $\delta \in (0,1]$ and $\p \in [0,1]$, the kernel $P_{\delta, \p}$ admits a unique invariant distribution, denoted by $\beta_{\delta, \p}$. Moreover, for any $\mu\in\mathcal{P}$, $\mu (P_{\delta,p})^n$ converges in distribution as $n \to \infty$ to $\beta_{\delta, \p}$, and the rate of convergence is given by  
	\begin{align}\label{eq:convergence_rate0}
		\Was (\mu (P_{\delta,p})^n, \beta_{\delta, \p}) 
		\le  \bigg(1 - \frac{\delta}{2}\bigg)^{\!n} 
		\Was (\mu, \beta_{\delta, \p}).
	\end{align}   
\end{theorem}
Further properties of $\beta_{\delta,\p}$, for $\delta \in (0,1]$ and $\p \in [0,1]$, are summarized in Proposition~\ref{prop:resumen}. The proof of Theorem~\ref{cor:unique} relies on a coupling argument comparing the one-step transitions of the kernels $P_{\delta,\p}$ and $P_{\delta,q}$, with $\p,q \in [0,1]$, starting from different initial distributions. This coupling, along with several others used throughout the paper, is based on Lemma~\ref{uple}, which collects fundamental properties of two families of functions encoding the transitions of our Wright--Fisher kernels.

\subsubsection{The single host level}
In Section~\ref{section:host_level}, we focus on the one-level dynamics, i.e. the
evolution of pathogen alleles within a single host, ignoring interactions
between hosts. The model is specified by a constant $\delta \in (0,1]$ and a
function $\pf : [0,1] \to [0,1]$. Given these parameters, the fraction of
allele-$0$ pathogens evolves according to the transition kernel
$x \mapsto P_{\delta,\pf(x)}(x,\cdot)$; we denote the resulting Markov chain by
$Z^{(\delta,\pf)} \coloneqq (Z^{(\delta,\pf)}_n)_{n \in \N}$.

The function $\pf$ encodes the within-host bias in reproduction, which may either favor or disfavor
allele~$0$, and may depend on its current frequency $x$. This allows
within-host selective effects or replication rates to vary with the local
composition of the pathogen population, while the random replacement mechanism
(through $\delta$ and $U$) preserves a Wright--Fisher-type stochastic structure.

After establishing general properties of $Z^{(\delta,\pf)}$, we examine in
detail the example $\pf = \pf^{\downarrow}_\delta$, where
\[
\pf^{\downarrow}_\delta(x)
 \coloneqq x + \frac{2\delta}{3}\!\left(
   -x(1-x)\sum_{i=0}^\infty \sigma_i x^i
   + \theta_0(1-x) - \theta_1 x
 \right),
\]
with $\delta>0$, $\theta_0,\theta_1 \ge 0$, and 
$(\sigma_k)_{k \ge 0}$ a sequence of nonnegative coefficients satisfying the 
additional conditions in Assumption~\ref{eq:non_increasing}.  
The neutral case $\pf^\downarrow(x)=x$ for all $x\in[0,1]$ corresponds to 
$\sigma_i = \theta_0 = \theta_1 = 0$ for all $i \ge 0$. In the general case, the deviation from neutrality is modulated by $\delta$ and 
arises from two contributions: (i) a frequency-dependent selection term 
$s(x) = -x(1-x)\sum_{i\ge0} \sigma_i x^i$, which acts against type~$0$, and 
(ii) a mutation term $\vartheta(x) = \theta_0(1-x) - \theta_1 x$.  
The condition imposed on $(\sigma_i)_{i\ge0}$ corresponds to the 
\emph{fittest-type-wins} mechanism of~\cite{GS18}, and is precisely what allows 
us to show that $Z^{(\delta,\pf^\downarrow_\delta)}$ admits a moment dual 
$M^{(\delta,\pf^\downarrow_\delta)} = (M^{(\delta,\pf^\downarrow_\delta)}_n)_{n\in\N}$, 
with transition rates given in~\eqref{ratedual}.  The precise statement is:
\begin{theorem}[Moment duality]\label{th:moment_duality_II}
For every $n,m \ge 0$ and $z \in [0,1]$,
\begin{equation}\label{momdi2}
    \E_z\!\left[\!\left(Z^{(\delta,\pf^{\downarrow}_\delta)}_n\right)^{\!m}\right]
    = \E_m\!\left[z^{M_n^{(\delta,\pf^{\downarrow}_\delta)}}\right].
\end{equation}
\end{theorem}
In Section~\ref{sec:appli}, we apply Theorem~\ref{th:moment_duality_II} to determine the long-term behavior of the chain $Z^{(\delta,\pf)}$ in two settings: 
(i) the case without mutation (see Proposition~\ref{pr-abs}), and 
(ii) the case with mutation but without selection (see Proposition~\ref{nomut}).

Section~\ref{section:host_level} concludes by considering the scaling limit of the chain $Z^{(\delta_N,\pfn)}$, with 
\begin{align} \label{eq:f_N}
\delta_N \coloneqq \sqrt{\frac3N}
\quad \text{and} \quad 
\pfn(x) \coloneqq x + \frac{2}{\sqrt{3N}}\,b(x), \quad x \in [0,1],
\end{align}
where the function $b$ satisfies the following assumption.
\begin{assumption}\label{eq:lip_bound}
The function $b : [0,1] \mapsto \R$ is $L_1$-Lipschitz on $[0,1]$ and satisfies 
$b(0) \ge 0$ and $b(1) \le 0$.
\end{assumption}
We note that, under Assumption~\ref{eq:lip_bound}, there exists $N_0 \coloneqq N_0(L_1) \in \N$ such that $\pfn(x) \in [0,1]$ for all $x \in [0,1]$ and all $N \ge N_0$. We therefore assume in all what follows that $N \ge N_0$.
The scaling in~$\delta_N$ and~$\pfn$ corresponds to a weak selection and small random replacement regime. The next result shows that under this assumption, the discrete Wright--Fisher-type dynamics converge to the classical Wright--Fisher diffusion with drift~$b$. 
\begin{theorem}[Diffusion limit]\label{thm:connection-WF}
Assume that $b : [0,1] \mapsto \R$ satisfies Assumption~\ref{eq:lip_bound}. Let 
$X^{(N)}_{t} \coloneqq Z^{(\delta_N,\pfn)}_{\lfloor Nt \rfloor}$, $t \ge 0$, and assume that $X_0^{(N)}$ converges in distribution to $x_0 \in [0,1]$. Then the process $X^{(N)}$ converges in distribution as $N \to \infty$ to the pathwise unique strong solution of the SDE 
\begin{equation}\label{eq:WF-Phi-Psi}
\tag{WF}
X_t = X_0 + \int_0^t b(X_s)\,\dd s + \int_0^t \sqrt{X_s(1-X_s)}\,\dd W_s,
\end{equation}
with $X_0 = x_0$, where $W$ is a standard Brownian motion.
\end{theorem}
\begin{remark}
Assumption~\ref{eq:lip_bound} also implies that the solution of the SDE~\eqref{eq:WF-Phi-Psi} with $X_0 = x_0 \in [0,1]$ remains in $[0,1]$ for all times.
\end{remark}
Further connections to classical population genetics will be discussed throughout Section \ref{section:host_level}.
\subsubsection{Non-linear chains and McKean--Vlasov limits}
In Section~\ref{section:deme_level}, we set up the framework needed to later analyze the second-level dynamics. As discussed in the introduction, the feedback mechanisms act at the level of the entire community and then influence within-host selection uniformly across hosts. When focusing on a single host, this results in a reproduction bias that depends on its own state together with the distribution of states across all hosts. As an intermediate step toward the full multi-host model, we therefore introduce a class of non-linear Markov chains whose transitions depend not only on the current state but also on the current law of the process itself.

The parameters of the non-linear dynamics are a constant $\delta \in (0,1)$ and a function $\nl \pf : [0,1] \times \mathcal{P} \to [0,1]$. The non-linear Wright--Fisher Markov chain ${\nl Z}^{(\delta,\nl\pf)} \coloneqq ({\nl Z}^{(\delta,\nl\pf)}_n)_{n \in \N}$ is defined by specifying an initial distribution and the following updating scheme. If ${\nl Z}_n^{(\delta,\nl\pf)} \sim \nu_n$, then 
\begin{equation}\label{nl-trans}
{\nl Z}_{n+1}^{(\delta,\nl\pf)} \sim P_{\delta, \nl\pf({\nl Z}_n^{(\delta,\nl\pf)}, \nu_n)}({\nl Z}_n^{(\delta,\nl\pf)}, \cdot).
\end{equation}
To establish our next result, we introduce the following notion.
\begin{definition}[$(L_1,L_2)$-Lipschitz]\label{ass:Lipschitz} 
Let $L_1,L_2 > 0$ and $\NLpf : [0,1] \times \mathcal{P} \mapsto [0,1]$. We say that $\NLpf$ is $(L_1,L_2)$-Lipschitz if for every $x,y \in [0,1]$ and $\mu,\nu \in \mathcal{P}$,
	\begin{align*}
		\vert \NLpf (x, \mu) - \NLpf(y, \nu) \vert 
		\le L_1 \vert x - y \vert + L_2 \Was (\mu, \nu). 
	\end{align*}
\end{definition}
The next result shows the existence of and convergence to the unique invariant distribution for the non-linear chain ${\nl Z}^{(\delta,\nl\pf)}$.
\begin{theorem}[Ergodicity of the non-linear dynamics] \label{th:non_linear_ergodic}
	Let $\NLpf : [0,1] \times \mathcal{P} \mapsto [0,1]$ be a $(L_1,L_2)$-Lipschitz function with $L_1 + L_2 < 1$. Then the non-linear Wright--Fisher Markov chain $(\nl Z^{(\NLpf,\delta)}_n)_{n \ge 0}$ admits a unique invariant distribution $\eta \in \Ps$. Moreover, $\nl Z^{(\NLpf,\delta)}_n$ converges in distribution as $n \to \infty$ to $\eta$, and the rate of convergence is given by  
	\begin{align}\label{eq:convergence_rate}
		\Was (\Law(\nl Z^{(\NLpf,\delta)}_n), \eta) 
		\le  \Bigg(1 - \frac{\delta (1 - L_1 - L_2)}{2}\Bigg)^{\!n} 
		\Was (\Law (\nl Z^{(\NLpf,\delta)}_0), \eta).
	\end{align} 
	Furthermore, in the particular case where $\NLpf(x, \mu)$ does not depend on $x$, we have $\eta = \beta_{\delta,\bmath{\bar{s}}}$, where $\bmath{\bar{s}}$ is the unique solution of the equation $\NLpf(0, \beta_{\delta,s}) = s$ (recall the definition of $\beta_{\delta,s}$ in Theorem~\ref{cor:unique}).
\end{theorem}
Next, we consider scaling limits of these non-linear chains. For this, set
\begin{equation}\label{scale-setting}
\delta_N \coloneqq \sqrt{\frac3N}
\quad \text{and} \quad
{\nl \pf}_N (x,\mu) \coloneqq x + \frac{2}{\sqrt{3N}}\,{\NLb}(x,\mu), 
\quad x \in [0,1],\, \mu \in \mathcal{P},
\end{equation}
where the function $\NLb : [0,1] \times \mathcal{P} \mapsto \R$ satisfies the following.
\begin{assumption}\label{ass:on_b}
$\NLb$ is $(L_1,L_2)$-Lipschitz, and for all $\mu \in \mathcal{P}([0,1])$, 
$\NLb(0,\mu) \ge 0$ and $\NLb(1,\mu) \le 0$.
\end{assumption} 
Note that, under Assumption~\ref{ass:on_b}, there exists $N_1 \coloneqq N_1(L_1,L_2) \in \N$ such that ${\nl\pf}_N(x) \in [0,1]$ for all $x \in [0,1]$ and all $N \ge N_1$. We therefore assume in all what follows that $N \ge N_1$. The following result formalizes the scaling limit of the non-linear Wright--Fisher chains.

\begin{theorem}[McKean--Vlasov limit]\label{th:convergence_theorem}
Assume that Assumption~\ref{ass:on_b} holds and that the McKean--Vlasov Wright--Fisher (MVWF) stochastic differential equation 
\begin{align}\label{eq:MV}
\tag{MVWF}
 \dd {\nl X}_t = \NLb({\nl X}_t,\nl{\mu}_t)\,\dd t 
 + \sqrt{{\nl X}_t(1 - {\nl X}_t)}\,\dd W_t, 
 \quad \nl{\mu}_t = \Law({\nl X}_t), \quad {\nl X}_0 \sim \overline \mu_0,
\end{align}
has a unique weak solution $\nl X$ on $[0,\infty)$ (see Definition~\ref{def:weak_solution}). Let ${\nl X}^{(N)}_{t} \coloneqq {\nl Z}^{(\delta_N,{\nl\pf}_N)}_{\lfloor Nt \rfloor}$, $t \ge 0$, and assume that ${\nl X}^{(N)}_0 \to \mu_0 \in \mathcal{P}$. Then the sequence $(\sNLchainN)_{N \ge 1}$ converges in distribution to the unique solution of the McKean--Vlasov Wright--Fisher diffusion $\nl X$.
\end{theorem}

Although these non-linear chains can be interpreted as a more elaborate version of the one-level dynamics, they arise naturally as approximations to the within-host evolution when hosts interact through population-level processes such as infection or migration. The resulting non-linearity thus encodes the feedback between individual-level dynamics and population-level composition.
\subsubsection{The deme level}

In Section~\ref{sec:prop_chaos_results}, we set up the second-level dynamics. We now assume that there are $M$ hosts interacting within a deme; the interaction between hosts occurs only through their empirical measure. For instance, as illustrated in the introduction, global environmental adjustments may be triggered by the empirical distribution of pathogen types across all hosts; these adjustments then modify within-host selection uniformly throughout the community. This creates a feedback loop in which individual-level reproduction biases depend on the collective state of the population. 

The parameters of the model are, as for the non-linear dynamics, a constant $\delta \in (0,1)$ and a function $\nl \pf : [0,1] \times \mathcal{P} \to [0,1]$. More precisely, we define $\Zbo^{(\delta,\nl\pf,M)}_n = (Z^{(\delta,\nl\pf,M)}_{n,1}, \ldots, Z^{(\delta,\nl\pf,M)}_{n,M})$, where $Z^{(\delta,\nl\pf,M)}_{n,i}$ denotes the fraction of allele-$0$ pathogens in the $i$-th host, $i \in \{1, \ldots, M\}$, at generation~$n$. The process $\Zbo^{(\delta,\nl\pf,M)} \coloneqq (\Zbo^{(\delta,\nl\pf,M)}_n)_{n \in \N}$ evolves according to the following Markovian transitions. If $\Zbo^{(\delta,\nl\pf,M)}_n = {\bf{z}} = (z_1, \ldots, z_M)$ and ${\hat{\mu}_M}({\bf{z}}) \coloneqq \frac{1}{M}\sum_{i=1}^M \delta_{z_i}$, then
$$
\Zbo^{(\delta,\nl\pf,M)}_{n+1} \sim 
P_{\delta, \NLpf(z_1,\hat{\mu}({\bf{z}}))}(z_1,\cdot) \otimes \cdots \otimes 
P_{\delta, \NLpf(z_M,\hat{\mu}({\bf{z}}))}(z_M,\cdot).
$$

The main results of this section come in the form of uniform-in-time propagation of chaos results. The first one compares the laws of $Z^{(\delta,\nl\pf,M)}_{n,1}$ (i.e., the fraction of allele-$0$ pathogens in the first host in the two-level model with $M$ hosts) and ${\nl Z}^{(\delta,\nl\pf)}_n$ (the fraction of allele-$0$ pathogens in the corresponding non-linear model). 

\begin{theorem}[Propagation of chaos I]\label{propchaos}
Let $\NLpf:[0,1]\times\Ps\to[0,1]$ be a $(L_1,L_2)$-Lipschitz function with $L_1+L_2<1$ and let $\mu_0\in\Ps$. Consider the Markov chain $\Zbo^{(\delta,\nl\pf,M)}$ with initial condition $\Zbo^{(\delta,\nl\pf,M)}_0\sim\mu_0^{\otimes M}$ and the non-linear chain $\nl Z^{(\delta,\NLpf)}$ defined via~\eqref{nl-trans} and initial distribution $\mu_0$. Then, there exists $C>0$ (which may depend on $L_1$ and $L_2$, but not on $\delta$) such that, for every $n \ge 0$,
	\begin{align*}
		\Was\Big(\Law (Z^{(\delta,\nl\pf,M)}_{n,1}), \Law (\nl Z^{(\delta,\NLpf)}_n )\Big)
		\le \dfrac{C}{\sqrt{M}}.
	\end{align*}
\end{theorem}

Next, we introduce a scaling parameter $N\in\N$, and let the number of host particles $M(N)$ depend on it. We consider $\delta_N$ and $\NLpfn$ as in~\eqref{scale-setting} and set
\begin{align}\label{def:systems}
\sPSsM_t \coloneqq  \psdN{M(N)}_{\lfloor Nt\rfloor}
\quad \text{and} \quad
\sNLchainN_t \coloneqq \nlZn_{\lfloor Nt\rfloor}, \quad t \ge 0.
\end{align}
In this setting, we show the following propagation of chaos result comparing the laws of $\sPSsM_{t,i}$ and $\nl X_t^{(N)}$.

\begin{theorem}[Propagation of chaos II]\label{th:prop_chaos_2}
Assume that $\delta_N$ and $\NLpfn$ are as in~\eqref{scale-setting}
for a function $\NLb$ satisfying Assumption \ref{ass:on_b}.  
Let $\mu_0^{(N)}$ denote the law of $\sNLchainN_0$ and suppose that  
$\sPSsM_0\sim (\mu_0^{(N)})^{\otimes M(N)}$.  
Then, for all $N$ sufficiently large, we have
\begin{align*}
\Was\big(\Law(\sPSsM_{t,1}), \Law(\nl X^{(N)}_t)\big)
\le \dfrac{C_0L_2}{(L_1+L_2)\sqrt{M(N)}} 
\left(e^{(L_1+L_2)t}-1\right),
\end{align*}
where $C_0>0$ is a constant.
\end{theorem}
If the number $M(N)$ of hosts grows sufficiently fast, and $\NLpf_N$ satisfies suitable additional assumptions, we obtain a uniform-in-time version of the previous result; see Proposition~\ref{prop:uniform_prop_chaos}.
\subsubsection{A case study: self-stabilizing MVWF diffusion}
We conclude with a case study that illustrates the general results by considering a McKean--Vlasov Wright--Fisher diffusion whose drift incorporates both mutation and self-stabilizing effects. This model exemplifies how feedback mechanisms at the population level can induce additional stabilizing forces in the allele-frequency dynamics.

Specifically, we study the McKean--Vlasov model associated with the drift
$$
\nl{\bb}(x,\mu) \coloneqq -\theta_1 x + \theta_0(1-x) - \gamma \left(x - \int_{[0,1]} y\,\mu(\dd y)\right),
$$
where the parameter $\theta_1>0$ (resp.\ $\theta_0>0$) models mutations to type~$1$ (resp.\ type~$0$). The term $x - \int_{[0,1]} y\,\mu(\dd y)$ represents a self-stabilizing effect, and $\gamma>0$ determines its strength. Intuitively, when the state $x$ of an individual deviates from the population mean, the drift term drives it back toward that mean, thereby stabilizing the overall composition.  

The corresponding McKean--Vlasov Wright--Fisher SDE is given by
\begin{equation}\label{eq:wf_mut}
\begin{aligned}
\dd\self_t &= \big(-\theta_1 \self_t + \theta_0(1-\self_t) - \gamma(\self_t-\E[\self_t])\big)\,\dd t  
  + \sqrt{\self_t (1-\self_t)}\,\dd W_t,\\
\self_0 &\sim \overline \mu_0,
\end{aligned}
\end{equation}
where $W$ is a standard Brownian motion.  

Using the Wright--Fisher kernels, the moment estimates from Section~\ref{section:host_level}, and the convergence results from Section~\ref{section:deme_level}, we obtain the following quantitative ergodicity estimate.

\begin{theorem}[Ergodicity]\label{prop:invariant-self}
The process $\self$, solution to~\eqref{eq:wf_mut}, admits a unique stationary distribution $\eta$, which is
the $\betadist \big( 2 \theta_0 (1+ \gamma/(\theta_1+ \theta_0)),\, 2\theta_1 (1+ \gamma/(\theta_1+\theta_0)) \big)$ distribution. Moreover,
\begin{align*}
\Was (\Law (\self_t), \eta) 
\le e^{-(\theta_1+\theta_0)t} 
\,\Was (\Law (\self_0), \eta).
\end{align*}
\end{theorem}
Note that the rate of convergence in Theorem \ref{prop:invariant-self} does not depend on $\gamma$. 

We close Section~\ref{sec:case_study} with the following result, which shows that any individual particle of the Markovian system $\sPSsM$ approximates the dynamics of the McKean--Vlasov Wright--Fisher diffusion~\eqref{eq:wf_mut}.

\begin{theorem}[Propagation of chaos, from discrete-time Markov chains to non-linear diffusions]\label{prop_chaos_sswf}
Let $M(N) = N$, $\delta_N = \sqrt{3/N}$, and $\NLpf_N$ as in Theorem~\ref{th:prop_chaos_2} with our choice of $\nl{\bb}$. Consider the particle system $(\sPSsM_t)_{t \ge 0}$ as in display~\eqref{def:systems}. Then
\begin{align*}
\lim_{N \to \infty} \sup_{t \ge 0} 
\Was \big( \Law(\sPSsM_{t,1}), \Law(\self_t) \big) = 0.
\end{align*}
\end{theorem}
\begin{figure}[ht!]
\centering
\includegraphics[scale=0.5]{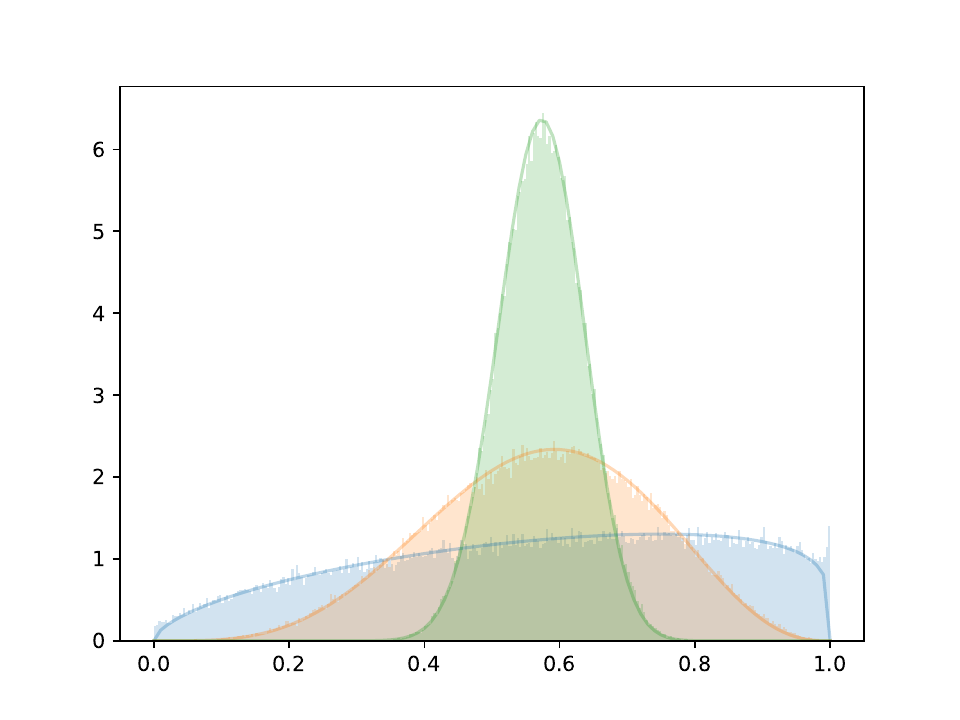}
\caption{Invariant densities for the self-stabilizing MVWF model ($\theta_0=0.8$, $\theta_1=0.6$) for several values of $\gamma$: $\gamma=0$ (light blue), $\gamma=3$ (pink), and $\gamma=30$ (green). 
Darker curves show the theoretical invariant densities. 
Histograms are based on $10^5$ simulations of the rescaled Markovian particle system with scale parameter $N=600$, started from i.i.d.\ uniform initial conditions and sampled at time $T=10$.}
\label{fig:fig2}
\end{figure}
 
\section{Ergodic properties of the Wright--Fisher kernels}\label{ss3.1}

In this section, we establish ergodic properties of the kernels $P_{\delta,p}$ for $\delta, p \in [0,1]$. In particular, we prove Theorem~\ref{cor:unique}, which addresses the case $\delta \in (0,1]$. Note that the kernel $P_{0,p}$ is the identity for every $p \in [0,1]$. A more interesting case is $\delta = 1$, studied in \cite{videla2023}. There it was shown that for $p \in (0,1)$, if $\beta_p$ denotes the law of a $\betadist(p,1-p)$ random variable, then $\beta_p P_{1,p} = \beta_p$, and for every $\nu \in \mathcal{P}$, the sequence $\nu P_{1,p}^n$ converges in total variation to $\beta_p$ as $n \to \infty$.

We next lay the groundwork for proving Proposition~\ref{cor:unique}. The results presented in the next subsection will be useful all along the paper.
\subsection{Updating functions}
As seen in Section \ref{sec:overview-results}, the transitions of the processes we will consider in this article are based on the Wright--Fisher kernels $(P_{\delta,p})_{\delta,p\in[0,1]}$.  
At various stages, we will compare their transitions for different initial values and/or parameters. The common element in all instances will be a coupling argument based on the \emph{updating functions} $\upx_{p,q,\delta}:[0,1]^4\to[0,1]$ and $\upy_{q,\delta}:[0,1]^3\to[0,1]$, $0\leq p\leq q\leq 1$, defined via 
  \begin{align}
    \upx_{p,q,\delta}(x,u,v,w) & \coloneqq
    \begin{cases*}
      x (1-\delta u), & \text{if $v \leq 1-q$ or $v>1-q$ and $w>p/q$,} \\
      x+ \delta u (1-x),        & \text{if $v>1-q$ and $w\leq p/q$,}
    \end{cases*} \label{upd1}\\
    \upy_{q,\delta}(y,u,v)&  \coloneqq
    \begin{cases*}
      y (1-\delta u), & \text{if $v \leq 1-q$,} \\
      y+ \delta u (1-y),       & \text{if $v>1-q$}.
    \end{cases*} \label{upd2}
  \end{align} 
 The following result summarizes some basic properties of these updating functions. 
\begin{lemma}\label{uple}
The families of updating functions $\{\upx_{p,q,\delta}:0\leq p\leq q\leq 1,0<\delta<1\}$ and $\{\upy_{q,\delta}:0\leq q\leq 1,0<\delta<1\}$ satisfy the following properties:
\begin{enumerate}
    \item For any $0\leq p\leq q\leq 1$, $0<\delta<1$, $u,v,w\in[0,1]$, and $0\leq x\leq y\leq 1$,  we have 
    \begin{equation}\label{eq:stochastic-orders-updates}
       \upx_{p,q,\delta}(x,u,v,w)\leq \upy_{q,\delta}(y,u,v). 
    \end{equation}
    \item For any $0\leq p\leq q\leq 1$, $0<\delta<1$, $u,v,w\in[0,1]$, and $0\leq x, y\leq 1$,  we have 
    \begin{equation}\label{eq:absolute-value-updates-difference}
        \begin{aligned}
            |\upx_{p,q,\delta}(x,u,v,w)- \upy_{q,\delta}(y,u,v)| 
            &\leq |x-y|(1-\delta u) +  \mathbf{1}_{\{v> 1-q\}} \mathbf{1}_{\{w> p/q\}}\delta u.
        \end{aligned}
    \end{equation}
    \item Let $U, U_1,U_2\sim\unif$ be independent. Then, for any $0\leq p\leq q\leq 1$, $0<\delta<1$, $u,v,w\in[0,1]$, and $0\leq x, y\leq 1$,  we have 
    \begin{equation}\label{eq:conditional-expected-value-difference-updates}
        \begin{aligned}
            \E\left[|\upx_{p,q,\delta}(x,U,U_1,U_2)- \upy_{q,\delta}(y,U,U_1)| \right]
                &\leq |x-y|\bigg(1-\frac{\delta}{2} \bigg) +  (q-p)\frac{\delta}{2}.
        \end{aligned}
    \end{equation}
    Moreover, if $X\sim \mu \in \mathcal{P}$ and $Y\sim \nu \in \mathcal{P}$, and $U, U_1, U_2$ are independent uniform random variables, also independent from $X$ and $Y$, we have: 
    \begin{equation}\label{eq:couplig-update-functions}
         \begin{aligned}
        X' &=\upx_{p,q,\delta}(X,U,U_1,U_2) \sim \mu P_{\delta,p},\qquad
        Y' =\upy_{q,\delta}(Y,U,U_1)\sim \nu P_{\delta,q}
    \end{aligned}
    \end{equation}
\end{enumerate}
\end{lemma}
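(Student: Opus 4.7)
The proof follows directly from the piecewise definitions of $\upx_{p,q,\delta}$ and $\upy_{q,\delta}$, so the plan is to organize everything around the natural partition of $[0,1]^2$ into the three events
\begin{equation*}
A_1 = \{v \leq 1-q\}, \quad A_2 = \{v > 1-q,\; w \leq p/q\}, \quad A_3 = \{v > 1-q,\; w > p/q\}.
\end{equation*}
On $A_1 \cup A_2$ both updating functions act by the same affine transformation (either $z \mapsto z(1-\delta u)$ on $A_1$ or $z \mapsto z + \delta u(1-z)$ on $A_2$), so the only genuinely mixed case is $A_3$, where $\upx_{p,q,\delta}$ performs the ``downward'' move $x(1-\delta u)$ while $\upy_{q,\delta}$ performs the ``upward'' move $y + \delta u(1-y)$.

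For part (1), I would check the three cases separately. On $A_1$ and on $A_2$ the inequality reduces to $x \leq y$ multiplied by the nonnegative factor $1-\delta u$. On $A_3$, I rewrite the right-hand side as $y(1-\delta u) + \delta u$ and use $x \leq y$ together with $\delta u \geq 0$. For part (2), the same trichotomy gives difference $|x-y|(1-\delta u)$ on $A_1$ and $A_2$, and on $A_3$ the triangle inequality yields $|x(1-\delta u) - y(1-\delta u) - \delta u| \leq |x-y|(1-\delta u) + \delta u$, which together compactly encode as the claimed bound with the indicator $\mathbf{1}_{A_3}$.

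For part (3), I would simply take expectations in \eqref{eq:absolute-value-updates-difference} using independence of $U, U_1, U_2$: the identities $\E[1-\delta U] = 1 - \delta/2$ and
\begin{equation*}
\E\bigl[\mathbf{1}_{\{U_1 > 1-q\}} \mathbf{1}_{\{U_2 > p/q\}} \delta U\bigr] = q \cdot \bigl(1 - p/q\bigr) \cdot \delta/2 = (q-p)\,\delta/2
\end{equation*}
deliver \eqref{eq:conditional-expected-value-difference-updates} at once. For the coupling statement \eqref{eq:couplig-update-functions}, it suffices to identify the conditional laws given $X$ and $Y$. For $Y' = \upy_{q,\delta}(Y,U,U_1)$, the event $\{U_1 > 1-q\}$ has probability $q$, producing a mixture that matches $P_{\delta,q}(Y,\cdot)$ by definition. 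For $X' = \upx_{p,q,\delta}(X,U,U_1,U_2)$, the event $A_2 = \{U_1 > 1-q,\, U_2 \leq p/q\}$ has probability $q \cdot (p/q) = p$, so the resulting two-component mixture coincides with $P_{\delta,p}(X,\cdot)$.

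None of the steps poses a real obstacle; the only subtle point is the coupling design itself, namely the observation that one can use the single pair $(U_1, U_2)$ to simultaneously decide both updates so that their marginals realize the two different Wright--Fisher kernels, while remaining as close as possible on $A_3$, which is exactly the set where they disagree and which has probability $q - p$. This is the mechanism responsible for the $(q-p)\delta/2$ term in \eqref{eq:conditional-expected-value-difference-updates} and will be used repeatedly in the ergodicity and propagation-of-chaos arguments later.
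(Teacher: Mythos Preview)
Your proof is correct and follows essentially the same approach as the paper. The only cosmetic difference is that the paper first rewrites both updating functions in the unified indicator form $\upx_{p,q,\delta}(x,u,v,w)=x(1-\delta u)+\mathbf{1}_{\{v>1-q\}}\mathbf{1}_{\{w\leq p/q\}}\delta u$ and $\upy_{q,\delta}(y,u,v)=y(1-\delta u)+\mathbf{1}_{\{v>1-q\}}\delta u$, from which the exact difference $(x-y)(1-\delta u)-\mathbf{1}_{\{v>1-q\}}\mathbf{1}_{\{w>p/q\}}\delta u$ drops out in one line, whereas you carry out the equivalent case analysis on $A_1,A_2,A_3$ explicitly; both routes yield the same bounds and the same coupling verification.
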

We will use Lemma \ref{uple} at various stages of the paper to couple processes constructed on the basis of the Wright--Fisher kernels (see Remarks \ref{stcoupling}, \ref{nlstcoupling} and \ref{app:particle-system-coupling}) . 
\begin{proof}
 Note that the update functions can be written as
 \begin{align*}
  \upx_{p,q,\delta}(x,u,v,w)
  &= x(1-\delta u)+\mathbf{1}_{\{v>1-q\}}\mathbf{1}_{\{w\leq p/q\}}\delta u \\
 \upy_{q,\delta}(y,u,v)
 &= y(1-\delta u)  +  \mathbf{1}_{\{v> 1-q\}}\delta u,
 \end{align*}
and hence,
 \begin{align*}
            \upx_{p,q,\delta}(x,u,v,w)- \upy_{q,\delta}(y,u,v) 
            &= (x-y)(1-\delta u)  -\mathbf{1}_{\{v> 1-q\}}\mathbf{1}_{\{w> p/q\}}\delta u.
        \end{align*}
Inequalities \eqref{eq:stochastic-orders-updates} and \eqref{eq:absolute-value-updates-difference} follow directly. Taking expectations in \eqref{eq:absolute-value-updates-difference} yields \eqref{eq:conditional-expected-value-difference-updates}. Finally, for any measurable and bounded function $h:[0,1]\to[0,1]$, we have
 \begin{align*}
     \E[h(X')]  &= \E\bigg[\E\Big[h\big( (\mathbf{1}_{\{U_1\leq 1-q\}}+ \mathbf{1}_{\{U_1> 1-q,\, U_2> p/q\}})X(1-\delta U)\\
     &\qquad\qquad\qquad\qquad\qquad\qquad\qquad\quad+  \mathbf{1}_{\{U_1> 1-q,\, U_2\leq p/q\}}(X+\delta U(1-X) )\big) \mid X\Big]\bigg]\\
                &= (1-q+q-p)\E\Big[\E\big[h(X(1-\delta U)\big)\mid X\big]+ q\frac{p}{q}\E\big[h\big(X+ \delta U(1-X)\big)\mid X\big]\Big]\\
                &= \E\left[P_{\delta,p}h(X)\right]=\mu P_{\delta,p}h.
 \end{align*}
 An analogous computation for $Y'$ shows $\E[h(Y')] = P_{\delta,q}h$, achieving the proof. 
\end{proof}

    To simplify the notation we will omit the subscript $\delta$ from the update functions $\upx_{p,q,\delta}$ and $\upy_{q,\delta}$ if the dependency is clear from the context.
\subsection{Proof of Theorem \ref{cor:unique} and properties of the invariant measures}
The proof of Theorem~\ref{cor:unique}, which establishes existence, uniqueness, and convergence to the stationary distribution $\beta_{\delta,p}$ of the kernel $P_{\delta,p}$, will be based on the following result.
\begin{proposition} \label{lemma:contraction}
The following properties hold.
	\begin{enumerate}
		\item For every $\mu, \nu$ in $\mathcal{P}$, $\p,q$ in $[0,1]$ and $\delta \in (0,1]$:
		\begin{align*}
			\Was (\mu P_{\delta, \p}, \nu P_{\delta, q}) \le \left (1-\dfrac{\delta}{2}\right ) \Was (\mu, \nu) + \dfrac{\delta}{2}\vert \p-q\vert.
		\end{align*}
		\item For every $\delta,p \in [0,1]$, the map $\mu\mapsto\mu P_{\delta, \p}$ is increasing with respect to the stochastic order $\domby$.
	\end{enumerate}
\end{proposition}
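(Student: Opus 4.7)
The plan is to deduce both statements directly from the coupling construction in Lemma~\ref{uple}; the substantive pointwise identities for the updating functions are already established there, so the argument reduces to selecting the right initial coupling in each case.

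For part~(1), I will first reduce to the case $p\le q$ by symmetry of the claimed inequality in $(p,q)$ and $(\mu,\nu)$. Let $(X,Y)$ be an optimal coupling of $\mu$ and $\nu$ (which exists because $[0,1]$ is compact and the cost $|\cdot-\cdot|$ is continuous), and let $U,U_1,U_2\sim\unif$ be independent and independent of $(X,Y)$. Setting $X'\coloneqq \upx_{p,q,\delta}(X,U,U_1,U_2)$ and $Y'\coloneqq \upy_{q,\delta}(Y,U,U_1)$, item~(3) of Lemma~\ref{uple} guarantees that $(X',Y')$ is a coupling of $\mu P_{\delta,p}$ with $\nu P_{\delta,q}$. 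Taking expectations in \eqref{eq:conditional-expected-value-difference-updates} (after conditioning on $(X,Y)$) will yield
\[
\Was(\mu P_{\delta,p},\nu P_{\delta,q})\le \E\bigl[|X'-Y'|\bigr]\le \bigl(1-\tfrac{\delta}{2}\bigr)\Was(\mu,\nu)+\tfrac{\delta}{2}(q-p),
\]
which is the desired contraction.

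For part~(2), I will use the coupling characterization of $\domby$ recalled in the preliminaries: given $\mu\domby\nu$, I can realize $X\sim\mu$ and $Y\sim\nu$ on a common probability space with $X\le Y$ almost surely. Specializing the construction above to $p=q$ (so that the indicator $\mathbf{1}_{\{w>p/q\}}$ appearing in $\upx_{p,p,\delta}$ vanishes identically, and the two updating functions effectively agree) and applying the pointwise monotonicity \eqref{eq:stochastic-orders-updates} to the pair $(X,Y)$, the updated variables $X'$ and $Y'$ will still satisfy $X'\le Y'$ almost surely; since their marginals are $\mu P_{\delta,p}$ and $\nu P_{\delta,p}$ respectively, this yields $\mu P_{\delta,p}\domby \nu P_{\delta,p}$.

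There is essentially no obstacle in the argument itself: the entire technical content—the explicit synchronous coupling of two Wright--Fisher kernels with possibly different drift parameters, realised as deterministic functions of a shared uniform noise, together with the resulting sharp pointwise control—has been absorbed into Lemma~\ref{uple}. The only minor point to verify is the attainability of an optimal Wasserstein coupling, which is automatic on the compact space $[0,1]$.
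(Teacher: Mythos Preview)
Your proposal is correct and follows essentially the same route as the paper's own proof: both reduce to $p\le q$, build the coupling $(X',Y')$ from an initial coupling of $(\mu,\nu)$ via the updating functions of Lemma~\ref{uple}, and then invoke \eqref{eq:conditional-expected-value-difference-updates} for part~(1) and \eqref{eq:stochastic-orders-updates} for part~(2). Your extra remarks on the existence of the optimal coupling and on the collapse $\upx_{p,p,\delta}=\upy_{p,\delta}$ are accurate but not strictly needed, since \eqref{eq:stochastic-orders-updates} already gives the ordered comparison directly.
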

\begin{proof}
	Let $\mu, \nu \in \mathcal{P}$ and assume without loss of generality that $0 \le \p \le q$. Let $(X_0, Y_0)$ be an arbitrary coupling of $\mu$ and $\nu$ defined on a probability space $(\Omega_0, \FF_0, \mathbb{P}_0)$. Consider independent $U, U_1, U_2\sim\unif$ on a different probability space $(\Omega_1, \FF_1, \mathcal{P}_1)$. Consider the probability space $(\Omega, \FF, \mathbb{P})\coloneqq(\Omega_0 \times \Omega_1, \FF_0 \otimes \FF_1, \mathbb{P}_0 \otimes \mathbb{P}_1)$. We denote by $\E_0, \E_1$ and $\E$ the expectation operators with respect to $\Pb_0$, $\Pb_1$ and $\Pb$, respectively. Define $(X_1, Y_1)$ via
  \begin{equation*}
    X_1 =\upx_{p,q}(X_0,U,U_1,U_2)\quad\text{and}\quad Y_1=\upy_q(Y_0,U,U_1).
  \end{equation*}
From \eqref{eq:couplig-update-functions}, $X_1 \sim \mu P_{\delta, \p}$ and $Y_1 \sim \nu P_{\delta, q}$. Hence applying the bound \eqref{eq:conditional-expected-value-difference-updates} we obtain:
	\begin{align*}
	    \E [\vert X_1-Y_1\vert ] &\leq  
            \bigg (1- \frac{\delta}{2}\bigg )\E_0 [\vert X_0-Y_0\vert] + \left(q-p\right)\frac{\delta}{2}.
	\end{align*}
The first part follows by choosing a coupling $(X_0, Y_0)$ such that $\E_0 (\vert X_0-Y_0\vert)=\Was(\mu, \nu)$.
	
Notice that if $\mu \domby \nu$, one may choose $(X_0, Y_0)$ such that $X_0 \leq Y_0$ almost surely. The second assertion then follows directly from inequality~\eqref{eq:stochastic-orders-updates} in Lemma~\ref{uple}. \end{proof}
We now use the previous result to prove Theorem~\ref{cor:unique}.
\begin{proof}[Proof of Theorem \ref{cor:unique}]
According to the first point of Proposition~\ref{lemma:contraction}, the map $\mu \mapsto \mu P_{\delta,\p}$ is a strict $\Was$-contraction. Since $(\mathcal{P}, \Was)$ is a Polish space, Banach’s fixed point theorem ensures that this map admits a unique fixed point, which yields the existence and uniqueness of the stationary distribution. Applying again the first part of Proposition~\ref{lemma:contraction} with $q=p$, $\nu=\beta_{\delta,p}$, and $\mu (P_{\delta,p})^{n-1}$ in place of $\mu$, we obtain
\begin{align*}
\Was \bigl(\mu (P_{\delta,\p})^n, \beta_{\delta,p}\bigr) 
    \le \left(1-\frac{\delta}{2}\right) 
        \Was \bigl(\mu (P_{\delta,\p})^{n-1}, \beta_{\delta,p}\bigr).
\end{align*}
The result follows by iterating this inequality.
\end{proof}
We establish in Appendix~\ref{subsection:auxiliary_results} additional properties of the invariant distributions $\beta_{\delta,p}$, for $\delta\in(0,1]$ and $p\in[0,1]$, including absolute continuity with respect to the Lebesgue measure, recursive relations for their moments, and integral equations for their densities.
\section{The single host level: duality and scaling limits} 
This section is devoted to the proof of Theorem~\ref{th:moment_duality_II} and Theorem~\ref{thm:connection-WF}, as well as to the presentation of additional results of independent interest that shed light on the dynamics of Markov chains driven by Wright--Fisher kernels, in particular on their long-term behavior.

\label{section:host_level}
\subsection{Some aspects of the dynamics}
Fix $\delta\in(0,1)$ and a measurable function $\pf: [0,1] \mapsto [0,1]$. Consider a two-types ($0$ and $1$) infinite population evolving in discrete generations. At each generation a fraction $\delta U$, with $U$ being uniformly distributed in $[0,1]$, of the population dies and is replaced by the offspring of an individual. The latter is chosen uniformly at random from the type-$0$ (resp. type-$1$) sub-population with probability $\pf(x)$ (resp. $1-\pf(x)$), where $x$ denotes the current fraction of type $0$ individuals. Let $Z^{(\delta,\pf)}_n$ denote the fraction of type-$0$ individuals in the population at generation $n$. The process $Z^{(\delta,\pf)}\coloneqq(Z^{(\delta,\pf)}_n)_{n \ge 0}$ is a Markov chain with transition  kernel $(x, A) \mapsto P_{\delta, \pf(x)} (x, A)$. Since $\delta$ remains fix until Section \ref{sec:connection-WF}, we will write until then $Z^{(\pf)}$ instead of $Z^{(\delta,\pf)}$. In contrast, in Section \ref{sec:connection-WF} it will be important to make the dependency in $\delta$ explicit.

An important example arises when $\pf$ is the identity function, 
which we refer to as the \emph{neutral case}. In contrast, a general function $\pf$ introduces, at each generation, 
a bias toward one of the two types, depending on the current type composition. 
The next proposition characterizes the neutral case together with two extreme scenarios 
in which one of the types is always favored. In the latter situation, we refer to the advantaged type as the \emph{fit} type.

\begin{proposition}\label{lemma:smartingales}
Assume that $\pf(x) \le x$ (resp. $\pf(x)\ge x$) for all $x\in[0,1]$. Then the Markov chain $Z^{(\pf)}$ driven by $P_{\pf}$ is a bounded supermartingale (resp. a bounded submartingale). In particular, if $\pf$ is the identity function, $Z^{(\pf)}$ is a martingale.
\end{proposition}

\begin{proof}
	By construction $Z_n^{(\pf)}\in[0,1]$ for all $n\geq 0$ and
	\begin{align*}
\E\!\left[ Z_{n+1}^{(\pf)} \mid Z_n^{(\pf)} \right]
&= \pf\!\left(Z_n^{(\pf)}\right)
    \Bigg( Z_n^{(\pf)} + \big(1 - Z_n^{(\pf)}\big)\frac{\delta}{2} \Bigg)
   + \big(1 - \pf(Z_n^{(\pf)})\big)
     \Bigg( Z_n^{(\pf)} \Big(1 - \frac{\delta}{2}\Big) \Bigg)\\
&= \pf\!\left(Z_n^{(\pf)}\right)
    \Bigg( Z_n^{(\pf)} + (1 - Z_n^{(\pf)})\frac{\delta}{2}
          - Z_n^{(\pf)} + Z_n^{(\pf)}\frac{\delta}{2} \Bigg)
   + Z_n^{(\pf)} \Big(1 - \frac{\delta}{2}\Big)\\
&= \big(\pf(Z_n^{(\pf)}) - Z_n^{(\pf)}\big)\,\frac{\delta}{2}
   + Z_n^{(\pf)}.
\end{align*}
and the result follows.
\end{proof}
In the setting of Proposition~\ref{lemma:smartingales}, we conclude that $Z_n^{(\pf)}$ converges almost surely, as $n \to \infty$, to a random variable $Z_\infty$. The next result provides a necessary condition for this limit to take values only in $\{0,1\}$, i.e. for type~$0$ to eventually fixate or become extinct.
\begin{proposition}\label{lemma:aslimit}
Let $\pf$ be a continuous function such that $\pf(x)=0 \Leftrightarrow x=0$ and $\pf(x)=1 \Leftrightarrow x=1$. Assume also that for some initial condition $x \in [0,1]$, the Markov chain $Z^{(\pf)}$ driven by $P_{\pf}$ has an a.s. limit $Z_\infty$. Then $Z_\infty \in \{0,1\}$ a.s. 
\end{proposition}
\begin{proof}
    For every $n \ge 2$, let $I_n\coloneqq[1/n, 1-1/n]$. Clearly,
\begin{align*}
\{Z_\infty \in (0,1)\} 
= \bigcup_{n \ge 2} \{Z_\infty \in I_n\} 
\subseteq \bigcup_{n \ge 2} \bigcup_{m \ge 1} \bigcap_{k \ge m} \{Z_k^{(\pf)} \in I_n\}.
\end{align*}
Define $A^{(n)}_k \coloneqq \{Z_k^{(\pf)} \in I_n\}$ for $n\geq 2$.
It suffices to show that for each $n\geq 2$, we have $\prob [\liminf_k A^{(n)}_k]=0$,
to achieve the proof.

We claim that, whenever $Z^{(\pf)}_0 \in I_n$, the chain $Z^{(\pf)}$ has probability at least $r_n$, for some $r_n>0$, of exiting $I_n$ through its left endpoint within a fix number of steps (possibly depending on $n$). To see this, let $L^{(n)}$ be the minimal integer $L$ such that
\[
\Big(1 - \frac{\delta}{2}\Big)^{L}\Big(1 - \frac{1}{n}\Big) < \frac{1}{n}.
\]
Further, define $q_n \coloneqq 1 - \sup\{\pf(x) : x \in I_n\}$. Note that $q_n > 0$ by the continuity of $\pf$.

Note that, if $(U_i)_{i=1}^{L^{(n)}}$ and $(V_i)_{i=1}^{L^{(n)}}$ are independent families of independent uniform random variables on $[0,1]$, we can construct $Z^{(\pf)}$ iteratively by setting
\begin{align*}
Z^{(\pf)}_{i+1}=\upy_{\pf(Z_i)}(Z^{(\pf)}_i,U_{i+1},V_{i+1}),\qquad i<L^{(n)}.
\end{align*}
Conditional on $Z^{(\pf)}_{0} \in I_n$, on the event $E_n \coloneqq \{\, U_i \ge 1/2,\; V_i > q_n,\; i = 1,\ldots, L^{(n)} \,\}$, the chain exits the interval $I_n$ via its left endpoint at or before time $L^{(n)}$. Moreover, $\mathbb{P}(E_n) = \big(2^{-1}(1-q_n)\big)^{L^{(n)}}\eqqcolon r_n > 0.$
Hence, by the Markov property, for any $m \ge 0$,
\begin{align} \label{eq:bound_markov}
\prob (Z^{(\pf)}_{m+i} \notin I_n \text{ for some } i=1, 2, \ldots, L^{(n)}\vert Z^{(\pf)}_m\in I_n) \ge r_n.  
\end{align}
Now, for every $m \ge 1$:
\begin{align*}
\bigcap_{k \ge m}\{Z_k^{(\pf)} \in I_n\} &= \bigcap_{k \ge 0} \bigcap_{i=0}^{L^{(n)}-1}\{Z_{m+kL^{(n)}+i}^{(\pf)} \in I_n\}.
\end{align*}
Set
\begin{align*}
B^{(n)}_k\coloneqq \bigcap_{i=0}^{L^{(n)}-1}\{Z_{m+kL^{(n)}+i}^{(\pf)} \in I_n\}.
\end{align*}
Using the Markov property, we obtain, for every $K \ge 1$, that
\begin{align*}
\prob \bigg [\bigcap_{k = 0}^{K} B^{(n)}_k\bigg ] &\le \prob (B^{(n)}_0)\prod_{k=1}^{K} \prob (B^{(n)}_k \vert B^{(n)}_{k-1})\le (1-r_n)^K,
\end{align*}
where in the last line we used \eqref{eq:bound_markov}. Letting $K\to\infty$, we conclude that $\prob [\liminf_k A^{(n)}_k]=0$ for every $n$, ending the proof.
\end{proof}
\begin{remark}
Note that if $\pf(x) \le x$ for all $x \in [0,1]$ or $\pf(x) \ge x$ for all $x \in [0,1]$, Proposition~\ref{lemma:smartingales} implies that $Z^{(\pf)}$ is either a bounded supermartingale or a bounded submartingale. In both cases, we may conclude that $Z_n^{(\pf)}$ converges almost surely and in $L^1$ to a random variable $Z_\infty$. If, in addition, $\pf$ is continuous and satisfies $\pf(x) = 0 \Leftrightarrow x = 0$ and $\pf(x) = 1 \Leftrightarrow x = 1$, then Proposition~\ref{lemma:aslimit} implies that $Z_\infty$ is a Bernoulli random variable. 
\end{remark}
We conclude this section with a result that allows us to compare Wright--Fisher chains associated with different functions $\pf$. Before stating it precisely, we present in the following remark a suitable coupling for this setting.

\begin{remark}[The standard coupling]\label{stcoupling}
Let $\pf,\qf:[0,1]\to[0,1]$ be measurable functions. Lemma \ref{uple} provides a natural way to couple the chains $Z^{(\pf)}$ and $Z^{(\qf)}$, provided that their initial values $Z^{(\pf)}_0$ and $Z^{(\qf)}_0$ are already coupled. To see this, assume that we have already coupled the chains up to generation $n$ and let $U,U_1,U_2\sim\unif$ be independent between them, and independent from the two processes up to generation $n$. Set $p\coloneqq\pf(Z^{(\pf)}_n)$ and $q\coloneqq\pf(Z^{(\qf)}_n)$ and we define
$$Z^{(\pf)}_{n+1}=\upx_{p,q}(Z^{(\pf)}_n,U,U_1,U_2)\quad\textrm{and}\quad Z^{(\qf)}_{n+1}=\upy_{q}(Z^{(\qf)}_n,U,U_1),$$
if $p\leq q$, or
$$Z^{(\qf)}_{n+1}=\upx_{q,p}(Z^{(\qf)}_n,U,U_1,U_2)\quad\textrm{and}\quad Z^{(\pf)}_{n+1}=\upy_{p}(Z^{(\pf)}_n,U,U_1),$$
otherwise. Thanks to part 3 of Lemma \ref{uple}, the chains $Z^{(\pf)}$ and $Z^{(\qf)}$ constructed by this procedure have the desired distributions and satisfy, for all $n\geq 0$, 
	\begin{align} \label{eq:pre_bound_00}
		\E[\vert Z^{(\pf)}_{n+1}-Z^{(\qf)}_{n+1}] & \le \Bigg (1-\dfrac{\delta}{2}\Bigg) \E[\vert Z^{(\pf)}_{n}-Z^{(\qf)}_{n} \vert ]+\dfrac{\delta}{2}\E[\vert \pf(Z^{(\pf)}_{n})-\qf( Z^{(\qf)}_{n})\vert].
	\end{align}
\end{remark}
We can now state the announced comparison result, which will prove useful in Section~\ref{sec:appli}.
\begin{lemma}\label{lemma:comparison_theorem}
Let $\pf, \qf: [0,1]\to[0,1]$ be two measurable functions. Assume that $\qf\geq\pf$ (pointwise) and that either $\pf$ or $\qf$ is monotone increasing. Assume also that  $Z^{(\pf)}_0$ is stochastically dominated by $Z^{(\qf)}_0$. Then $Z^{(\pf)}$ is stochastically dominated (at every generation) by $Z^{(\qf)}$.
\end{lemma}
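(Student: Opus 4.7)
The plan is to construct, by induction on $n$, a coupling of the chains $Z^{(\pf)}$ and $Z^{(\qf)}$ on a common probability space such that $Z^{(\pf)}_n \le Z^{(\qf)}_n$ a.s. for every $n \in \N$; this pathwise inequality immediately yields the claimed stochastic domination. The coupling is built recursively using the updating functions $\upx$ and $\upy$ from Lemma \ref{uple}, which are precisely tailored to this task.

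For the base case $n=0$, the hypothesis $Z^{(\pf)}_0 \domby Z^{(\qf)}_0$ and the characterization of stochastic domination recalled in Section \ref{sec:notation} allow me to pick an initial coupling with $Z^{(\pf)}_0 \le Z^{(\qf)}_0$ a.s.

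For the inductive step, assume the coupling has been built up to time $n$ with $Z^{(\pf)}_n \le Z^{(\qf)}_n$ a.s., and set $p_n \coloneqq \pf(Z^{(\pf)}_n)$ and $q_n \coloneqq \qf(Z^{(\qf)}_n)$. The key observation, and the only place where the monotonicity hypothesis enters, is that $p_n \le q_n$ a.s. Indeed, if $\qf$ is monotone increasing, then
\[
p_n = \pf(Z^{(\pf)}_n) \le \qf(Z^{(\pf)}_n) \le \qf(Z^{(\qf)}_n) = q_n,
\]
using $\pf \le \qf$ on the first inequality and monotonicity of $\qf$ on the second; symmetrically, if $\pf$ is monotone increasing, then $p_n = \pf(Z^{(\pf)}_n) \le \pf(Z^{(\qf)}_n) \le \qf(Z^{(\qf)}_n) = q_n$. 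The pointwise bound $\pf \le \qf$ alone would not suffice here, because $p_n$ and $q_n$ are evaluated at two different random arguments; this is precisely the subtle point the statement is designed to handle, and it is the only genuine obstacle in the argument.

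Once $p_n \le q_n$ is established, I take an independent triple $U, U_1, U_2 \sim \unif$ (independent of the history up to time $n$) and set
\[
Z^{(\pf)}_{n+1} \coloneqq \upx_{p_n, q_n, \delta}\bigl(Z^{(\pf)}_n, U, U_1, U_2\bigr), \qquad Z^{(\qf)}_{n+1} \coloneqq \upy_{q_n, \delta}\bigl(Z^{(\qf)}_n, U, U_1\bigr).
\]
Applying part 3 of Lemma \ref{uple} conditionally on the history up to time $n$, the resulting conditional laws are $P_{\delta, p_n}(Z^{(\pf)}_n, \cdot)$ and $P_{\delta, q_n}(Z^{(\qf)}_n, \cdot)$, so the coupling has the correct marginal dynamics. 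Finally, the pathwise inequality $Z^{(\pf)}_{n+1} \le Z^{(\qf)}_{n+1}$ follows directly from \eqref{eq:stochastic-orders-updates} in part 1 of Lemma \ref{uple}, which closes the induction. Everything beyond the comparison of $p_n$ and $q_n$ is direct bookkeeping with the primitives provided by Lemma \ref{uple}.
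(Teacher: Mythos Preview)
Your proof is correct and follows essentially the same approach as the paper: both construct an ordered coupling via the updating functions of Lemma~\ref{uple}, using the monotonicity hypothesis to verify $\pf(Z^{(\pf)}_n)\le \qf(Z^{(\qf)}_n)$ and then applying part~(1) of that lemma to propagate the inequality. The paper phrases this by invoking the ``standard coupling'' of Remark~\ref{stcoupling}, whereas you spell out the induction explicitly, but the substance is identical.
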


\begin{proof}
To prove the result it suffices to construct a coupling of the processes $Z^{(\pf)}$ and $Z^{(\qf)}$ such that $Z^{(\pf)}_n\leq Z^{(\qf)}_n$ for all $n\geq 0$. By assumption, we know we can couple the initial values so that  $Z^{(\pf)}_0\leq Z^{(\qf)}_0$. 
Moreover, note that if $\pf$ is increasing and $x\leq y$, then $\pf (x) \le \pf (y) \le \qf(y)$. Similarly, if $\qf$ is increasing and $x\leq y$, then $\pf (x) \le \qf (x) \le \qf(y)$. In both cases, if $x\leq y$, then $\pf (x) \leq \qf(y)$.
Using this and the first part of Lemma \ref{uple}, we see that the standard coupling
starting from $Z^{(\pf)}_0\leq Z^{(\qf)}_0$ satisfy the desired property.
\end{proof}
\subsection{Moment duality in fittest-type wins kernels with mutation}\label{ss:duality_fittest_type_wins}
In this section we focus on a particular case of the models introduced in the previous section, namely when $\pf$ is of the form
$$\pf^{\downarrow}_\delta(x)=x+\frac{2\delta}{3}\left(-x(1-x)\sum_{i=0}^\infty\sigma_i x^i+\theta_0(1-x)-\theta_1 x\right),$$
with $\delta>0$, $\theta_0,\theta_1\geq 0$ and $(\sigma_k)_{k\geq 0}$ satisfying the following assumption.
\begin{assumption}\label{eq:non_increasing}
The coefficients $(\sigma_k)_{k \ge 0}$ are non-negative, non-increasing and their sum is finite. Moreover, $2\delta(\sigma_0+\theta_0+\theta_1)/3<1$.
\end{assumption}
From now onwards, we write $\pf^{\downarrow}$ instead of $\pf_\delta^{\downarrow}$; making the dependency on $\delta$ explicit will be only relevant in Remark \ref{ftw-rem}. Note that $\pf^{\downarrow}$ is the identity when $\sigma_0=0=\theta_0=\theta_1$. Under Assumption \ref{eq:non_increasing}, the function $\pf^{\downarrow}$ satisfies $\pf^{\downarrow}(x)\in[0,1]$ for all $x\in[0,1]$. The first term inside the brackets can be interpreted as frequency dependent selection and favors always type-$1$ individuals. It coincides with the drift term in the \emph{fittest-type wins} Wright--Fisher processes introduced in \cite{GS18} and further studied in \cite{BEH23,CHS19}. The second term inside the brackets can be interpreted as mutation; $\theta_0$ and $\theta_1$ can be seen as the relative strength of mutations to type $0$ and to type $1$, respectively. For this reason, we refer to the kernel $P_{\pf^{\downarrow}}$ as a \emph{fittest type wins kernel with mutation}. In Section \ref{sec:connection-WF} we will see a more precise connection to the fittest-type wins Wright--Fisher processes studied in \cite{GS18}, but let us already mention that those models are tailored to admit a moment dual. 

Note also that under Assumption \ref{eq:non_increasing}, if $\sigma_0>0$, the vector $(\rho_i)_{i\geq 1}$ defined via
$$\rho_i\coloneqq\frac{\sigma_{i-1}-\sigma_i}{\sigma_0},\qquad i\in\N,$$
is a probability measure on $\N$. Moreover, letting $\varphi:[0,1]\to[0,1]$ be the probability generating function of $(\rho_i)_{i\geq 1}$, i.e.
$\varphi(x)\coloneqq \sum_{i=1}^\infty \rho_i x^i$,  $x\in[0,1],$
we can express the selective term in $\pf^{\downarrow}$ as
$$-x(1-x)\sum_{i=0}^\infty\sigma_i x^i=\sigma_0 x(\varphi(x)-1).$$
To avoid case distinctions, we set $\rho_i\coloneqq 0$ for all $i\geq 1$ when $\sigma_0=0$.

In population genetics, individual-based models typically allow for a graphical representation as an interacting particle system that couples both forward and backward evolution. From such a graphical representation, one can read duality relations characterizing functionals of the forward evolution, as for example type frequencies, in terms of appropriate functionals of the backward evolution, as the number of ancestors of a given sample (see \cite{jansen2014} for a nice survey on the various notions of duality associated with Markov processes). One of the simplest and most useful examples is moment duality, which unfortunately only holds under restrictive conditions, such as neutrality. In our case, we don't have a particle picture at hand for our process $Z^\downarrow$, but nevertheless, as stated in Theorem \ref{th:moment_duality_II}, we have a moment dual. We would like to point out that this is rather an exception in comparison with other classical discrete-time models (e.g. Wright--Fisher or Cannings models), where the more involved notion of \emph{factorial or sampling duality} arises in a natural way (see \cite{Cannings74,Glad77,Glad78,Martin99, BEH23}).

We define  
\begin{align}\label{eq:lambdas}
\lambda_{m,k}\coloneqq \dfrac{1}{\delta} \int_{0}^{\delta} x^{k}(1-x)^{m-k} \dd x,\quad m\in\N,\, k\in\N_0,\, 0\leq k\leq m.
\end{align}
Notice that $\sum_{k=0}^m \binom{m}{k} \lambda_{m,k} = 1$, hence $\{\binom{m}{k} \lambda_{m,k}\}$ defines a probability over $\{0,\ldots,m\}$. Furthermore, set $\delta_0\coloneqq\frac{2\delta}{3}$ and $\theta\coloneqq\theta_0+\theta_1$ and define the Markov chain $\wM \coloneqq M^{(\pf^\downarrow)}\coloneqq(\wM_n)_{n\geq 0}$ on  $\overline {\mathbb{N}}_0\coloneqq \N_0 \cup\{ \partial\}$, where $\partial$ denotes a cemetery state, and transitions 
\begin{equation}\label{ratedual}
\begin{split}
    m&\to m \qquad\qquad\text{with prob.}\quad \lambda_{m,0}+(1-\delta_0(\theta+\sigma_0))m\lambda_{m,1}\\
    &\qquad \qquad \qquad \qquad \qquad \qquad \quad +\delta_0\sigma_0\sum_{k=2}^m\binom{m}{k}\lambda_{m,k}\rho_{k-1},\\
    m& \to m-j \qquad\text{with prob.}\quad\delta_0\theta_0\binom{m}{j}\lambda_{m,j}+(1-\delta_0(\theta+\sigma_0)) \binom{m}{j+1}\lambda_{m,j+1},\\
    &\phantom{\quad\text{with prob.}}\qquad\qquad\qquad+\delta_0\sigma_0\sum_{k=j+2}^m\binom{m}{k}\lambda_{m,k}\rho_{k-j-1},\, \quad 1\leq j\leq m, \\
    m&\to m+j \qquad\text{with prob.}\quad\delta_0\sigma_0 \sum_{k=1}^m \binom{m}{k}\lambda_{m,k}\rho_{j+k-1},\, \qquad \quad j\geq 1,\\
    m& \to \partial \qquad\qquad\text{with prob.}\quad \delta_0\theta_1(1-\lambda_{m,0}),
\end{split}
\end{equation}
with the usual assumptions that an empty sum equals $0$ and that $\binom{i}{j}=0$ if $j>i$. Note that when $\theta_0=\theta_1=0$, the state space can be reduced to $\N$.

Now we have all the ingredients to prove Theorem \ref{th:moment_duality_II}.
\begin{proof}[Proof of Theorem \ref{th:moment_duality_II}]
The result is clearly true for $n=0$. In the discrete time setting, proving the inductive step, is equivalent to proving the result for $n=1$. By construction, we have 
\begin{align}
\E_z [(\wZ_1)^{m}]&= (1-\pf^{\downarrow}(z)) \E [z^m(1-\delta U)^m] + \pf^{\downarrow}(z )\E\bigg [ (z(1-\delta U) + \delta U)^m \bigg ] \nonumber\\
&=(1-\pf^{\downarrow}(z))z^{m}\lambda_{m,0} + \pf^{\downarrow}(z) \sum_{j=0}^{m} z^{m-j} \binom{m}{j} \lambda_{m,j}.\label{zEZ}
\end{align}
Using the definition of $\pf^{\downarrow}$ yields
\begin{align*}
 1-\pf^{\downarrow}(z)&= 1-\delta_0\theta_0+ \Big(-1+\delta_0(\sigma_0+\theta)\Big)z-\delta_0\sigma_0\sum_{i=1}^\infty\rho_iz^{i+1},\\
 \pf^{\downarrow}(z) \sum_{j=0}^{m} z^{m-j} \binom{m}{j} \lambda_{m,j}&=\left(\sum_{j=1}^m\alpha_j^{-}z^{-j}+\alpha_0+\sum_{j=1}^\infty \alpha_j^{+}z^j\right)z^m,
\end{align*}
with 
\begin{align*}
   \alpha_j^{-}&=\delta_0\theta_0\binom{m}{j}\lambda_{m,j}+(1-\delta_0(\theta+\sigma_0)) \binom{m}{j+1}\lambda_{m,j+1}+\delta_0\sigma_0\sum_{k=j+2}^m\binom{m}{k}\lambda_{m,k}\rho_{k-j-1},\\
   \alpha_0&=\delta_0\theta_0\lambda_{m,0}+(1-\delta_0(\theta+\sigma_0))m\lambda_{m,1}+\delta_0\sigma_0\sum_{k=2}^m\binom{m}{k}\lambda_{m,k}\rho_{k-1},\\
   \alpha_j^{+}&=1_{\{j=1\}}(1-\delta_0(\theta+\sigma_0))\lambda_{m,0}+\delta_0\sigma_0 \left(\sum_{k=1}^m \binom{m}{k}\lambda_{m,k}\rho_{j+k-1}+1_{\{j\geq 2\}}\lambda_{m,0}\rho_{j-1}\right).
\end{align*}
Hence,
$$\E_z [(\wZ_1)^{m}]=\sum_{j=1}^m\alpha_j^{-}z^{m-j}+\widehat{\alpha}_0z^m+\sum_{j=1}^\infty \widehat{\alpha}_j^{+}z^{m+j},$$
with
\begin{align*}
   \widehat{\alpha}_0&=(1-\delta_0\theta_0)\lambda_{m,0}+\alpha_0=\lambda_{m,0}+(1-\delta_0(\theta+\sigma_0))m\lambda_{m,1}+\delta_0\sigma_0\sum_{k=2}^m\binom{m}{k}\lambda_{m,k}\rho_{k-1},\\
   \widehat{\alpha}_j^{+}&=\alpha_j^{-}-\lambda_{m,0}(1_{\{j=1\}}(1-\delta_0(\theta+\sigma_0))+1_{\{j\geq 2\}}\delta_0\sigma_0\rho_{j-1})=\delta_0\sigma_0 \sum_{k=1}^m \binom{m}{k}\lambda_{m,k}\rho_{j+k-1}.
\end{align*}
Since $$\sum_{j=1}^m\alpha_j^{-}+\widehat{\alpha}_0+\sum_{j=1}^\infty \widehat{\alpha}_j^{+}=1-(1-\lambda_{m,0})\delta_0\theta_1,$$
the result follows.
\end{proof}

\subsection{A particle picture interpretation of the dual process}\label{ss:pp_repres}
Although we do not have a particle-system representation for the forward dynamics at hand, the process $\wM$
admits a particle  interpretation, in close analogy with classical models in population genetics. This ancestral picture is not derived from the forward dynamics; it merely serves as a heuristic device to visualize and organize the transition rates. This viewpoint will nonetheless prove valuable when establishing comparison results; see Lemma \ref{compa} and Remark \ref{compare} below.

\paragraph{Neutral case ($\sigma_0=\theta_0=\theta_1=0$).}
As in the interpretation of the Wright--Fisher kernels in
Section~\ref{sec:overview-results}, in the neutral case one may picture each
generation as replacing a fraction $\delta U$ of the population, where
$U\sim\unif$, by the offspring of a uniformly chosen individual.  
More precisely, this mechanism is the discrete-time analog of the
$\Lambda$--Wright--Fisher process described at the beginning of Section \ref{sec:overview-results} with
\[
\Lambda(\dd u)=\delta^{-1}\,\mathbf{1}_{[0,\delta]}(u)\,u^{2}\,\dd u,
\]
whose reproduction events occur in continuous time at Poisson points with
intensity $\dd t\,\dd v\,\Lambda(\dd u)\,u^{-2}$ (see \cite{Bertoin2003,
Bertoin2006}).

Further, in this neutral setting, the transitions of the dual $\wM$ simplify to
\begin{align*}
m &\to m
&& \text{with probability }\ \lambda_{m,0}+m\,\lambda_{m,1},\\
m &\to m-k+1
&& \text{with probability }\ \binom{m}{k}\,\lambda_{m,k},
\qquad 1<k\le m.
\end{align*}

In analogy with the forward picture, it is convenient to think of $\wM_n$ as
the number of distinct ancestors, $n$ generations in the past, of a present-day
sample of size $\wM_0=m$. At each backward step, every ancestor is independently marked with probability \(u\delta\), where \(u\) is a realization of an independent uniform random variable on \([0,1]\). All marked ancestors are then merged, meaning they share a common parent in the previous generation; this parent is chosen uniformly at random from the full pool of individuals present at that time.
For readers familiar with exchangeable coalescents, this leads to a discrete-time
analogue of a $\Lambda$–coalescent with the above measure $\Lambda$.
$\Lambda$–coalescents were introduced independently in
\cite{DK99, Pitman1999, Sa99} (see \cite{Ber09} for a survey) and have since
been extensively studied and used as prototypical models for genealogies in neutral
populations. Theorem~\ref{th:moment_duality_II} can thus be interpreted as a discrete-time analogue 
of the classical moment duality between $\Lambda$–coalescents and $\Lambda$–Wright--Fisher processes.

\paragraph{Fittest type wins with mutation.} Equipped with the intuition from the neutral case, we now turn to the process $\wM$ and the interpretation of the duality relation in the general setting.
Suppose that at time~$n$ we sample $m$ individuals and wish to compute the 
probability that all of them are of type~$0$. Conditioned on $Z_n^\downarrow$, 
this probability is simply $(Z_n^\downarrow)^m$, and taking expectations 
yields the right-hand side of the duality relation~\eqref{momdi2}. Ideally, 
we would like to recover the same quantity through an ancestral process whose 
line--counting process is~$\wM$. The idea is to trace backward in time the 
individuals, \emph{potential ancestors}, whose types must be specified 
in order to determine whether the sampled individuals are all of type~$0$. 
We would like the ancestral system to start with the $m$ sampled lines, and 
for the sample to be entirely of type~$0$ precisely when, at every backward 
time~$k$, all $\wM_k$ ancestral lines present at that time are of type~$0$. 
Consequently, if we sample at forward time~$n$ and follow the genealogy back 
$n$ generations, then, conditional on $\wM_n$, the probability that the sample 
contains only type~$0$ individuals is $z^{\wM_n}$. Taking expectations then yields the right-hand side of \eqref{momdi2}, and hence the duality relation.

Since we do not have a particle picture for the forward dynamics, we instead 
construct a branching--coalescing system that matches the rates of the dual 
process, we describe how types propagate along its lines, and we provide a 
genealogical interpretation of its various transitions. It is important to 
stress, however, that this construction is obtained from the duality relation 
rather than from a forward particle model that is naturally compatible with 
the backward process. In particular, it does not give a new proof of the 
duality; it only offers a way to visualize the backward process.

As usual in genealogical constructions, we work in an untyped manner: we trace
ancestral lineages without assigning them types, except when a mutation occurs,
at which point the type of the affected lineage is revealed. As in the neutral case, at each generation we independently mark each potential ancestor with probability $\delta u$, where $u$ is the realization of an independent $\mathrm{Unif}[0,1]$ random variable. All marked individuals share
the same parent in the previous generation, and we assume they inherit that
parent's type; hence, their lineages can be merged. This coalescence event  may be
immediately followed (in the backward direction) by one of the
following events.

\begin{itemize}

\item[\textbf{$\bullet$}] \emph{Neutral events.}
With probability $1 - \delta_0(\theta + \sigma_0)$, the event is neutral and is
not coupled to any further update.

\item[\textbf{$\bullet$}] \emph{Deleterious mutation.}
With probability $\delta_0 \theta_0$, the merger event is followed by a mutation
to type~$0$ (although seen backward in time, its type effect is forward in time).
This mutation may occur in any of the lineages present after the coalescence event. The affected lineage is removed from further consideration, since all of its descendants will necessarily be of
type~$0$, and its lineage no longer needs to be traced into the past.

\item[\textbf{$\bullet$}] \emph{Beneficial mutation.}
With probability $\delta_0 \theta_1$, the merger of the marked individuals is
followed by a mutation to type~$1$. This mutation may occur in any of the lineages present after the coalescence event. Thus, at least one potential ancestor of type~$1$ appears, which will
propagate forward in time and produce at least one sampled individual of
type~$1$. Thus, the event ``all sampled individuals are of type~$0$'' becomes impossible,
and we terminate the genealogical exploration by sending the process to the
cemetery state~$\partial$.

\item[\textbf{$\bullet$}] \emph{Selective events.}
With probability $\delta_0 \sigma_0 \rho_j$, the merger is followed by an event in which one of the lineages present after the coalescence event becomes the child
of a group of $j+1$ \emph{potential parents} from the previous generation.
The true parent is selected uniformly among the potential parents of type~$1$,
if any are present, and uniformly among all $j+1$ potential parents otherwise.
Since type information (other than mutation) is not yet known, we must trace
back the lineages of all $j+1$ potential parents.
\end{itemize}
\begin{figure}[t!]\label{particles}
\centering

\begin{minipage}{0.25\textwidth}
\centering
\begin{tikzpicture}
\node[draw] at (0.5,5.5) {Neutral};
 \draw [thick, ->] (1,-1) -- (0,-1) node[midway,below] {$t$};
 \shade[ball color=white, opacity =1] (0,0) circle (0.1);
 \shade[ball color=white, opacity =1] (0,1) circle (0.1);
 \shade[ball color=white, opacity =1] (0,1.5) circle (0.1);
 \shade[ball color=white, opacity =1] (0,2) circle (0.1);
 \shade[ball color=white, opacity =1] (0,3) circle (0.1);
 \shade[ball color=white, opacity =1] (0,3.5) circle (0.1);

 \shade[ball color=white, opacity =0.2] (0.5,0) circle (0.1);
 \shade[ball color=white, opacity =0.2] (0.5,1) circle (0.1);
 \shade[ball color=white, opacity =0.2] (0.5,1.5) circle (0.1);
 \shade[ball color=white, opacity =0.2] (0.5,2) circle (0.1);
 \shade[ball color=white, opacity =0.2] (0.5,3) circle (0.1);
 \shade[ball color=white, opacity =0.2] (0.5,3.5) circle (0.1);

  \draw[]        (0.4,0)  to[out=-20,in=180] (0.1,0);
  \draw[]        (0.4,1)  to[out=-20,in=180] (0.1,1);
  \draw[]        (0.4,1.5)   to[out=-20,in=180] (0.1,1.5);
  \draw[]        (0.4,2)   to[out=-20,in=180] (0.1,2);
  \draw[]        (0.4,3)   to[out=-20,in=180] (0.1,3);
  \draw[]        (0.4,3.5)  to[out=-20,in=180] (0.1,3.5);

  \draw[]        (0.6,0)  to[out=-20,in=180] (0.9,0);
  \draw[]        (0.6,3.5)   to[out=-20,in=180] (0.9,0.5);
  \draw[]        (0.6,1)  to[out=-20,in=180] (0.9,1);
  \draw[]        (0.6,1.5)   to[out=-20,in=180] (0.9,1.5);
  \draw[]        (0.6,2)   to[out=-20,in=180] (0.9,2);
  \draw[]        (0.6,3.5)   to[out=-20,in=180] (0.9,2.5);
  \draw[]        (0.6,3)   to[out=-20,in=180] (0.9,3);
  \draw[]        (0.6,3.5)  to[out=-40,in=-80] (0.9,3.5);
  \draw[]        (0.6,3.5) to[out=-20,in=180] (0.9,4);
  
 \shade[ball color=white, opacity =1] (1,0) circle (0.1);
 \shade[ball color=black, opacity =1] (1,0.5) circle (0.1);
 \shade[ball color=white, opacity =1] (1,1) circle (0.1);
 \shade[ball color=white, opacity =1] (1,1.5) circle (0.1);
 \shade[ball color=white, opacity =1] (1,2) circle (0.1);
 \shade[ball color=black, opacity =1] (1,2.5) circle (0.1);
 \shade[ball color=white, opacity =1] (1,3) circle (0.1);
 \shade[ball color=black, opacity =1] (1,3.5) circle (0.1);
 \shade[ball color=black, opacity =1] (1,4) circle (0.1);
 
 \end{tikzpicture}
 \end{minipage}\begin{minipage}{0.25\textwidth}
\centering
\begin{tikzpicture}
\node[draw] at (0.5,5.5) {+Mut. to $0$};
 \draw [thick, ->] (1,-1) -- (0,-1) node[midway,below] {$t$};
 shade[ball color=white, opacity =1] (0,0) circle (0.1);

 \shade[ball color=white, opacity =1] (0,1) circle (0.1);
 \shade[ball color=white, opacity =1] (0,1.5) circle (0.1);

 \shade[ball color=white, opacity =1] (0,3) circle (0.1);
 \shade[ball color=white, opacity =1] (0,3.5) circle (0.1);
 \shade[ball color=white, opacity =1] (0,0) circle (0.1);

 \draw [thick, ->] (1,-1) -- (0,-1) node[midway,below] {$t$};
 \shade[ball color=white, opacity =.2] (0.5,0) circle (0.1);
 \shade[ball color=white, opacity =.2] (0.5,1) circle (0.1);
 \shade[ball color=white, opacity =.2] (0.5,1.5) circle (0.1);
 \shade[ball color=blue, opacity =.2] (0.5,2) circle (0.1);

 \shade[ball color=white, opacity =.2] (0.5,3) circle (0.1);
 \shade[ball color=white, opacity =.2] (0.5,3.5) circle (0.1);

  \draw[]        (0.1,0)  to[out=-20,in=180] (0.4,0);

  \draw[]        (0.1,1)  to[out=-20,in=180] (0.4,1);
  \draw[]        (0.1,1.5)   to[out=-20,in=180] (0.4,1.5);

  \draw[]        (0.1,3)   to[out=-20,in=180] (0.4,3);
  \draw[]        (0.1,3.5)  to[out=-20,in=180] (0.4,3.5);

  \draw[]        (0.6,0)  to[out=-20,in=180] (0.9,0);
  \draw[]        (0.6,3.5)   to[out=-20,in=180] (0.9,0.5);
  \draw[]        (0.6,1)  to[out=-20,in=180] (0.9,1);
  \draw[]        (0.6,1.5)   to[out=-20,in=180] (0.9,1.5);
  \draw[]        (0.6,2)   to[out=-20,in=180] (0.9,2);
  \draw[]        (0.6,3.5)   to[out=-20,in=180] (0.9,2.5);
  \draw[]        (0.6,3)   to[out=-20,in=180] (0.9,3);
  \draw[]        (0.6,3.5)  to[out=-20,in=180] (0.9,3.5);
  \draw[]        (0.6,3.5) to[out=-20,in=180] (0.9,4);
  
 \shade[ball color=white, opacity =1] (1,0) circle (0.1);
 \shade[ball color=black, opacity =1] (1,0.5) circle (0.1);
 \shade[ball color=white, opacity =1] (1,1) circle (0.1);
 \shade[ball color=white, opacity =1] (1,1.5) circle (0.1);
 \shade[ball color=white, opacity =1] (1,2) circle (0.1);
 \shade[ball color=black, opacity =1] (1,2.5) circle (0.1);
 \shade[ball color=white, opacity =1] (1,3) circle (0.1);
 \shade[ball color=black, opacity =1] (1,3.5) circle (0.1);
 \shade[ball color=black, opacity =1] (1,4) circle (0.1);
 \end{tikzpicture}
 \end{minipage}\begin{minipage}{0.25\textwidth}
\centering
\begin{tikzpicture}
\node[draw] at (0.5,5.5) {+Mut. to $1$};
 \draw [thick, ->] (1,-1) -- (0,-1) node[midway,below] {$t$};

 \shade[ball color=white, opacity =.2] (0.5,0) circle (0.1);
 \shade[ball color=white, opacity =.2] (0.5,1) circle (0.1);
 \shade[ball color=white, opacity =.2] (0.5,1.5) circle (0.1);
 \shade[ball color=red, opacity =.2] (0.5,2) circle (0.1);

 \shade[ball color=white, opacity =.2] (0.5,3) circle (0.1);
 \shade[ball color=white, opacity =.2] (0.5,3.5) circle (0.1);

\node[draw] at (0,2) {$\partial$};

  \draw[]        (0.6,0)  to[out=-20,in=180] (0.9,0);
  \draw[]        (0.6,3.5)   to[out=-20,in=180] (0.9,0.5);
  \draw[]        (0.6,1)  to[out=-20,in=180] (0.9,1);
  \draw[]        (0.6,1.5)   to[out=-20,in=180] (0.9,1.5);
  \draw[]        (0.6,2)   to[out=-20,in=180] (0.9,2);
  \draw[]        (0.6,3.5)   to[out=-20,in=180] (0.9,2.5);
  \draw[]        (0.6,3)   to[out=-20,in=180] (0.9,3);
  \draw[]        (0.6,3.5)  to[out=-20,in=180] (0.9,3.5);
  \draw[]        (0.6,3.5) to[out=-20,in=180] (0.9,4);
  
 \shade[ball color=white, opacity =1] (1,0) circle (0.1);
 \shade[ball color=black, opacity =1] (1,0.5) circle (0.1);
 \shade[ball color=white, opacity =1] (1,1) circle (0.1);
 \shade[ball color=white, opacity =1] (1,1.5) circle (0.1);
 \shade[ball color=white, opacity =1] (1,2) circle (0.1);
 \shade[ball color=black, opacity =1] (1,2.5) circle (0.1);
 \shade[ball color=white, opacity =1] (1,3) circle (0.1);
 \shade[ball color=black, opacity =1] (1,3.5) circle (0.1);
 \shade[ball color=black, opacity =1] (1,4) circle (0.1);
 \end{tikzpicture}
 \end{minipage}\begin{minipage}{0.25\textwidth}
\centering
\begin{tikzpicture}
\node[draw] at (0.5,5.5) {+Selection};
 \draw [thick, ->] (1,-1) -- (0,-1) node[midway,below] {$t$};
 \shade[ball color=white, opacity =1] (0,0) circle (0.1);
 \shade[ball color=white, opacity =1] (0,0.5) circle (0.1);
 \shade[ball color=white, opacity =1] (0,1) circle (0.1);
 \shade[ball color=white, opacity =1] (0,1.5) circle (0.1);
 \shade[ball color=white, opacity =1] (0,2) circle (0.1);
 \shade[ball color=white, opacity =1] (0,2.5) circle (0.1);
 \shade[ball color=white, opacity =1] (0,3) circle (0.1);
 \shade[ball color=white, opacity =1] (0,3.5) circle (0.1);
 \shade[ball color=white, opacity =1] (0,4) circle (0.1);
  \shade[ball color=white, opacity =1] (0,-0.5) circle (0.1);
   \shade[ball color=white, opacity =1] (0,4.5) circle (0.1);

 \shade[ball color=white, opacity =.2] (0.5,0) circle (0.1);
 \shade[ball color=white, opacity =.2] (0.5,1) circle (0.1);
 \shade[ball color=white, opacity =.2] (0.5,1.5) circle (0.1);
 \shade[ball color=black, opacity =.2] (0.5,2) circle (0.1);

 \shade[ball color=white, opacity =.2] (0.5,3) circle (0.1);
 \shade[ball color=white, opacity =.2] (0.5,3.5) circle (0.1);

  \draw[]        (0.1,0)  to[out=-20,in=180] (0.4,0);

  \draw[]        (0.1,1)  to[out=-20,in=180] (0.4,1);
  \draw[]        (0.1,1.5)   to[out=-20,in=180] (0.4,1.5);

  \draw[]        (0.1,3)   to[out=-20,in=180] (0.4,3);
  \draw[]        (0.1,3.5)  to[out=-20,in=180] (0.4,3.5);
  
  \draw[]        (0.1,-0.5)  to[out=10,in=190] (0.4,2);
    \draw[]        (0.1,0.5)  to[out=10,in=190] (0.4,2);
      \draw[]        (0.1,2)  to[out=10,in=190] (0.4,2);
        \draw[]        (0.1,2.5)  to[out=10,in=190] (0.4,2);
             \draw[]        (0.1,4)  to[out=10,in=190] (0.4,2);
        \draw[]        (0.1,4.5)  to[out=10,in=190] (0.4,2);

  \draw[]        (0.6,0)  to[out=-20,in=180] (0.9,0);
  \draw[]        (0.6,3.5)   to[out=-20,in=180] (0.9,0.5);
  \draw[]        (0.6,1)  to[out=-20,in=180] (0.9,1);
  \draw[]        (0.6,1.5)   to[out=-20,in=180] (0.9,1.5);
  \draw[]        (0.6,2)   to[out=-20,in=180] (0.9,2);
  \draw[]        (0.6,3.5)   to[out=-20,in=180] (0.9,2.5);
  \draw[]        (0.6,3)   to[out=-20,in=180] (0.9,3);
  \draw[]        (0.6,3.5)  to[out=-20,in=180] (0.9,3.5);
  \draw[]        (0.6,3.5) to[out=-20,in=180] (0.9,4);
  
 \shade[ball color=white, opacity =1] (1,0) circle (0.1);
 \shade[ball color=black, opacity =1] (1,0.5) circle (0.1);
 \shade[ball color=white, opacity =1] (1,1) circle (0.1);
 \shade[ball color=white, opacity =1] (1,1.5) circle (0.1);
 \shade[ball color=white, opacity =1] (1,2) circle (0.1);
 \shade[ball color=black, opacity =1] (1,2.5) circle (0.1);
 \shade[ball color=white, opacity =1] (1,3) circle (0.1);
 \shade[ball color=black, opacity =1] (1,3.5) circle (0.1);
 \shade[ball color=black, opacity =1] (1,4) circle (0.1);
 \end{tikzpicture}
 \end{minipage}
 \caption{Building blocks of the particle representation of the dual process $\widehat M$.
Each panel depicts a one-step transition of the dual, with an intermediate
configuration included to illustrate that every transition can be interpreted as
the outcome of two successive events.  In the dual picture, coalescence events may be followed by selective branchings or by killings of ancestral lines (or of the entire process).  Forward in time,
these correspond respectively to mutation or selective reproduction events
followed by neutral reproduction events.
}
\end{figure}
\noindent
Summarizing, the transitions of this ancestral system, and hence of $\wM$, consist of
(i) pure mergers (neutral), 
(ii) mergers with deletion (neutral + mutation to type $0$),
(iii) branching--merger events (neutral + selection),
(iv) killing events (neutral + mutation to type $1$); see Fig. \ref{particles} for an illustration of these four types of events. Thus, $\wM$ may be viewed as a generalization of the \emph{killed Ancestral Selection Graph} (see~\cite{BCH17,BW18}).

The particle construction of the chain $\wM$ described above is a key ingredient in the proof of the following comparison principle. This principle will subsequently be used to establish the asymptotic behavior of the dual chains in Proposition~\ref{pr-abs} of the next section.
\begin{lemma}[The effect of mutations]\label{compa}
There is a coupling between the chain $\wM$ and the chain $M^{\downarrow, 0}$ without mutations such that $\wM_n\leq M^{\downarrow,0}_n$ until the first time $\wM$ absorbs at $\partial$.
\end{lemma}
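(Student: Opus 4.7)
The plan is to exploit the particle interpretation of $\widehat{M}$ outlined just above the statement and build a parallel construction for $M^{\downarrow,0}$, coupling the two chains by sharing as much of the driving randomness as possible. Starting both chains from the same initial value, I will argue by induction on generations that the bound $\widehat{M}_n \le M^{\downarrow,0}_n$ persists until $\widehat{M}$ first enters $\partial$.

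First I specify the shared randomness at each generation. A uniform $U$ in $[0,\delta]$ determines the merger intensity, and a sequence $(B_i)_{i\ge 1}$ that, conditional on $U$, is i.i.d.\ Bernoulli$(U)$ provides the individual marks. Given $\widehat{M}_n \le M^{\downarrow,0}_n$, reading off the first $\widehat{M}_n$, respectively $M^{\downarrow,0}_n$, entries of the mark sequence yields counts $K_{\widehat{M}} \le K_{\downarrow}$ satisfying the crucial pointwise control $K_{\downarrow}-K_{\widehat{M}} \le M^{\downarrow,0}_n - \widehat{M}_n$. A further pair $(W,R)$, independent of everything else, with $W \sim \unif$ and $R$ distributed according to $(\rho_i)_{i\ge 1}$, selects the event type and, in the selective case, its range. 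On the $\widehat{M}$-side the event is labelled deleterious, beneficial, selective, or pure neutral according to which of four subintervals of $[0,1]$ of respective lengths $\delta_0\theta_0$, $\delta_0\theta_1$, $\delta_0\sigma_0$, and $1-\delta_0(\theta+\sigma_0)$ contains $W$. On the $M^{\downarrow,0}$-side, both mutation events of $\widehat{M}$ are \emph{reinterpreted} as pure neutral events, while selective and neutral events are copied verbatim. Since $\delta_0\theta + (1-\delta_0(\theta+\sigma_0)) = 1-\delta_0\sigma_0$, the event-type marginal on the $M^{\downarrow,0}$-side matches the mutation-free kernel, and a direct comparison with the transitions displayed for $\widehat{M}$ (specialised to $\theta_0=\theta_1=0$) confirms that $M^{\downarrow,0}$ has the correct law.

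It remains to check the inequality one event type at a time. Recall the update rules on $\{K\ge 1\}$: a neutral event sends $m$ to $m-K+1$, a deleterious one to $m-K$, a selective one of range $R$ to $m-K+1+R$, and a beneficial one sends $\widehat{M}$ to $\partial$; on $\{K=0\}$ the chain is unchanged regardless of the event type. In the beneficial case the claim is vacuous. In every other case the desired inequality $M^{\downarrow,0}_{n+1}-\widehat{M}_{n+1}\ge 0$ reduces, after a one-line algebraic manipulation, to $K_{\downarrow}-K_{\widehat{M}}\le M^{\downarrow,0}_n-\widehat{M}_n$ (with an additional $+1$ of slack when one side experiences a merger and the other does not), which is precisely the control built into the coupling. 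The main bookkeeping obstacle lies in the deleterious subcase with $K_{\widehat{M}}\ge 1$, where $\widehat{M}$ loses one more line than the reinterpreted neutral update on the $M^{\downarrow,0}$ side; however this deficit is absorbed by the $+1$ coming from the retained parent on the $M^{\downarrow,0}$ side, so the bound holds with room to spare and the induction closes.
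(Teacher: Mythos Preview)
Your proposal is correct and follows essentially the same route as the paper: both arguments use the particle interpretation to couple the two chains via shared marks and a shared event-type selector, reinterpreting mutation events on the $\widehat{M}$ side as pure neutral events on the $M^{\downarrow,0}$ side. Your write-up is somewhat more explicit than the paper's in spelling out the driving randomness and in verifying the key inequality $K_\downarrow - K_{\widehat{M}} \le M^{\downarrow,0}_n - \widehat{M}_n$, but the underlying idea is identical.
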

\begin{proof}
Set $\wM_0=m_0= M_0^{\downarrow,0}$ and assume that at generation $n$, we have $M^{\downarrow,0}_n\geq \wM_n\neq \partial$. In that case, we make both processes transition as follows. Let $S_0$ and $S$ denote systems of particles with $M^{\downarrow,0}_n$ and $\wM_n$ particles, respectively. Let us identify each particle in $S$ to a unique particle in $S_0$. Now we mark independently each particle in $S_0$ with probability $u$, where $u$ is a realization of a uniform random variable on $[0,\delta]$; the particles in $S$ copy the marks of their partners in $S_0$. With probability $1-\delta_0(\theta+\sigma_0)$ we only merge the marked particles in both systems. With probability $\delta_0\sigma_0\rho_j$ the merger event is coupled to a selective event of range $j$ in both systems. With probability $\delta_0\theta_0$ the merger event is coupled to a mutation event to type $0$ in $S$ leading to the additional loss of one particle; nothing happens in $S_0$. With probability $\delta_0\theta_1$ the merger event is coupled to a mutation event to type $1$ in $S$ sending the system to $\partial$; nothing happens in $S_0$. Hence, either $\wM_{n+1}=\partial$, or, in all the other cases, the order is preserved, i.e.
$M^{\downarrow,0}_{n+1}\geq \wM_{n+1}\neq \partial$. The results follows via a simple iteration of the argument.
\end{proof}
\begin{remark}[Comparison results]\label{compare}
The coupling technique used in the proof of Lemma \ref{compa} can be adapted to compare instances of the dual process $M^{\downarrow}$ with different branching rates. For example, consider $M^{\downarrow,1}$ and $M^{\downarrow,2}$ as instances of the dual process $M^{\downarrow}$ with the same initial value and mutation parameters $\theta_0,\theta_1$, but branching rates $(\sigma_j)_{j=0}^\infty$ and $({\sigma}_j^*)_{j=0}^\infty$, respectively. If $\sigma_j\leq \sigma_j^*$ for all $j\geq 0$, then one can couple $M^{\downarrow,1}$ and  $M^{\downarrow,2}$, so that $M^{\downarrow,1}_n\leq M^{\downarrow,2}_n$ for all $n$.\end{remark}
\subsection{Application: long-term behavior via moment duality}\label{sec:appli}
In this section we investigate the long–term behavior of the process
$Z^\downarrow$ introduced in Section~\ref{ss:duality_fittest_type_wins}.  
Our approach is as follows. We first analyze the asymptotic behavior of the process $\wM$, and then,
by means of Theorem~\ref{th:moment_duality_II}, we deduce the corresponding
long–term behavior of $Z^\downarrow$ in two specific situations:  
(i) the case without mutation, and  
(ii) the case with mutation but without selection.  
The results presented in this section require the following additional
assumption on the parameters $(\sigma_k)_{k \geq 0}$.

\begin{assumption}\label{eq:non_explosion}
The coefficients $(\rho_k)_{k\geq 1}$ satisfy 
\begin{align*}
\sum_{k \ge 1} k \rho_k \eqqcolon \mathfrak{b} < \infty.
\end{align*}
\end{assumption}
Note that, by the definition of the coefficients $(\rho_k)_{k \geq 1}$, 
under Assumption~\ref{eq:non_increasing},  
Assumption~\ref{eq:non_explosion} is equivalent to the existence of the limit 
$\lim_{k \to \infty} k \sigma_k$.
\begin{proposition}\label{pr-abs}
Under Assumptions \ref{eq:non_increasing} and \ref{eq:non_explosion} the following assertions hold.
\begin{enumerate}
\item If $\theta_0=\theta_1=0$, then $\wM$ is positive recurrent and admits a unique invariant distribution with support equal to $\N$.  
\item If $\theta_0,\theta_1>0$, then $\wM$ is almost surely absorbed in $\{0,\partial\}$ in finite time.
\end{enumerate}
\end{proposition}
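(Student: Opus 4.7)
I plan to handle the two parts separately: part (2) follows from a uniform lower bound on the one-step killing probability, while part (1) requires a Foster--Lyapunov argument based on a drift identity derived from the particle interpretation given after Theorem \ref{th:moment_duality_II}.

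For part (2), first observe that both $0$ and $\partial$ are absorbing: indeed $\lambda_{0,0}=1$, so from state $0$ no marking event ever occurs and the self-loop probability is $1$, while $\partial$ is the cemetery. Next, for every $m\geq 1$, the one-step transition probability to $\partial$ equals $\delta_0\theta_1(1-\lambda_{m,0})$. Since $m\mapsto\lambda_{m,0}$ is non-increasing and a direct computation gives $\lambda_{1,0}=1-\delta/2$, we have $1-\lambda_{m,0}\geq\delta/2$ uniformly in $m\geq 1$, so the killing probability is bounded below by $\theta_1\delta^2/3>0$. Letting $\tau:=\inf\{n: \wM_n\in\{0,\partial\}\}$, iterating the Markov property yields
$$\mathbb{P}(\tau>n)\leq (1-\theta_1\delta^2/3)^n,$$
so $\tau$ is almost surely finite (and has finite mean).

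For part (1), the absence of mutations renders $\partial$ inaccessible, and moreover no merger reduces a positive state to $0$, so $\wM$ takes values in $\N$. My plan is to apply the Foster--Lyapunov criterion with the test function $V(m)=m$. First I would establish irreducibility on $\N$: from any $m\geq 2$ a pure neutral $2$-merger (no selective coupling) decreases the chain by $1$, while from any $m\geq 1$ a single-mark selective event of range $j^\star:=\min\{j\geq 1:\rho_j>0\}$ increases it by $j^\star$; since $\gcd(1,j^\star)=1$, every state in $\N$ is reachable from every other. Second, I would compute the drift by decomposing the transition through the number $K$ of marked particles (with $\E[K]=m\delta/2$) and the selective jump size $J$ (with $\E[J]=\mathfrak{b}<\infty$ by Assumption \ref{eq:non_explosion}). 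This should produce
$$\E[\wM_{n+1}-m\mid \wM_n=m]=-\frac{m\delta}{2}+(1-\lambda_{m,0})(1+\delta_0\sigma_0\mathfrak{b}),$$
which is bounded above by a fixed constant minus $m\delta/2$, hence by $-1$ for $m$ larger than some threshold $m_0$. Combined with the obvious finite one-step mean (again thanks to $\mathfrak{b}<\infty$), the classical Foster--Lyapunov theorem yields positive recurrence and a unique invariant distribution; irreducibility implies its support is all of $\N$.

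The main obstacle I anticipate is the bookkeeping behind the drift identity: one must carefully combine contributions from mergers of size $k$ with selective events of range $j$, rearrange the double sums using $\sum_j\rho_j=1$ and $\sum_j j\rho_j=\mathfrak{b}$, and invoke $\sum_{k=0}^m k\binom{m}{k}\lambda_{m,k}=m\delta/2$. Assumption \ref{eq:non_explosion} is crucial here; were $\mathfrak{b}$ infinite, selective events would impart an infinite expected up-jump and the linear Lyapunov function would fail, forcing one to search for a sublinear test function with considerably more delicate estimates.
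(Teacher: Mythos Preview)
Your proposal is correct. For part (1) you land on the same Foster--Lyapunov argument with $V(m)=m$ and the same drift identity
\[
\E_m[\wM_1]-m=-\tfrac{\delta}{2}m+(1-\lambda_{m,0})(1+\delta_0\sigma_0\mathfrak{b})
\]
that the paper uses. The only difference is how this identity is obtained: the paper derives it by differentiating the one-step moment duality $\E_z[(\wZ_1)^m]=\E_m[z^{\wM_1}]$ at $z=1$ (using the explicit form of $\pf^{\downarrow}$ on the left-hand side), whereas you compute it combinatorially from the particle description via $\E[K]=m\delta/2$ and $\E[J]=\mathfrak{b}$. Both routes are short; the duality trick is slick but requires Theorem~\ref{th:moment_duality_II} as input, while your direct count is self-contained.

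For part (2) your argument is genuinely different and more elementary than the paper's. The paper invokes Lemma~\ref{compa} to couple $\wM$ below the mutation-free chain $M^{\downarrow,0}$, then uses the positive recurrence of $M^{\downarrow,0}$ established in part (1) to produce infinitely many visits to $1$, at each of which $\wM$ has a fixed chance of absorbing. Your argument bypasses all of this: the uniform lower bound $\delta_0\theta_1(1-\lambda_{m,0})\ge \theta_1\delta^2/3$ on the one-step killing probability immediately gives a geometric tail for the absorption time. This is shorter, does not rely on part (1) or the coupling lemma, and in fact uses neither $\theta_0>0$ nor Assumption~\ref{eq:non_explosion}. The paper's route, on the other hand, makes the interaction between the mutation and mutation-free dynamics explicit, which is of independent interest.
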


\begin{proof}
Assume first that $\theta_0=\theta_1=0$. In this case, $\pf^{\downarrow}$ is differentiable and
$$\lim_{z \to 1}(\pf^{\downarrow})' (z)=1+ \dfrac{2\delta}{3} \sigma_0 \mathfrak{b}<\infty.$$ 
Using this and Eq. \eqref{zEZ}, we deduce that
$z\mapsto\E_z[(\wZ_1)^m]$ is also differentiable. Moreover, differentiating this function first using \eqref{zEZ} and then using
the moment duality \eqref{momdi2} with $n=1$, and letting $z \to 1$ in both expressions, we infer that
\begin{align*}
\E_m [\wM_1] -m =  - \dfrac{\delta}{2} m + \left (1+ \dfrac{2\delta}{3}\sigma_0 \mathfrak{b}\right) (1- \lambda_{m,0}),
\end{align*}
which can be made $< -1$ uniformly on $m \ge m_0$ for some $m_0 \ge 1$ (which may depend on $\delta$). Hence, we deduce from  \cite[Thm. 2.19]{benaimhurth2022} that any finite subset of $\N$ is positive recurrent for the chain $\wM$. Since, in addition,  $\wM$ is irreducible, the result follows. 

Assume now that $\theta_0,\theta_1>0$. As seen in Lemma \ref{compa} one can couple the chain $\wM$ to a copy $M^{\downarrow,0}$ of the chain without mutations in a way that $\wM\leq M^{\downarrow,0}$ until the first time $\wM$ absorbs at $\partial$. By construction, every time $M^{\downarrow,0}$ is at $1$, either $\wM$ is already absorbed at $0$ or at $\partial$, or it will absorb at $0$ with a constant positive probability in the next generation. So $\wM$ will absorb at $0$ or $\partial$ at the latest after a geometric number of excursions of $M^{\downarrow,0}$ (away from $1$). The result follows from the positive recurrence of $M^{\downarrow,0}$.
\end{proof}
\subsubsection{The case without mutations}
In this section, we briefly discuss the case where $\theta_0=\theta_1=0$, i.e. when there are no mutations.
In this setting, under Assumption \ref{eq:non_increasing}, we have $\pf^{\downarrow}(x)\leq x$. Thus, Proposition \ref{lemma:smartingales} implies that $\wZ$ is a bounded supermartingale. Therefore, $\wZ_n$ converges a.s. and in $L^p$ as $n\to\infty$ to a limit $\wZ_\infty$. Note also that $\pf^{\downarrow}$ satisfies the hypotheses of Proposition \ref{lemma:aslimit}, and thus $\wZ_\infty \in \{0,1\}$ almost surely; alternatively, under Assumption  \ref{eq:non_explosion}, this also follows from the duality relation (Thm. \ref{th:moment_duality_II}) and the positive recurrence of $\wM$ (Prop. \ref{pr-abs}). From the biological point of view, this means that there will be fixation of one of the two types almost surely. Moreover, using again that $\pf^{\downarrow}(x) \le x$, we deduce from Lemma \ref{lemma:comparison_theorem} that we can couple the chain $\wZ$ and the neutral chain $Z$ starting from $\wZ_0=Z_0=z \in [0,1]$ so that $\wZ_n \le Z_n$ for every $n \ge 0$. As a consequence,
\begin{align*}
\prob (\wZ_\infty=0) \ge \prob (Z_\infty=0)= 1-z.
\end{align*}
This already implies that if the population starts with an arbitrary, but positive fraction of type $1$ individuals, then the probability that type $1$ fixates (and hence, type $0$ gets extinct) in the population is strictly positive. The next result states that under Assumption \ref{eq:non_explosion} this also holds for type $0$.
\begin{proposition}
Assume that \ref{eq:non_increasing} and \ref{eq:non_explosion} hold. 
Then, we have for all $z\in(0,1)$
$$\Pb_z(\wZ_\infty=1)=1-\Pb_z(\wZ_\infty=0)=\E[z^{\wM_\infty}]\in(0,1),$$
where $\wM_\infty$ is a random variable distributed as the stationary distribution of $\wM$.
\end{proposition}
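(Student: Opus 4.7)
The plan is to pass to the limit $n\to\infty$ in the moment duality (Theorem \ref{th:moment_duality_II}) on both sides, for an arbitrary integer $m\geq 1$. Since $\E_z[(\wZ_n)^m]=\E_m[z^{\wM_n}]$ holds for every $n$, it suffices to compute the two limits separately and identify them.

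On the forward side, the discussion immediately preceding the statement (Lemma \ref{lemma:smartingales} combined with Proposition \ref{lemma:aslimit}) gives $\wZ_n\to\wZ_\infty$ almost surely with $\wZ_\infty\in\{0,1\}$. Therefore $(\wZ_n)^m\to\mathbf{1}_{\{\wZ_\infty=1\}}$ almost surely, and bounded convergence yields
\[
\lim_{n\to\infty}\E_z[(\wZ_n)^m]=\Pb_z(\wZ_\infty=1).
\]

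On the dual side, Proposition \ref{pr-abs}(1) ensures that $\wM$ is positive recurrent on $\N$ with a unique invariant distribution $\pi$. Aperiodicity of $\wM$ is immediate from the explicit form of the transitions, since the $m\to m$ probability contains the strictly positive term $\lambda_{m,0}=\delta^{-1}\int_0^\delta(1-x)^m\,\dd x$. Hence $\wM_n$ converges to $\wM_\infty\sim\pi$ in total variation, uniformly in the initial state $m$; because $k\mapsto z^k$ is bounded on $\N$, this gives $\E_m[z^{\wM_n}]\to \E[z^{\wM_\infty}]$. Identifying the two limits in the duality produces the claimed equality $\Pb_z(\wZ_\infty=1)=\E[z^{\wM_\infty}]$.

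For the strict bounds, the support of $\pi$ equals $\N$, so $\wM_\infty\geq 1$ almost surely, which immediately yields $\E[z^{\wM_\infty}]\leq z<1$; positivity follows from $\E[z^{\wM_\infty}]\geq z\,\pi(\{1\})>0$. The only point that requires some care is the justification of the total variation convergence on the dual side (needed to promote the plain positive recurrence of Proposition \ref{pr-abs}(1) to an actual limit for $\E_m[z^{\wM_n}]$), but this is standard once aperiodicity has been observed via $\lambda_{m,0}>0$. Apart from that, the argument reduces to a clean two-sided passage to the limit in the duality identity.
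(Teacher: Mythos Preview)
Your proof is correct and follows essentially the same approach as the paper: both pass to the limit $n\to\infty$ in the moment duality, using almost sure convergence of $\wZ_n$ to a $\{0,1\}$-valued limit on the forward side and ergodicity of the positive recurrent, irreducible, aperiodic chain $\wM$ on the dual side, with the strict bounds coming from $\pi(\{1\})>0$ and $\wM_\infty\geq 1$. Your write-up is slightly more explicit than the paper's in justifying aperiodicity via $\lambda_{m,0}>0$ and in spelling out the bounded convergence on the forward side, but the argument is the same.
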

\begin{proof}
 Recall from Proposition \ref{pr-abs} that under Assumptions \ref{eq:non_increasing} and \ref{eq:non_explosion} $\wM$ is positive recurrent and has a unique stationary distribution $\hat{\pi}$. Since, in addition, $\wM$ is irreducible and aperiodic, $\wM_n$ converges in distribution, independently of the initial condition, to a random variable $\wM_\infty\sim\hat{\pi}$. Thus, letting $n\to\infty$ in the duality relation in Theorem \ref{th:moment_duality_II}, we deduce that
$$\Pb_z[\wZ_\infty=1]=\E[z^{\wM_\infty}],\quad z\in[0,1].$$
The result follows noting that for $z \in (0,1)$: 
\begin{align*}
0<\Pb[\wM_\infty=1]z\leq \sum_{k \ge 1} \Pb[\wM_\infty=k]z^{k}=\E[z^{\wM_\infty}]< 1.
\end{align*}
The first inequality is due to the first point at Proposition \ref{pr-abs}, the second inequality is obvious, the equality is just a definition, and  the last inequality is again a consequence of Proposition \ref{pr-abs}. 
\end{proof} 

\subsubsection{The case with mutations and no selection}
Let us now consider the case with mutations to type $0$ and $1$, i.e. when $\theta_0,\theta_1>0$. The aim of this section is to describe the long-term behavior of the process $\wZ$. As in the case without mutations, the moment duality in Theorem \ref{th:moment_duality_II} will play an important role.

Recall that, thanks to Proposition  \ref{pr-abs}, $\wM$  almost surely gets absorbed at $\{0,\partial\}$ in finite time. Let us then define for $r \in  \overline{\N}_0$
\begin{align*}
\tau_r\coloneqq\inf\{k\ge 0: \wM_k=r\}.  
\end{align*}
The next result shows that in the presence of bidirectional mutations the process $\wZ$ is ergodic, and characterizes the moments of its stationary distribution.
\begin{proposition}\label{nomut}
Assume that \ref{eq:non_increasing} and \ref{eq:non_explosion} hold. 
Then, the $\wZ_n$ converges in distribution as $n\to\infty$, independently of the initial condition, to a random variable $\wZ_\infty$ whose moments are given by
$$\E[(\wZ_\infty)^m]=\Pb_m(\tau_0<\tau_\partial),\quad m\in \overline{\N}_0.$$
\end{proposition}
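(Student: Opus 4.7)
The plan is to exploit the moment duality from Theorem \ref{th:moment_duality_II} together with the absorption property of $\wM$ given by Proposition \ref{pr-abs}(2). For each fixed $m \in \overline{\N}_0$ and $z \in [0,1]$, duality yields $\E_z[(\wZ_n)^m] = \E_m[z^{\wM_n}]$, so the asymptotic behaviour of the left-hand side is controlled by that of $z^{\wM_n}$, which is where the absorption of $\wM$ becomes decisive.

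Under Assumptions \ref{eq:non_increasing} and \ref{eq:non_explosion}, Proposition \ref{pr-abs}(2) guarantees that $\wM_n$ hits $\{0,\partial\}$ in finite time almost surely, and since both states are absorbing the sequence $(\wM_n)_{n\geq 0}$ converges $\Pb_m$-a.s. to some random variable $\wM_\infty \in \{0,\partial\}$ with $\Pb_m(\wM_\infty=0) = \Pb_m(\tau_0 < \tau_\partial)$. Since $|z^{\wM_n}| \leq 1$ (recalling the convention $z^\partial = 0$), the bounded convergence theorem gives
\begin{equation*}
\lim_{n\to\infty}\E_m[z^{\wM_n}] \;=\; \E_m[z^{\wM_\infty}] \;=\; z^{0}\,\Pb_m(\tau_0<\tau_\partial) + 0\cdot \Pb_m(\tau_\partial<\tau_0) \;=\; \Pb_m(\tau_0<\tau_\partial).
\end{equation*}
Reading this back through the duality identity, for every $m \in \N$ and every $z\in[0,1]$,
\begin{equation*}
\lim_{n\to\infty}\E_z[(\wZ_n)^m] \;=\; \Pb_m(\tau_0<\tau_\partial),
\end{equation*}
a quantity that does not depend on $z$.

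To conclude, I would invoke the standard fact that convergence of all moments of a sequence of $[0,1]$-valued random variables forces convergence in distribution (the Hausdorff moment problem is determinate, or equivalently, Weierstrass approximation allows one to pass from polynomials to arbitrary continuous test functions on $[0,1]$). Applied to $(\wZ_n)_{n\geq 0}$ this produces a limit law $\wZ_\infty$ whose $m$-th moment equals $\Pb_m(\tau_0<\tau_\partial)$, with the independence from the initial condition built in because the limit of moments computed above did not involve $z$. The only mildly delicate point is handling the cemetery state $\partial$ consistently under duality, which the convention $z^\partial=0$ fixed in the statement of Theorem \ref{th:moment_duality_II} takes care of automatically; the rest is a direct assembly of previously established pieces.
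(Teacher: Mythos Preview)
Your proof is correct and follows essentially the same approach as the paper: apply the moment duality of Theorem \ref{th:moment_duality_II}, use the almost-sure absorption of $\wM$ in $\{0,\partial\}$ from Proposition \ref{pr-abs}(2) together with bounded convergence to pass to the limit, and conclude via the determinacy of the moment problem on $[0,1]$. The paper's proof is terser but the ideas are identical.
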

\begin{proof}
Recall that under the assumptions of the proposition, the process $\wM$ almost surely gets absorbed at $\{0,\partial\}$ in finite time. Hence, letting $n\to\infty$ in the duality relation in Theorem \ref{th:moment_duality_II} we deduce that, for any $z\in [0,1]$
\begin{align*}\label{eq:lim_moments}
\lim_{n\to\infty}\E_z[(\wZ_n)^m]=\Pb_m(\tau_0<\tau_\partial).
\end{align*}
Note that for any $n\in\mathbb{N}_0$ and $z\in[0,1]$, the sequence 
$(\E_z[(\wZ_n)^m])_{m\ge 0}$ consists of the moments of a $[0,1]$-valued 
random variable, and is therefore completely monotone. Since 
$(\Pb_m(\tau_0<\tau_\partial))_{m\ge 0}$ arises as the pointwise limit of 
this sequence as $n\to\infty$, it is also completely monotone. Moreover, 
because $\Pb_0(\tau_0<\tau_\partial)=1$, the Hausdorff moment problem (see \cite{hau23}) implies 
that $(\Pb_m(\tau_0<\tau_\partial))_{m\ge 0}$ is the moment sequence of a
$[0,1]$-valued random variable. Thus, the result follows recalling that probability measures on $[0,1]$ are completely determined by their positive integer moments and that convergence of positive integer moments implies convergence in distribution. 
\end{proof}

 \begin{remark}[The case without selection and small $\delta$]\label{rem:moment_nutation} Assume now that $\sigma_i=0$ for $i\in\N_0$. Observe that by Proposition \ref{nomut}, $\wZ$ admits a unique invariant distribution. Denote by
 $\beta_{\theta_0, \theta_1; \delta}$ this invariant law; this time we make explicit the dependency on $\delta$, as we are interested in this law for small values of $\delta$. Denote the $m$-th moment of $\beta_{\theta_0, \theta_1;\delta}$ by $\mom{\beta_{\theta_0, \theta_1;\delta}}{m} $. According to Proposition \ref{nomut}
\begin{align*}
\mom{\beta_{\theta_0, \theta_1;\delta}}{m}= \prob_m (\tau_0 < \tau_\partial).
\end{align*}
Clearly, $\mom{\beta_{\theta_0, \theta_1;\delta}}{0}=1$. A short calculation shows as well that $\mom{\beta_{\theta_0, \theta_1;\delta}}{1}= \theta_0/\theta$, where $\theta=\theta_0+\theta_1$. More generally, a first-step decomposition yields for $n \ge 2$:
\begin{align*}
\mom{\beta_{\theta_0, \theta_1;\delta}}{m} &= \sum_{j=0}^{m} \prob_{m} (\tau_0 < \tau_\partial\vert \wM_1=j) \prob_m (\wM_1=j).
\end{align*}
Now, recall the definition of the $\lambda_{m,k}$ in \eqref{eq:lambdas}. A straightforward computation yields:
\begin{align*}
\lambda_{m,0}     &= 1-\dfrac{m}{2} \delta +\dfrac{m(m-1)}{6}\delta^{2} +o(\delta^{2}),\\
m\lambda_{m,1}   &= \dfrac{m}{2} \delta - \dfrac{m(m-1)}{3}\delta^{2}+ o(\delta^{2}),\\
\frac{m(m-1)}{2}\lambda_{m,2}   &= \dfrac{m(m-1)}{6} \delta^2 + o(\delta^2), \\
\lambda_{m, k} &= o (\delta^2), \quad k > 2.
\end{align*}
Thus, after inspection of the transition rates of $\wM$ in \eqref{ratedual}, we get
\begin{align*}
    \prob_m (\wM_1=m) &= 1- \left( \frac{m(m-1)}{6} + \frac{2\theta m}{6} \right)\delta^{2} +o(\delta^{2}) \\
    \prob_m (\wM_1=m-1)&= \left( \frac{m(m-1)}{6} + \frac{2\theta_0 m}{6} \right)\delta^{2} + o(\delta^{2})\\
    \prob_m (\wM_1=m-k)&=o (\delta^2).
\end{align*}
Hence, 
 \begin{align*}
 \mom{\beta_{\theta_0, \theta_1;\delta}}{m} &= 
 \mom{\beta_{\theta_0, \theta_1;\delta}}{m-1} \prob_{m} (\wM_1=m-1) +\mom{\beta_{\theta_0, \theta_1;\delta}}{m} \prob_m (\wM_1=m)  + o (\delta^2).
\end{align*}
and so: 
\begin{align*}
 \mom{\beta_{\theta_0, \theta_1;\delta}}{m} = \left ( \dfrac{m-1+2\theta_0}{m-1+2\theta}\right) \mom{\beta_{\theta_0, \theta_1;\delta}}{m-1} + o(1).
\end{align*}
Let $\beta_{\theta_0, \theta_1}$ be the $\betadist (2 \theta_0, 2 \theta_1)$ distribution. It is well known that the moments of $\beta_{\theta_0, \theta_1}$ satisfy the recurrence relation
\begin{align*}
 \mom{\beta_{\theta_0, \theta_1}}{m} = \left ( \dfrac{m-1+2\theta_0}{m-1+2\theta}\right) \mom{\beta_{\theta_0, \theta_1}}{m-1}.
\end{align*}
Since this recurrence relation determines the moments and 
$\beta_{\theta_0,\theta_1}$ is supported on $[0,1]$, then $\beta_{\theta_0,\theta_1}$ is the unique 
distribution satisfying the relation (see, e.g. \cite{ethierkurtz86}, 
Theorem~3.4.5). By these recurrences, the moments of 
$\beta_{\theta_0,\theta_1;\delta}$ converge, as $\delta \to 0$, to those of 
$\beta_{\theta_0,\theta_1}$. Because the convergence of moments for 
$[0,1]$-valued random variables implies convergence in distribution, we 
conclude that $\beta_{\theta_0,\theta_1;\delta}$ converges 
to $\beta_{\theta_0,\theta_1}$ as $\delta \to 0$.

\end{remark}
\subsection{Scaling limit}\label{sec:connection-WF}

In this section, we provide the proof of Theorem~\ref{thm:connection-WF}, which establishes the convergence of the Markov chain $Z^{(\delta_N,\pfn)}$, with $\delta_N$ and $\pfn$ satisfying~\eqref{eq:f_N}, to a Wright--Fisher diffusion with drift as $N \to \infty$ under the appropriate time scaling $t\mapsto\lfloor Nt\rfloor$. 
\begin{proof}[Proof of Theorem \ref{thm:connection-WF}]
Pathwise uniqueness for SDE \eqref{eq:WF-Phi-Psi} follows from \cite[Chap. 5, Thm. 3.8]{ethierkurtz86} (see also \cite[Chap IX, Thm. 3.5]{revuzyor1999}).
Existence of weak solutions for any initial condition follows from \cite[Chap. 5, Thm. 3.10]{ethierkurtz86}. Existence of strong solutions follows then from the theorem of Yamada-Watanabe. Furthermore, a standard application of It\^o's formula, shows that $X$ solves the martingale problem associated to the operator $\mathcal{L}:\mathcal{C}^{2} ([0,1]) \mapsto \mathcal{C} ([0,1])$ acting on $h \in \mathcal{C}^2 ([0,1])$ via 
\begin{align*}
 \mathcal{L} h (x) = b(x)\dfrac{\dd }{\dd x} h (x) + \dfrac{1}{2} x(1-x) \dfrac{{\dd}^2}{\dd x^2} h (x).
\end{align*}
We deduce from \cite[Chap. 8, Thm 2.1]{ethierkurtz86} that $X$ induces a Feller semigroup with infinitesimal generator $\mathcal{L}$ and $\mathcal{C}^\infty([0,1])$ as a core.
Therefore, according to \cite[Thm. 17.28]{kallenberg2002} it suffices to show that for every $f\in \mathcal{C}^{\infty}([0,1])$
$$\lim_{N\to\infty}\sup_{x\in[0,1]}\lvert N\E_x[f(Z_1^{(\delta_N,\pfn)})-f(x)]-\mathcal{L}(x)\rvert=0.$$
Setting for $k\in\N$  
\begin{align*}
	a^{(k)}_N (x)\coloneqq N \E^{(N)}_x [(Z_1^{(\delta_N,\pfn)}-x)^k],
\end{align*}
and using a Taylor expansion of $f$, we see that it suffices to show that
$$a_N^{(1)}(x)\to b(x),\quad a_N^{(2)}(x)\to x(1-x),\quad a_N^{(3)}(x)\to 0$$
as $N\to\infty$ uniformly in $x\in[0,1]$.
A  straightforward calculation shows that
\begin{align*} 
a^{(k)}_N (x) = \dfrac{3^{k/2}N^{1-k/2}}{(k+1)} \left ( (-1)^{k}(1-\pfn(x))x^{k}+ \pfn(x) (1-x)^{k}\right).
\end{align*}
Using this for $k=1$, $k=2$ and $k=3$ we see that
\begin{align*}
a^{(1)}_N(x) &= b (x),\quad a^{(2)}_N(x)=x(1-x)+O (N^{-1/2}),\quad a^{(3)}(x)=O(N^{-1/2}),
\end{align*}
where the error terms in the second and third identities are uniform in $x$. The result follows. 
\end{proof}

\begin{remark}[Fittest-type-wins revisited]\label{ftw-rem}
Note that choosing $b:[0,1]\to\R$ as
\begin{align*}
b(x) \coloneqq -\theta_1 x + \theta_0(1-x)-x(1-x)\sum_{j=0}^\infty \sigma_j x^j  ,\quad x\in[0,1],
\end{align*}    
and setting $\delta_N\coloneqq\sqrt{3/N}$, yields $\pfn=\pf_{\delta_N}^{\downarrow}$ (see Section \ref{ss:duality_fittest_type_wins} to recall the definition of $\pf_\delta^{\downarrow}$). Assume that Assumptions \ref{eq:non_increasing} and \ref{eq:non_explosion} are satisfied.
Then, the process $Z^{(\delta_N,\pfn)}$ admits a moment dual $M^{(\delta_N,\pfn)}$ (see Section \ref{ss:duality_fittest_type_wins} for the transitions).
According to Theorem \ref{thm:connection-WF} the process $(Z^{(N,\pfn)}_{\lfloor Nt\rfloor})_{t \geq 0}$ converges as $N\to\infty$ to the solution of the SDE
\begin{equation*}
        \dd X_t = \bigg(-\theta_1 X_t + \theta_0(1-X_t) -X_t(1-X_t)\sum_{j=0}^\infty \sigma_j X_t^j \bigg)\, \dd t + \sqrt{X_t(1-X_t)}\dd W_t,
    \end{equation*}
which is referred in the literature as Wright--Fisher diffusion with fittest-type-wins selection and bi-directional mutation.
An asymptotic analysis of the transition probabilities of the dual process $M^{(\delta_N,\pfn)}$ (similar to the one performed in the proof of Theorem \ref{thm:connection-WF}) allows one to show that 
$({M}^{(\delta_N,\pfn)}_{\lfloor N t\rfloor})_{t\geq 0}$ converges as $N\to\infty$ in distribution to the continuous-time Markov chain with generator acting on $h:\overline{\N}_0\to\R$ with $h(\partial)=0$ via
\begin{align*}\label{eq:generator}
\mathcal{\hat{L}} h (n) =  \bigg(\binom{n}{2}+n\theta_0\bigg) (h(n-1)-h(n)) + n \sigma_0 \sum_{j \ge 1 } \rho_j (h (n+j)-h(n))-n\theta_1 h(n).
\end{align*}
This limiting branching-coalescing process can be stochastically dominated by a branching process transitioning from $n\to n+j$ at rate $n\sigma_0\rho_j$, $j\geq 1$, which is non-explosive under Assumption \ref{eq:non_explosion}. Hence, our limiting process is also non-explosive (see \cite[Chap. V, Thm. 9.1]{Harris63}). In particular, the moment duality stated in Theorem \ref{momdi2} is preserved in the limit, and we recover the moment duality stated in \cite{GS18} (see also \cite{BEH23}).
\qed
\end{remark}

\section{Non-linear chains} \label{section:deme_level}

In this section we introduce the notion of \emph{non-linear Markov chain} (see \cite{kolokoltsov2010nonlinear} for a detailed exposition) and consider a particular class constructed on the basis of Wright--Fisher kernels; we will refer to them as \emph{non-linear Wright--Fisher chains}. We show that these chains admit a unique invariant distribution, which arises as their limit when time tends to infinity (see Section \ref{ss:ergodic}). Section \ref{section:MVWF} is devoted to the study of its scaling limits. More precisely, we construct, via an appropriate scaling of time and of parameters, a sequence of non-linear Wright--Fisher chains converging, as the scaling parameter tends to infinity, to a McKean--Vlasov diffusion. 

The results presented in this section generalize some of the results obtained in Section \ref{sec:connection-WF} to the non-linear setting. Moreover, as we will see in Section \ref{sec:prop_chaos_results}, the non-linear Wright--Fisher chains introduced in this section approximate the evolution of the type composition of pathogens within a single host, which interacts in a mean-field fashion with a myriad of (identically distributed) hosts. Therefore, we can see this section as a step towards the modeling of a two-level population (i.e. of the population at the deme level).

\subsection{Non-linear Markov chains and their invariant distributions}\label{ss:non_linear_markov}
 Roughly speaking, a non-linear Markov chain is a sequence of random variables where the transition to the next state does not only depend on the current state, but also on its distribution. This concept bears resemblance to the notion of \emph{non-linear Markov process}, a term initially introduced by McKean in the context of non-linear parabolic partial differential equations. For a detailed exposition on this topic, we refer to the seminal article by McKean \cite{mckean66}. 
\begin{definition}[Non-linear Markov chain]
	Let $(E, \mathcal{E})$ be a measurable space, $\mathbb {Q}\coloneqq(Q_\nu)_{\nu \in \mathcal{P}(E)}$ be a family of Markov transition operators on $(E, \mathcal{E})$ and $\mu_0 \in \mathcal{P}(E)$. We say that a sequence of random variables $(\nl Y_n)_{n \ge 0}$ on $(E, \mathcal{P}(E))$ is a \emph{non-linear Markov chain driven by $(\mu_0, \mathbb{Q})$} if $\nl{Y_0}\sim\mu_0$ and, for all $n\geq 1$, 
	\begin{equation}\label{eq:def-non-linear-MC}
	    \nl Y_{n} \sim Q_{\mu_{n-1}} (\nl Y_{n-1}, \cdot),\quad\textrm{where} \quad \mu_{n-1}\coloneqq \Law (\nl Y_{n-1}).
	\end{equation}
 If the initial distribution is not specified we say that the non-linear Markov chain is driven by $\mathbb{Q}$.
\end{definition}

Note that the one-dimensional distributions $(\mu_n)_{n\geq 0}$ of the non-linear chain $(\nl Y_n)_{n \ge 0}$ can be computed via the recursion 
\begin{align*}
\quad \mu_{n+1}&= \mu_n Q_{\mu_n},\quad n\geq 0. 
\end{align*}
More generally, the finite dimensional distributions can be computed in a recursive way as follows. Let $\vec{\mu}_{n}$ denote the law of $(\nl Y_0,\ldots,\nl Y_n)$, $n\geq 0$. Then, for any $n\geq 0$ and any bounded measurable function $f:E^{n+1}\to\R$, we have 
$$\vec{\mu}_{n+1}f=\int_{E^n}\vec{\mu}_{n}(\dd x_0,\ldots,\dd x_n)\int_E Q_{\mu_n}(x_n,\dd x_{n+1})f(x_0,\ldots,x_n,x_{n+1}).$$

In particular, if there is a transition kernel $Q$ on $(E, \mathcal{E})$ such that $Q_\nu=Q$ for all $\nu \in \mathcal{P}(E)$, then $(\nl Y_n)_{n\geq 0}$ is a Markov chain in the traditional sense. However, this is not true in general. Indeed, if $\mathcal{B}_b$ is the set of real-valued, bounded Borel measurable functions, the operators $T_n:\mathcal{B}_b \mapsto  \mathcal{B}_b$, $n\geq0$, defined through $T_nf(x) \coloneqq \mathbb{E}_x [f(\nl Y_n)]$, do not satisfy the semigroup property, i.e. $T_{n+m}$ is not necessarily equal to $T_n\circ T_m$. Nevertheless, if for $\mu \in \mathcal{P}(E)$ we denote $\mu \mathbb{Q}^{(n)}$ as the law of $\nl Y_n$ when $\nl Y_0$ has law $\mu$ (with the conventions $\mathbb{Q}=\mathbb{Q}^{(1)}$ and $\mathbb{Q}^{(0)}$ is the identity on $\mathcal{P}(E)$), then the semigroup property holds in the sense that $(\mu \mathbb{Q}^{(n)}) \mathbb{Q}^{(m)}=\mu \mathbb{Q}^{(n+m)}$. This motivates the following notion.

\begin{definition}[Invariant distributions]
	An invariant distribution for $\mathbb {Q}\coloneqq(Q_\nu)_{\nu \in \mathcal{P}(E)}$, or equivalently for the non-linear Markov chain driven by $\mathbb {Q}$, is a probability distribution $\eta\in\mathcal{P}(E)$ satisfying $\eta \mathbb{Q}= \eta$, where $\eta \mathbb{Q}:= \eta Q_\eta$. 
\end{definition}
\begin{remark}[Non-linear chains are linear under invariant distributions]\label{nlvsli}
Note that if $\eta$ is an invariant distribution for $\mathbb {Q}\coloneqq(Q_\nu)_{\nu \in \mathcal{P}(E)}$, then $\eta$ is an invariant distribution for the Markov chain driven by the kernel $Q_\eta$. Moreover, the non-linear Markov chain driven by $(\eta,\mathbb {Q})$ and the Markov chain driven by the the kernel $Q_\eta$ and initial distribution $\eta$ are equally distributed.
\end{remark}

The following result about existence and uniqueness of invariant distributions for non-linear Markov chains will be useful in the forthcoming sections.  

\begin{proposition} \label{th:existence}
	Assume that $(E, d)$ is a compact metric space. Let $\mathbb {Q}\coloneqq(Q_\nu)_{\nu \in \mathcal{P}(E)}$ be a family of Markov transition operators on $(E, \mathcal{B}(E, d))$. Assume that for every $\nu \in \mathcal{P}(E)$ there exists a unique stationary distribution $T \nu \in \mathcal{P}(E)$ for the kernel $Q_\nu$. Assume further that the map $\nu \mapsto T \nu$ is a $\Was$-strict contraction (i.e. it is a $\Was$-Lipschitz map with Lipschitz constant smaller than $1$).  Then, $\mathbb{Q}$ admits a unique invariant distribution.
\end{proposition}

\begin{proof}
	Since $(\mathcal{P}(E), \Was)$ is complete and $T$ is a contraction, by Banach's fixed point theorem there exists a unique element $\eta \in \mathcal{P}(E)$ such that $T \eta=\eta$. Note that by definition of $\eta \mathbb{Q}$ we have
 \begin{align*}
		\eta \mathbb{Q}&= \eta Q_\eta = (T\eta)Q_\eta
		= T \eta= \eta.
	\end{align*}
Hence, $\eta$ is invariant for the non-linear chain driven by $\mathbb{Q}$. Moreover, if $\eta_1$ is another invariant distribution for the non-linear chain, then $\eta_1 =\eta_1 \mathbb{Q}= \eta_1 Q_{\eta_1}$. Hence, $\eta_1$ is a fixed point of $T$. From the uniqueness of fixed points for $T$ we conclude that $\eta_1=\eta$. 
\end{proof}

\subsection{Ergodic properties of the Non-linear Wright--Fisher chains}\label{ss:ergodic}

In this section we introduce the non-linear Wright--Fisher chains and prove their ergodicity (Theorem \ref{th:non_linear_ergodic}). Before proceeding to the proof of this result, we say some words regarding its motivation and relevance. First, we remark the fact that, in general, it is far from trivial to guarantee that a given McKean-Vlasov equation admits unique invariant distribution, even in very simple cases (see e.g. \cite{HERRMANN20101215}). Then, even if one is able to guarantee the existence and uniqueness of an invariant distribution, the standard machinery of the ergodic theory of Markov process is not immediately available to  prove the attraction of any initial distribution to the invariant law, let alone to estimate the rates of this convergence. Now, as we will see in section \ref{section:MVWF} below, for a class of McKean-Vlasov equations  inspired by population genetic models, the non-linear chains we are dealing with in this section (after some scaling procedure) are good proxies of the diffusion models, and in fact, as we will see in Section \ref{sec:case_study}, the convergence rates of these non-linear chains (given by Theorem \ref{th:non_linear_ergodic}) can be lifted up to the limiting model. 

The non-linear chains we are going to deal with are constructed on the basis of the two-parameters family of kernels $(P_{\delta, p})_{\delta,p \in [0,1]}$ on $[0.1]$ introduced above. 

\begin{definition}[Non-linear Wright--Fisher chains]\label{nonlinearWF} 
Let $\NLpf: [0,1] \times \mathcal{P} \mapsto [0,1]$ be a measurable function and $\delta>0$. For $x \in [0,1]$ and $\nu \in \mathcal{P}$, set  $$Q_\nu^{(\delta,\NLpf)} (x, \cdot)\coloneqq P_{\delta, \NLpf(x, \nu)}(x, \cdot)\quad\textrm{and}\quad\mathbb{Q}^{(\delta,\NLpf)}\coloneqq (Q_\nu^{(\delta,\NLpf)})_{\nu \in \mathcal{P}}.$$
We call the non-linear Markov chain $\nl Z^{(\delta,\NLpf)}$ driven by $\mathbb{Q}^{(\delta,\NLpf)}$ a \emph{non-linear Wright--Fisher chain}.
\end{definition}
Since in this section $\delta>0$ will remain fix, we will write $\mathbb{Q}^{(\NLpf)}$ and $\nl Z^{(\NLpf)}$ instead of $\mathbb{Q}^{(\delta,\NLpf)}$ and $\nl Z^{(\delta,\NLpf)}$. Whenever the dependency in $\delta$ is relevant, we will make it explicit again. In  what follows we will work with a special class of functions $\NLpf$.


The next result paves the way to the existence and uniqueness of invariant distributions for the non-linear dynamics. 
\begin{lemma}\label{lemma:unique_invariant} Let $\NLpf: [0,1] \times \mathcal{P} \mapsto [0,1]$ be a $(L_1,L_2)$-Lipschitz function with $L_1<1$ (recall Definition \ref{ass:Lipschitz}). Then the following assertions hold.
	\begin{enumerate}
		\item For every $\nu \in \mathcal{P}$, $Q_\nu^{(\NLpf)}$ admits a unique invariant distribution $T\nu$.
		\item The map $\nu\mapsto T\nu$ is $\dfrac{L_2}{1-L_1}$-Lipschitz.
	\end{enumerate}
\end{lemma}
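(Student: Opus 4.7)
The plan is to reduce both parts to Banach-type contractions in $(\mathcal{P},\Was)$, combining the linear contraction estimate in Proposition \ref{lemma:contraction}(1) with the Lipschitz hypotheses on $\NLpf$ encoded in Definition \ref{ass:Lipschitz}.

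For part (1), the strategy is to show that for each fixed $\nu\in\mathcal{P}$ the map $\mu\mapsto \mu Q_\nu^{(\NLpf)}$ is a strict $\Was$-contraction, and then invoke Banach's fixed point theorem. Given $\mu_1,\mu_2\in\mathcal{P}$, I take an optimal $\Was$-coupling $(X_0,Y_0)$ of them and apply the ``standard coupling'' of Remark \ref{stcoupling} (relying on Lemma \ref{uple}) with parameters $p\coloneqq\pf_\nu(X_0)$ and $q\coloneqq\pf_\nu(Y_0)$, assuming (WLOG) $p\leq q$; inequality \eqref{eq:conditional-expected-value-difference-updates}, taken conditionally on $(X_0,Y_0)$, gives
\begin{align*}
\E[|X_1-Y_1|\mid X_0,Y_0]\leq |X_0-Y_0|\bigl(1-\tfrac{\delta}{2}\bigr)+\tfrac{\delta}{2}|\pf_\nu(X_0)-\pf_\nu(Y_0)|.
\end{align*}
Since $\pf_\nu$ is $L_1$-Lipschitz, taking expectations yields
\begin{align*}
\Was(\mu_1 Q_\nu^{(\NLpf)},\mu_2 Q_\nu^{(\NLpf)})\leq \bigl(1-\tfrac{(1-L_1)\delta}{2}\bigr)\Was(\mu_1,\mu_2),
\end{align*}
which is a strict contraction because $L_1<1$ and $\delta>0$. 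Since $(\mathcal{P},\Was)$ is complete, Banach's theorem supplies the unique fixed point $T\nu$.

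For part (2), the idea is a standard perturbation-of-fixed-points argument. Fix $\nu_1,\nu_2\in\mathcal{P}$ and write $\eta_i\coloneqq T\nu_i=\eta_i Q_{\nu_i}^{(\NLpf)}$. The triangle inequality gives
\begin{align*}
\Was(\eta_1,\eta_2)\leq \Was(\eta_1 Q_{\nu_1}^{(\NLpf)},\eta_2 Q_{\nu_1}^{(\NLpf)})+\Was(\eta_2 Q_{\nu_1}^{(\NLpf)},\eta_2 Q_{\nu_2}^{(\NLpf)}).
\end{align*}
The first term is bounded by the contraction from part (1). For the second, I couple by starting from the \emph{same} point $X_0=Y_0\sim \eta_2$ and using Lemma \ref{uple} with $p\coloneqq\NLpf(X_0,\nu_1)$, $q\coloneqq\NLpf(X_0,\nu_2)$; then $|X_0-Y_0|=0$ and $|p-q|\leq L_2\Was(\nu_1,\nu_2)$ by the $(L_1,L_2)$-Lipschitz property, so \eqref{eq:conditional-expected-value-difference-updates} gives the bound $\tfrac{\delta L_2}{2}\Was(\nu_1,\nu_2)$. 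Combining both estimates and rearranging yields
\begin{align*}
\tfrac{(1-L_1)\delta}{2}\Was(\eta_1,\eta_2)\leq \tfrac{\delta L_2}{2}\Was(\nu_1,\nu_2),
\end{align*}
which is exactly the announced Lipschitz bound $L_2/(1-L_1)$.

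No step is really hard here: the only delicate point is bookkeeping in the coupling argument, namely making sure that one systematically uses Lemma \ref{uple}(3) with the correct choice of $(p,q)$ (taking $p$ and $q$ depending on the pair $(X_0,Y_0)$ for part (1), but fixing $X_0=Y_0$ and letting $p,q$ depend only on the measure argument for part (2)). Everything else is a direct application of the already established contraction estimates and the Lipschitz hypothesis on $\NLpf$.
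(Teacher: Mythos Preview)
Your proof is correct and follows essentially the same approach as the paper: both arguments hinge on the coupling of Lemma \ref{uple}/Remark \ref{stcoupling} to produce the one-step contraction estimate with factor $1-\tfrac{\delta(1-L_1)}{2}$, and then extract existence, uniqueness, and the Lipschitz bound from it. The only cosmetic difference is in part (2): the paper iterates the combined inequality \eqref{eq:bound-for-iterarion-in-wasserstein} with $\mu^0=T\mu$, $\mu^*=T\nu$ and lets $n\to\infty$, whereas you use the equivalent one-step fixed-point perturbation (triangle inequality plus rearrangement); both routes yield the same constant $L_2/(1-L_1)$.
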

The proof of this lemma is based on the estimate \eqref{eq:pre_bound_00} for the standard coupling (see Remark \ref{stcoupling}) and we postpone it to Appendix \ref{app:to:ss:ergodic}.

Besides the previous Lemma, the proof of Theorem \ref{th:non_linear_ergodic} requires an extension of the standard coupling introduced in Remark \ref{stcoupling}. To do this, let $\NLpf,\NLqf:[0,1]\times \Ps\to[0,1]$ be measurable functions and $\mu,\nu\in\Ps$. Assume that $\left(\nl Z^{(\NLpf)}_0, \nl Z^{(\NLqf)}_0\right)$ is any coupling of $\mu$ and $\nu$, for example the coupling that attains the  Wasserstein distance. Denote by $\mu_n^{(\NLpf)}$ and $\mu_n^{(\NLqf)}$ the laws of 
$\nl Z^{(\NLpf)}_n$ and $\nl Z^{(\NLqf)}_n$, respectively (their definition doesn't depend on the realization of the processes). Assume now that we have already coupled the chains up to generation $n$ and let $U,U_1,U_2\sim\unif$ be independent between them, and independent from the two processes up to generation $n$. Set $p\coloneqq\pf(\nl Z^{(\NLpf)}_n,\mu_n^{(\NLpf)})$ and $q\coloneqq\pf(\nl Z^{(\NLqf)}_n,\mu_n^{(\NLqf)})$ and define

\begin{equation}\label{nlstcoupling}
\left\{
\begin{aligned}
\nl Z^{(\NLpf)}_{n+1}&=\upx_{p,q}(\nl Z^{(\NLpf)}_n,U,U_1,U_2)\quad\textrm{and}\quad\nl Z^{(\NLqf)}_{n+1}=\upy_{q}(\nl Z^{(\NLqf)}_n,U,U_1), \text{ if } p\leq q  \\ \\
\nl Z^{(\NLqf)}_{n+1}&=\upx_{q,p}(\nl Z^{(\NLqf)}_n,U,U_1,U_2)\quad\textrm{and}\quad\nl Z^{(\NLpf)}_{n+1}=\upy_{p}(\nl Z^{(\NLpf)}_n,U,U_1),  \text{ otherwise. }
\end{aligned}
\right.
\end{equation}
Thanks to part 3 of Lemma \ref{uple}, the processes $\nl Z^{(\NLpf)}$ and $\nl Z^{(\NLqf)}$ constructed by this procedure have the desired distributions and satisfy, for all $n\geq 0$, 
	\begin{align} \label{eq:nlpre_bound_00}
		\E[\vert\nl Z^{(\NLpf)}_{n+1}-\nl Z^{(\NLqf)}_{n+1}\vert] & \le \Bigg (1-\dfrac{\delta}{2}\Bigg) \E[\vert\nl Z^{(\NLpf)}_{n}-\nl Z^{(\NLqf)}_{n} \vert ]+\dfrac{\delta}{2}\E[\vert \NLpf(\nl Z^{(\NLpf)}_{n},\mu_n^{(\NLpf)})-\NLqf(\nl Z^{(\NLqf)}_{n},\mu_n^{(\NLqf)})\vert].
	\end{align}

\begin{proof}[Proof of Theorem \ref{th:non_linear_ergodic}]
The existence and uniqueness of the invariant distribution $\eta$ follows from  
	Lemma \ref{lemma:unique_invariant} and Proposition \ref{th:existence}. To prove the bound \eqref{eq:convergence_rate}, and hence the convergence towards the invariant distribution, we use a coupling argument. More precisely, we consider two copies $\nl Z^{(\NLpf)}$ and $\widetilde{Z}^{(\NLpf)}$ constructed via the non-linear standard coupling (with $\NLpf=\NLqf$), the first one with initial distribution $\mu^0$ and the second one with initial distribution $\eta$. Moreover, we assume that the initial values are optimally coupled (with respect to the Wasserstein distance). Since, $\NLpf$ is $(L_1,L_2)$-Lipschitz, Eq. \eqref{eq:nlpre_bound_00} delivers
 	\begin{align*} 
		\E[\vert\nl Z^{(\NLpf)}_{n+1}-\widetilde{Z}^{(\NLpf)}_{n+1}] & \le \Bigg (1-\dfrac{\delta(1-L_1)}{2}\Bigg) \E[\vert\nl Z^{(\NLpf)}_{n}-\widetilde{Z}^{(\NLpf)}_{n} \vert ]+\dfrac{\delta L_2}{2}\Was(\mu_n^{(\NLpf)},\eta)\\
  &\le \Bigg (1-\dfrac{\delta(1-L_1-L_2)}{2}\Bigg) \E[\vert\nl Z^{(\NLpf)}_{n}-\widetilde{Z}^{(\NLpf)}_{n} \vert ].	
  \end{align*}
  Iterating this inequality yields Eq. \eqref{eq:convergence_rate}.
  For the last claim, recall that $T \eta$ is the invariant distribution of the kernel $P_{\delta, \bmath{\bar{s}}}$ with $\bmath{\bar{s}}=\NLpf(0,\eta)$, and hence, by Theorem \ref{cor:unique}, $T\eta=\beta_{\delta,\bmath{\bar{s}}}$. 
 But according to Proposition \ref{th:existence}, $\eta$ is the unique solution of the equation $T\eta=\eta$. This implies that $\bmath{\bar{s}}=\NLpf(0,\beta_{\delta,\bmath{\bar{s}}})$. The uniqueness of $\bmath{\bar{s}}$ follows from Proposition \ref{prop:distance-invartiant}.
\end{proof}

\begin{remark}
Theorem \ref{th:non_linear_ergodic} is similar to the results in \cite[Section 2]{butkovsky2014}, but the results therein are stated in total variation distance instead of Wasserstein distance (other works addressing the long-term behavior of non-linear Markov processes are \cite{neumann2023nonlinear} and \cite{saburov2016ergodicity}).
\end{remark}

\subsection{Scaling limits: the McKean--Vlasov Wright--Fisher process}\label{section:MVWF}

In Section~\ref{sec:connection-WF}, we proved that, as $\delta \to 0$, Markov chains based on Wright--Fisher kernels of the form $P_{\delta,\pf_\delta}$ converge to a broad class of Wright--Fisher diffusions. The structure of these processes allowed us to apply well-known results on the weak convergence of Markov chains to diffusion processes. Our goal here is to prove Theorem~\ref{th:convergence_theorem}, which establishes the convergence of the non-linear chains introduced in the previous section to the McKean--Vlasov Wright--Fisher diffusion. In this setting, the techniques used for classical Markov processes are no longer directly applicable. Instead, our approach relies on convergence results for semimartingales together with uniqueness for an associated non-linear martingale problem.

The remainder of the section is organized as follows. We begin by proving Theorem~\ref{th:convergence_theorem} based on three auxiliary propositions, each addressing a key component of the argument: tightness, characterization of accumulation points, and the connection between uniqueness of solutions to the McKean--Vlasov diffusion and an associated martingale problem. We then establish these auxiliary propositions in separate subsections.

\subsubsection{The skeleton of the proof}\label{ss:convergence}
In what follows we will consider a function $\NLb:[0,1] \times \mathcal{P} \mapsto \mathbb{R}$ satisfying Assumption \ref{ass:on_b}.
As announced in display \eqref{scale-setting} in Section \ref{sec:overview-results}, we will consider the following scaling of parameters:
\begin{align*}
\delta_N= \sqrt{\frac{3}{N}}\quad\textrm{and}\quad	\NLpfn(x, \mu) = x + \frac{2}{\sqrt{3N}}\NLb(x,\mu).
\end{align*} 
Under Assumption \ref{ass:on_b}, the function $\NLpfn(x, \mu)$ is $(1+2L_1/\sqrt{3N},2L_2/\sqrt{3N})$-Lipschitz, and there exists $N_1>0$, such that, for any $N\geq N_1$ and any $(x,\mu)\in[0,1] \times \mathcal{P} ([0,1])$, $\NLpfn(x, \mu)\in[0,1]$.

We now consider the non-linear chain $\nlZn$ with $N \ge N_1$. Our goal is to
prove Theorem~\ref{th:convergence_theorem}; that is, we show that for any
$T \ge 0$ the sequence of semimartingales $(\sNLchainN_{[0,T]} : N \ge N_1)$,
defined in~\eqref{def:systems} by
\(
\sNLchainN_t = \nlZn_{\lfloor Nt \rfloor}, \, t \ge 0,
\)
converges to the unique solution of a McKean--Vlasov equation. More precisely,
we prove that the laws of these rescaled processes, viewed as probability
measures on the Skorokhod space equipped with the $J_1$ topology, converge weakly
to the unique solution of a non-linear martingale problem, which coincides with
the unique solution of the McKean--Vlasov diffusion~\eqref{eq:MV}. By a
\emph{solution} of this equation we mean the following.

\begin{definition}\label{def:weak_solution}
We say that the MV equation \eqref{eq:MV} has a weak solution on $[0, T]$ (resp. $[0, \infty)$) if there exists a filtered probability space $(\Omega, \FF, (\FF_t), \prob)$, an $\FF_t$-standard brownian motion and a continuous $[0,1]$-valued $\FF_t$-adapted process such that $\prob$-a.s., \eqref{eq:MV} holds on $t \in [0, T]$ (resp. $t \ge 0$). We say that weak uniqueness hold on $[0, T]$ (resp. $[0, \infty)$) if any two weak solutions induce the same law on $\mathcal{P} (\mathcal{C}([0, T]; [0,1]))$ (resp. $\mathcal{P} (\mathcal{C}([0, \infty); [0,1]))$).
\end{definition}

The proof of Theorem \ref{th:convergence_theorem} is presented in three steps, each of them encapsulated in a Proposition. First we state the tightness of the sequence of laws of $(\sNLchainN)_{N \ge 1}$ (Proposition \ref{prop:tightness-scaled-non-linear-markov-chain}), then we identify the limits points of this sequence as solutions of a \emph{nonlinear martingale problem}, which we introduce below  (Proposition \ref{prop:limits-of-non-linear-markov-chain-solve-NLMP}). Finally, we prove the uniqueness of the this limit (Proposition \ref{prop:uniqueness_NLMP}). The proofs of these propositions are deferred to the following subsections.

\begin{proposition}[Tightness]\label{prop:tightness-scaled-non-linear-markov-chain} Under Assumption \ref{ass:on_b}, the sequence of laws $(P^{(N)})_N$ of the semimartingales $(\sNLchainN)_N$, as probability measures on the Skorokhod space $\mathbb{D}([0, +\infty); [0,1])$, is a tight sequence. 
\end{proposition}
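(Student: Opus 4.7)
The plan is to verify Aldous' tightness criterion for $(\sNLchainN)_N$ on $\mathbb{D}([0,T];[0,1])$ for every $T>0$, from which tightness in $\mathbb{D}([0,+\infty);[0,1])$ follows in a standard way. Because the state space $[0,1]$ is compact, compact containment is automatic and the whole burden reduces to controlling the increments $\sNLchainN_{\sigma+\delta}-\sNLchainN_\sigma$ uniformly in $N$ and in bounded stopping times $\sigma$, as $\delta\downarrow 0$. To exploit the semimartingale structure, I would begin by writing the Doob decomposition $\sNLchainN_t=\sNLchainN_0+A^N_t+M^N_t$ with
\begin{align*}
A^N_t\coloneqq\sum_{k=0}^{\lfloor Nt\rfloor-1}\E\bigl[\nlZn_{k+1}-\nlZn_k\mid\mathcal{G}^N_k\bigr],\qquad \mathcal{G}^N_k\coloneqq\sigma(\nlZn_j:\,j\le k),
\end{align*}
and computing the one-step conditional moments. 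Using the explicit form of $P_{\delta_N,\NLpfn(x,\mu)}$ together with the scaling $\delta_N=\sqrt{3/N}$ and $\NLpfn-\mathrm{id}=\tfrac{2}{\sqrt{3N}}\NLb$, a direct calculation yields
\begin{align*}
\E\bigl[\nlZn_{k+1}-\nlZn_k\mid\mathcal{G}^N_k\bigr]=\tfrac{1}{N}\NLb\bigl(\nlZn_k,\mu_k^N\bigr),\qquad \E\bigl[(\nlZn_{k+1}-\nlZn_k)^2\mid\mathcal{G}^N_k\bigr]\le\tfrac{1}{N},
\end{align*}
where $\mu_k^N\coloneqq\Law(\nlZn_k)$.

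For the drift, since $\NLb$ is $(L_1,L_2)$-Lipschitz on the compact set $[0,1]\times(\mathcal{P},\Was)$ it is bounded by some constant $C$, and the first estimate above yields the deterministic modulus $|A^N_{t+\delta}-A^N_t|\le C(\delta+1/N)$ for all $t,\delta\ge 0$; this gives tightness of $(A^N)$ immediately. For the martingale part, the second-moment estimate produces the bound $\langle M^N\rangle_{t+\delta}-\langle M^N\rangle_t\le \delta+1/N$ on the predictable quadratic variation; hence, for any stopping time $\sigma\le T$, the $L^2$-isometry together with Markov's inequality deliver
\begin{align*}
\Pb\bigl(|M^N_{\sigma+\delta}-M^N_\sigma|>\eta\bigr)\le \eta^{-2}\,\E\bigl[\langle M^N\rangle_{\sigma+\delta}-\langle M^N\rangle_\sigma\bigr]\le \eta^{-2}(\delta+1/N),
\end{align*}
which verifies Aldous' condition for $(M^N)$. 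Combining both controls with the trivial tightness of $(\sNLchainN_0)$ on the compact interval $[0,1]$ yields tightness of $(\sNLchainN)$ in $\mathbb{D}([0,T];[0,1])$ for every $T>0$.

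The main (though rather mild) technical point is the computation of the one-step conditional moments and, more importantly, to notice that the specific scaling of $\delta_N$ and $\NLpfn$ is calibrated precisely so that the drift per step is of order $1/N$ and the quadratic-variation increment is of the same order; only with this balance do the cumulative drift and martingale variance on a time interval of $\lfloor Nt\rfloor$ steps remain of order $t$, which is exactly what Aldous' criterion requires. No new argument is needed beyond this; the heavy lifting (identification of the limit and uniqueness) is deferred to Propositions \ref{prop:limits-of-non-linear-markov-chain-solve-NLMP} and \ref{prop:uniqueness_NLMP}.
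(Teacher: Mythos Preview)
Your proposal is correct and follows essentially the same route as the paper: both compute the Doob decomposition, observe that the one-step conditional mean is $\tfrac{1}{N}\NLb(\cdot,\cdot)$ and the one-step second moment is $O(1/N)$, and then verify Aldous' criterion via the resulting bounds on the compensator and predictable quadratic variation. The paper packages this as the Rebolledo--Aldous criterion (checking conditions $[\mathbf{T_1}]$ and $[\mathbf{A}]$ for $\tilde A^{(N)}$ and $\langle M^{(N)}\rangle$), whereas you apply Aldous' condition directly to $A^N$ and $M^N$ separately; the underlying estimates are identical.
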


To identify the limits of the sequence  $(P^{(N)})_N$, we introduce 
a \textit{nonlinear martingale problem.} 

Consider the space $\Omega= (\mathbb{D} ([0, +\infty); [0,1]), \mathcal{B}  (\mathbb{D} ([0, +\infty); [0,1])))$, where the underlying topology is the $J_1$-Skorokhod topology, and endow it with the canonical filtration $(\FF_t)_{t \ge 0}$. For $\mu \in \mathcal{P}$ and $\varphi$ regular enough we put:
    \begin{align*}
    L_\mu \phi (x):= \NLb(x, \mu) g' (x)+\frac{1}{2}x(1-x) g'' (x).
    \end{align*}
The \emph{non-linear} martingale problem reads as follows:
\smallskip
	\begin{center}
	\begin{minipage}{0.9 \textwidth}{\it
Find $\mu \in \mathcal{P} (\mathbb{D} ([0, +\infty); [0,1]))$ endowed with its Borel $\sigma$-field and the canonical filtration, such that, with $\dif_t (\omega)= \omega_t$ the canonical process, for every $g \in \mathcal{C}^{\infty} ([0,1])$, the process:
\begin{equation} \label{eq:NLMP}
    \tag{NLMP}
    \begin{aligned}
	M^{(g)}_t: =&\; g (\dif_t)- g (\dif_0)- \int_0^t L_{\mu_s}g(\dif_s)\dd s, \quad (t \ge 0)
    \end{aligned}
\end{equation}
is a $\FF_t$-martingale under $\mu$ and, moreover, $\mu_0= \overline \mu_0$. }
\end{minipage}
\end{center}
An application of It\^o's formula shows that  $\nlLaw$, the law of $\nlDif$ solution of \eqref{eq:MV}, solves this nonlinear martingale problem.
\medskip

\begin{proposition}[Characterization] \label{prop:limits-of-non-linear-markov-chain-solve-NLMP}
	Under Assumption \ref{ass:on_b}, any weak limit point $P$ of $(P^{(N)})_{N \ge 1}$ solves the non-linear martingale problem \eqref{eq:NLMP}.
\end{proposition}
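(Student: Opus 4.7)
The plan is to verify the defining martingale property of \eqref{eq:NLMP} under $P$. Fix $g\in\mathcal{C}^\infty([0,1])$, $0\le s<t$, an integer $k\ge 1$, times $0\le s_1<\cdots<s_k\le s$, and a bounded continuous $\phi:[0,1]^k\to\R$. For any $Q\in\mathcal{P}(\mathbb{D}([0,\infty);[0,1]))$ with time-$u$ marginal $Q_u$, introduce
$$\Phi(Q;\omega) := \phi(\omega_{s_1},\ldots,\omega_{s_k})\Bigl[g(\omega_t)-g(\omega_s)-\int_s^t L_{Q_u}g(\omega_u)\,du\Bigr].$$
It then suffices to show $\E_P[\Phi(P;\cdot)]=0$. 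The strategy decomposes into three steps: (i) show $\E_{P^{(N)}}[\Phi(P^{(N)};\cdot)]\to 0$ by identifying a discrete quasi-martingale; (ii) replace $P^{(N)}$ by $P$ \emph{inside} $\Phi$; and (iii) replace $P^{(N)}$ by $P$ \emph{outside} $\Phi$ via weak convergence.

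For (i), let $\nu^{(N)}_n:=\Law(\nlZn_n)$ and Taylor-expand $g$ to third order around $\nlZn_n$. Combining the explicit transition law of $\nlZn$ with $\delta_N^2=3/N$ and $\NLpfn(x,\mu)-x=\tfrac{2}{\sqrt{3N}}\NLb(x,\mu)$, a direct computation gives
$$\E\bigl[g(\nlZn_{n+1})-g(\nlZn_n)\mid\FF_n\bigr]=\frac{1}{N}\,L_{\nu^{(N)}_n}g(\nlZn_n)+R_n^{(N)},$$
with $\|R_n^{(N)}\|_\infty=O(N^{-3/2})$ uniformly in $n$ (using boundedness of $\NLb$ and $g^{(j)}$ for $j\le 3$). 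Telescoping, the process $g(\sNLchainN_t)-g(\sNLchainN_0)-\tfrac{1}{N}\sum_{k=0}^{\lfloor Nt\rfloor-1}L_{\nu^{(N)}_k}g(\nlZn_k)$ is a martingale under $P^{(N)}$ up to an additive error of size $O(N^{-1/2})$. The Riemann sum differs from $\int_0^t L_{P^{(N)}_u}g(\sNLchainN_u)\,du$ by a further $O(N^{-1/2})$ in supremum norm, since $\nlZn$ varies by at most $\delta_N$ per step, $u\mapsto P^{(N)}_u$ jumps by at most $\Was(\nu^{(N)}_n,\nu^{(N)}_{n+1})\le\delta_N/2$, and $\NLb$ is Lipschitz in both arguments. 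This yields $\E_{P^{(N)}}[\Phi(P^{(N)};\cdot)]=o(1)$.

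For (ii), $L_2$-Lipschitz continuity of $\NLb(x,\cdot)$ in $\Was$ together with $\|g'\|_\infty<\infty$ give the pointwise bound
$$\bigl|\Phi(P^{(N)};\omega)-\Phi(P;\omega)\bigr|\le \|\phi\|_\infty\,\|g'\|_\infty\,L_2\int_s^t\Was(P^{(N)}_u,P_u)\,du.$$
Proposition \ref{prop:tightness-scaled-non-linear-markov-chain} provides tightness on $\mathbb{D}$, and the elementary observation $|\sNLchainN_u-\sNLchainN_{u-}|\le\delta_N\to 0$ forces any weak limit $P$ to charge only $\mathcal{C}([0,\infty);[0,1])$. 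Consequently, $P^{(N)}_u\to P_u$ weakly for every $u$, and since $[0,1]$ is compact, also in $\Was$; bounded convergence then delivers $\E_{P^{(N)}}[|\Phi(P^{(N)};\cdot)-\Phi(P;\cdot)|]\to 0$. For (iii), the functional $\omega\mapsto\Phi(P;\omega)$ is bounded, and is continuous at every $\omega\in\mathcal{C}([0,\infty);[0,1])$ (the projections $p_u$ and the inner integral both inherit continuity from that of $\omega$). Since $P$ is concentrated on continuous paths, weak convergence yields $\E_{P^{(N)}}[\Phi(P;\cdot)]\to\E_P[\Phi(P;\cdot)]$. Combining (i)–(iii) gives $\E_P[\Phi(P;\cdot)]=0$.

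The main obstacle is precisely the double role played by $P^{(N)}$: it is simultaneously the law on path space and the source of the generator's measure-dependent drift. Disentangling the two instances depends crucially on the Lipschitz dependence of $\NLb$ on $\mu$ and on the concentration of any weak limit $P$ on continuous paths; this latter property, not automatically provided by tightness on $\mathbb{D}$, follows from the uniform $O(N^{-1/2})$ bound on the jumps of $\sNLchainN$.
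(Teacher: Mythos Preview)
Your proof is correct and follows essentially the same approach as the paper's: a three-term decomposition exploiting that $P$ is concentrated on continuous paths (from the $O(N^{-1/2})$ jump bound), the Lipschitz dependence of $\NLb$ on the measure variable to swap $P^{(N)}_u$ for $P_u$ inside the generator, and the $P$-a.s.\ continuity of the functional to pass to the limit outside. The only organizational difference is that the paper routes the argument through an intermediate discrete generator $L^{(N)}_{P_{k/N}}$ (separating ``measure replacement'' from ``generator convergence''), whereas you work with the continuous generator $L_\mu$ throughout and compare the telescoping sum directly to the time integral; since $\sNLchainN$ and $P^{(N)}_\cdot$ are both piecewise constant on $[k/N,(k+1)/N)$, your Riemann-sum error is in fact $O(1/N)$ rather than $O(N^{-1/2})$, but either suffices.
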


\begin{proposition}[Uniqueness]\label{prop:uniqueness_NLMP}
Under Assumption \ref{ass:on_b}, if the McKean--Vlasov stochastic differential equation \eqref{eq:MV} admits a unique weak solution $\nl X$, the \eqref{eq:NLMP} has a unique solution in $\mathcal{P}(\mathcal{C}([0, +\infty); [0,1]))$ (which is, of course, the law of the process $\nl X$).
\end{proposition}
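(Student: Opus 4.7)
The plan is to show that any solution of the non-linear martingale problem gives rise to a weak solution of the McKean--Vlasov SDE \eqref{eq:MV}, and then to conclude by the assumed weak uniqueness.

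First, let $\mu \in \mathcal{P}(\mathcal{C}([0,+\infty);[0,1]))$ be an arbitrary solution of \eqref{eq:NLMP}, and denote by $(\mu_t)_{t\ge 0}$ its one-dimensional marginals, $\mu_t \coloneqq \mu \circ \dif_t^{-1}$. Comparing \eqref{eq:NLMP} and \eqref{eq:FROZEN_MP}, one observes that $\mu$ is automatically a solution of the frozen martingale problem \eqref{eq:FROZEN_MP} associated to the flow $(\mu_t)_{t\ge 0}$: indeed, $L_{\mu_s}g(x) = L^{(\mu)}_s g(x)$ for every $g \in \mathcal{C}^{\infty}([0,1])$ and every $s\ge 0$, by the very definition of $b^{(\mu)}$.

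Second, recall from the discussion preceding Theorem \ref{th:convergence_theorem} that, under Assumption \ref{ass:on_b}, the linearized SDE \eqref{eq:linearized-sde} with initial law $\overline{\mu}_0$ admits a unique strong solution $\dif^{(\mu)}$; by the equivalence between well-posedness of the SDE and well-posedness of the associated martingale problem (see \cite[Chap.\ 5, Prop.\ 4.11 and Cor.\ 4.8]{karatzas91}), \eqref{eq:FROZEN_MP} admits a unique solution, which must therefore coincide with $\Law(\dif^{(\mu)})$. Combined with the previous paragraph, this yields $\mu = \Law(\dif^{(\mu)})$. Invoking once again the martingale-problem/SDE correspondence (possibly after enlarging the probability space to support an auxiliary Brownian motion), one produces a standard Brownian motion $W$ such that the canonical process satisfies
$$\dd \dif_t = \NLb(\dif_t, \mu_t)\,\dd t + \sqrt{\dif_t(1-\dif_t)}\,\dd W_t, \quad \dif_0 \sim \overline{\mu}_0.$$
But by construction $\mu_t = \Law(\dif_t)$ under $\mu$, so the canonical process is in fact a \emph{weak solution} of the McKean--Vlasov SDE \eqref{eq:MV} in the sense of Definition \ref{def:weak_solution}. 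The assumed weak uniqueness of \eqref{eq:MV} then forces the induced law on $\mathcal{C}([0,+\infty);[0,1])$ to equal $\Law(\nl X)$, hence $\mu = \Law(\nl X)$.

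The main delicate point is the second step: passing from the frozen martingale problem to an honest weak solution of the linearized SDE requires a martingale-representation argument to exhibit the driving Brownian motion on a possibly enlarged probability space. This is standard since the canonical process lives on $\mathcal{C}([0,+\infty);[0,1])$ by hypothesis and the diffusion coefficient $x \mapsto \sqrt{x(1-x)}$ is continuous and bounded on $[0,1]$, so the classical constructions of Stroock--Varadhan apply without modification.
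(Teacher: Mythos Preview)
Your proof is correct and takes essentially the same approach as the paper: freeze the marginal flow of an arbitrary solution of \eqref{eq:NLMP}, use well-posedness of the linearized problem to identify it with the law of the solution of \eqref{eq:linearized-sde}, and then conclude via the assumed weak uniqueness of \eqref{eq:MV}. The only cosmetic difference is that the paper routes the argument through the non-linear Fokker--Planck equation before invoking the frozen problem, whereas you work directly with \eqref{eq:FROZEN_MP}; your version is slightly more direct.
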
 
Now we are in position to prove the main result of this section. 
\begin{proof}[Proof of Theorem \ref{th:convergence_theorem}] As announced in the beginning of this section, by Proposition \ref{prop:tightness-scaled-non-linear-markov-chain}, there exist limit points for the sequence $(\Law (\nlZn))_{N \ge 1}$. Moreover, any of these limit points solves the non-linear martingale problem thanks to Proposition \ref{prop:limits-of-non-linear-markov-chain-solve-NLMP}. Nevertheless, according to Proposition \ref{prop:uniqueness_NLMP}, the non-linear martingale problem has a unique solution; which is in fact, the unique solution of the McKean--Vlasov equation. Summarizing, the the sequence $(\Law (\nlZn))_{N \ge 1}$ converges to the law of the solution of \eqref{eq:MV}. 
\end{proof}

\subsubsection{Tightness}\label{app:proofs-convergence-to-WFMV}
To prove the result, we rely on the well-known Rebolledo-Aldous criterion, which we recall next. We say that a family of processes $(V^{(N)}_t: t \ge 0)_{N \ge 1}$ satisfy the tightness condition $\bf [T_1]$ if for every $t \ge 0$ the family $(\Law (V^{(N)}_t))_{N \ge 1} $ is tight. We say that a family of processes $(V^{(N)}_t: t \ge 0)_{N \ge 1}$ ($V^{(N)}$ being a $\FF^{(N)}$-adapted process), satisfy the Aldous' condition $\bf {[A]}$ if for each $C > 0, \varepsilon > 0, \eta > 0$ there exists $\delta > 0, N' > 0$ such that for any sequence of stopping times bounded by $C$, $(\tau^{(N)})_{N \ge 1}$ ($\tau^{(N)}$ being a $\FF^{(N)}$-stopping time)  we have:
\begin{align*}
\sup_{N \ge N'} \sup_{\theta < \delta}\prob [\vert V^{(N)}_{\tau^{(N)}}- V^{(N)}_{\tau^{(N)}+ \theta}\vert > \eta ] < \varepsilon.
\end{align*}

The Rebolledo-Aldous criterion states that if for a sequence of semimartingales in $\mathbb{D}([0, \infty); [0,1])$, both, the sequence of predictable processes and the sequence of quadratic predictable processes associated to the martingale parts, satisfy the conditions $\bf[T_1]$ and $\bf [A]$, then the sequence of semimartingales is a tight sequence on $\mathbb{D}$ (see e.g. Corollary 2.3.3 at \cite{joffe1986}).

\begin{proof}[Proof of Proposition \ref{prop:tightness-scaled-non-linear-markov-chain}]
Let us start by computing the characteristics of $\nlZn$. Denote $\nl{\mu}^{(N)}_n= \Law (\nlZn_n)$, and:	
	\begin{align*}
		A^{(N)}_n= \dfrac{\sqrt {3}}{2N^{1/2}} \sum_{k=0}^{n-1} (\NLpfn (\nlZn_k, \nl{\mu}^{(N)}_k)-\nlZn_k), 
	\end{align*}
	\begin{align*}
		M^{(N)}_n =\nlZn_n -\nlZn_0 - A_n^{(N)}.
	\end{align*}
	It is straightforward to show that $M^{(N)}$ is a square-integrable martingale, with quadratic predictable process:
	\begin{align*}
		\langle M^{(N)} \rangle_n & = \dfrac{3}{N}\sum_{k=0}^{n-1} \Bigg ( \dfrac{1}{3} (\nlZn_k)^{2} (1-\NLpfn (\nlZn_k, \nl{\mu}^{(N)}_k)) \\
  & \qquad +\dfrac{1}{3}     (1-\nlZn_k)^2 \NLpfn (\nlZn_k, \nl{\mu}^{(N)}_{k})  - \dfrac{1}{4} (\NLpfn (\nlZn_k, \nl{\mu}^{(N)}_k)-\nlZn_k)^2      \Bigg  ) .  
	\end{align*}
Write $\tilde A_t^{(N)}= A^{(N)}_{\lfloor Nt\rfloor}$ and $\tilde K^{(N)}_t= \langle M^{(N)}\rangle_{\lfloor Nt\rfloor}$. We need to prove that the sequences of processes $(\tilde A^{(N)})_{N \ge 1}$ and $(\tilde K^{(N)}) _{N \ge 1} $ satisfy the conditions $\bf [T_1]$ and $\bf [A]$ above. 

Let's start with the tightness condition. For fixed $t \ge 0$, we have, for every $N \ge 1$:
\begin{align*}
\E [\vert \tilde A^{(N)}_t\vert ] \le \dfrac{1}{N} \E \bigg [\sum_{k=0}^{\lfloor Nt \rfloor -  1} \bigg\vert \NLb (\nlZn_k, \mu^{(N)}_k)\bigg\vert \bigg ] \le C t,
\end{align*}
for some constant $C'$ that depends on the bounds of $\NLb$. Analogously, the terms inside the sum at the definition of $\langle M^{(N)} \rangle$ are bounded, and again, for some constant $C''$, we have $\E [\vert \tilde K^{(N)}_t\vert ] \le C'' t $ for every $N \ge 1$. We deduce the tightness of the laws of the sequences $(\tilde A^{(N)}_t)_{N \ge 1}$ and  $(\tilde K^{(N)}_t)_{N \ge 1}$.

As for the the Aldous' condition, fix $C > 0, \varepsilon> 0, \eta >0$. For any sequence of stopping times $\tau^{(N)}$ and any $\theta> 0$:
\begin{align*}
\bigg \vert \tilde A^{(N)}_{\tau^{(N)}+ \theta}- A^{(N)}_{\tau^{(N)}+ \theta} \bigg \vert =\dfrac{1}{N} \Bigg \vert  \sum_{k= \lfloor N \tau^{(N)} \rfloor }^{\lfloor N \tau^{(N)}+ N\theta\rfloor -1} \NLb (\nlZn_k, \mu^{(N)}_k)\Bigg\vert,
\end{align*}
The sum contains at most $N \theta+1$ terms, uniformly bounded by some constant depending on $\NLb$, say $C'$. Thus:
\begin{align*}
\bigg \vert \tilde A^{(N)}_{\tau^{(N)}+ \theta}- \tilde A^{(N)}_{\tau^{(N)}+ \theta} \bigg \vert\ > \eta \Rightarrow C' (\theta+ 1/N) > \eta,
\end{align*}
and the right hand side fails to hold as soon as $\theta$ is small enough and $N$ is large enough. So, the sequence $(\tilde A^{(N)})_{N \ge 1}$ satisfies the condition $\bf [A]$. A completely analogous reasoning leads to the verification that this condition also holds for $(\tilde K^{(N)})_{N \ge 1}$.
\end{proof}

\subsubsection{Identification of the limit}
We have prove that the sequence of semimartingales $(\sNLchainN_{[0, T]}: N \ge 1)$ is tight. Now we proceed to show that any of its limit points solves the non-linear martingale problem.

\begin{proof}[Proof of Proposition \ref{prop:limits-of-non-linear-markov-chain-solve-NLMP}]

Let $P$ be any limit point of the sequence $(P^{(N)})$; without loss of generality, we can assume that the whole sequence converges weakly to $P$. As above,  $\dif_t(\omega)= \omega_t$ stands for the canonical process on the Skorokhod space. The functional $\bmath{\Psi}: \mathbb{D} ([0,+\infty); [0,1]) \mapsto \overline \R_{+}$ given by $\omega \mapsto \sup_{t \ge 0} \vert \omega_t- \omega_{t-}\vert $ is continuous for the Skorokhod topology, and under $P^{(N)}$, $\bmath{\Psi}(\dif) \le N^{-1/2}$. Thus,  by definition of weak convergence:
\begin{align*}
\int_{\mathbb{D}} \bmath{\Psi} (\omega) P (\dd \omega) = \lim_{N \to \infty} \int_{\mathbb{D}} \bmath{\Psi} (\omega) P^{(N)}(\dd \omega) =0, 
\end{align*}
and consequently, $P (\mathcal{C} ([0, +\infty); [0,1]))=1$. Write $P_s\coloneqq \Law_P (\dif_s)=P \circ \dif_s^{-1} \in \mathcal{P} ([0,1])$, $s \ge 0$. For a fixed test function $\varphi \in \mathcal{C}^{\infty}([0,1])$, define the functional $\mathcal{M}_t: \mathbb{D}([0, +\infty); [0,1]) \mapsto \R$ by:
\begin{align*}
	\mathcal{M}_t (\omega)\coloneqq \varphi (\omega_t)- \varphi (\omega_0) - \int_{0}^{t}L_{P_s} \varphi (\omega_s) \dd s.
\end{align*}
Observe that, although $\omega \mapsto \omega_s$ is not continuous for the Skorokhod topology, it is indeed continuous when restricted to the space of continuous paths. Thus, for each $t \in [0,1]$, $\mathcal{M}_t$ is $P$-a.s. continuous. Let $\varphi_1, \varphi_2, \ldots \varphi_k$ be a family of  continuous functions on $[0,1]$, $0 \le t_1 < t_2 < \ldots < t_k < s < t$. Consider the functional $\Xi: \mathbb{D}([0,+\infty); [0,1]) \mapsto \R_+$:
\begin{align*}
	\Xi (\omega)= \prod_{i=1}^{k}\varphi_k (\omega_{t_k})( \mathcal{M}_t (\omega)-\mathcal{M}_s (\omega)).
\end{align*}
Once again we note that the set of discontinuities of $\Xi$ lies outside the support of $P$.
	
Write $\E$ and $\E^{(N)}$ for expectation with respect to $P$ and $P^{(N)}$, respectively. By weak convergence and the continuous mapping theorem: 
\begin{align} \label{eq:weak_convergence}
		\E [\Xi (\dif)]= \lim_{N} \E^{(N)} [\Xi (\dif)]. 
\end{align}
	Hence, we just need to prove that the sequence $(\E^{(N)} (\Xi (\dif)))_N$ converges to $0$. 
    
    Let $\delta= (3/N)^{1/2}$, and put:
	\begin{align*}
		L^{(N)}_\mu \varphi(x)= (1-\NLpfn (x, \mu)) \E[\varphi(x-x\delta U)-\varphi(x)]+ \NLpfn (x, \mu) \E[\varphi(x+(1-x)\delta U)-\varphi(x)].
	\end{align*}
	Write:
	\begin{align*}
		\mathcal{M}^{(N)}_t &= \varphi (\omega_t)- \varphi (\omega_0)- \sum_{k=0}^{\lfloor Nt \rfloor-1} L^{(N)}_{P^{(N)}_{k/N}} \varphi (\omega_{k/N}); \\
		\mathcal{Q}^{(N)}_t &=\varphi (\omega_t)- \varphi (\omega_0)-  \sum_{k=0}^{\lfloor Nt \rfloor-1} L^{(N)}_{P_{k/N}} \varphi (\omega_{k/N});
	\end{align*}
	and define $\Xi^{(N)}$ and $\tilde \Xi^{(N)}$ as $\Xi$ but with $\mathcal{M}$ replaced by $\mathcal{M}^{(N)}$ and $\mathcal{Q}^{(N)}$, respectively. Clearly:
    \begin{equation}\label{eq:three_term}
	\begin{aligned} 
		\vert\E^{(N)}_{} [\Xi (\dif)]\vert &\le  \vert \E^{(N)}_{} [\Xi^{(N)}_{} (\dif)] +  \E^{(N)}_{}[\vert  \Xi^{(N)}_{}(\dif)-\tilde \Xi^{(N)}_{} (\dif)\vert ]\\
		&\quad+  \E^{(N)}_{} [\vert \tilde \Xi^{(N)}_{}(\dif)-\Xi (\dif)\vert ], 
	\end{aligned}
	\end{equation}
and observe that the first term vanishes since $\mathcal{M}^{(N)}$ is a martingale under $P^{(N)}$. So, we have to show that the remaining terms vanish in the limit. Now, as $N$ goes to infinity, through a Taylor expansion up to the second order it is easily checked that:
	\begin{align*}
		L^{(N)}_\mu\varphi (x) = \dfrac{1}{N} (\NLb(x, \mu)\varphi' (x)+ \dfrac{1}{2N} x(1-x) \varphi'' (x)) + o (1/N),
	\end{align*}
    where $o (1/N)$ is uniform on $x\in[0,1]$. Hence:
    \begin{align*}
	 		\vert \mathcal{M}_t - \mathcal{Q}^{(N)}_t \vert &= \left \vert \int_{0}^t L_{P_s} \varphi (\omega_s)\dd s - \sum_{k=0}^{\lfloor Nt \rfloor -1 } L^{(N)}_{P_{k/N}} \varphi (\omega_{k/N})\right \vert \\
    & \le \left \vert \int_{0}^t L_{P_s} \varphi (\omega_s)\dd s - \dfrac{1}{N}\sum_{k=0}^{\lfloor Nt \rfloor -1 } L_{P_{k/N}} \varphi (\omega_{k/N})\right \vert + o(1).
	 \end{align*}
Under $P^{(N)}$, we a.s. have $\omega_s= \omega_{k/N}$ for $s\in [k/N, (k+1)/N)$. Consequently, under $P^{(N)}$,
\begin{align*}
	 		\vert \mathcal{M}_t - \mathcal{Q}^{(N)}_t \vert &\le \left \vert \int_{0}^t L_{P_s} \varphi (\omega_s)\dd s - \sum_{k=0}^{\lfloor Nt \rfloor -1 } \int_{k/N}^{(k+1)/N}L_{P_{k/N}} \varphi (\omega_{s})\dd s\right \vert + o(1) \\
    & \le \sum_{k=0}^{\lfloor Nt\rfloor-1} \int_{k/N}^{(k+1)/N} \vert L_{P_{k/N}}\varphi (\omega_s)- L_{P_s} \varphi (\omega_s)\vert \dd s + o(1) \\
    & \le C \sum_{k=0}^{\lfloor Nt\rfloor-1} \int_{k/N}^{(k+1)/N} \Was (P_{k/n}, P_s)\dd s + o(1),
\end{align*}
  where the constant $C$ involves the Lipschitz constants of $\NLb$ and bounds on $\varphi'$. Since $s\mapsto P_s$ is continuous, in particular it is uniformly continuous on $[0,t]$. Hence, given $\varepsilon > 0$, there exists $N_0 > 0$ such that for every $N > N_0$, $0 \le k < \lfloor Nt\rfloor$ and $s \in [k/N, (k+1)/N)$, we can ensure $\Was (P_{k/N}, P_s)< \varepsilon$. Then, again under $P^{(N)}$ for $N > N_0$:
  \begin{align*}
	 		\vert \mathcal{M}_t - \mathcal{Q}^{(N)}_t \vert &\le C \varepsilon t + o(1).
	 \end{align*}
  By bounded convergence, we deduce that the third term in \eqref{eq:three_term} converges to $0$ as $N\to\infty$. Similarly, we have
  \begin{align*}
		\vert \mathcal{M}^{(N)}_t - \mathcal{Q}^{(N)}_t \vert & \le \dfrac{1}{N}  \sum_{k=0}^{\lfloor Nt\rfloor - 1}\left \vert L_{P_{k/N}} \varphi (\omega_{k/N})- L_{P^{(N)}_{k/N}} \varphi (\omega_{k/N}) \right \vert + o(1) \\
  & \le C \dfrac{1}{N}  \sum_{k=0}^{\lfloor Nt\rfloor - 1} \Was (P_{k/N}, P^{(N)}_{k/N}) + o(1),
\end{align*}
and, thus, under $P^{(N)}$,
\begin{align*}
		\vert \mathcal{M}^{(N)}_t - \mathcal{Q}^{(N)}_t \vert &\le C  \sum_{k=0}^{\lfloor Nt\rfloor - 1} \int_{k/N}^{(k+1)/N}\Was (P_{k/N}, P_s)\dd s \\
 & \quad  + C  \sum_{k=0}^{\lfloor Nt\rfloor - 1} \int_{k/N}^{(k+1)/N}\Was (P_{s}, P^{(N)}_s)\dd s + o(1) \\
 & = C  \sum_{k=0}^{\lfloor Nt\rfloor - 1} \int_{k/N}^{(k+1)/N}\Was (P_{k/N}, P_s)\dd s \\
 & \quad  + C  \int_{0}^{t}\Was (P_{s}, P^{(N)}_s)\dd s + o(1). 
\end{align*}
The same argument used previously shows that for any $\varepsilon> 0$ there exists $N_0$ large enough such that for $N \ge N_0$, the first sum in the last line above can be made smaller that $C \varepsilon t$. As for the integral term, since $P^{(N)} \to P$ weakly and the map $\omega \mapsto \omega_s$ is $P$-a.s. continuous, we obtain by the continuous mapping theorem that $P^{(N)}_s \to P_s$ weakly; since $P_s, P_s^{(N)}$ are elements of $\mathcal{P} ([0,1])$, we have that $\Was (P_s, P^{(N)}_{s})$ converge to $0$, and we deduce by Lebesgue dominated convergence theorem that the integral in the last line above converges to $0$. This (and a straightforward application of the bounded convergence theorem) shows that the second term in \eqref{eq:three_term} converge to $0$. 
\end{proof}
\subsubsection{Uniqueness of the limit}
Finally, we show here that set of limits points of  \((P^{(N)}: N \ge 1)\) consists in only one element which corresponds to the law of the solution of \eqref{eq:MV}.

\begin{proof}[Proof of Proposition \ref{prop:uniqueness_NLMP}]
Observe that any solution to the non-linear martingale problem solves the non-linear Fokker-Planck equation in the weak form:
\begin{equation} \label{eq:non_Linear_FP}
\begin{aligned}
	\dfrac{\dd}{\dd t} \inner{\mu_t}{\varphi} = \inner{\mu_t}{L_{\mu_t} \varphi}; \quad \mu_0= \overline \mu_0, \quad t \ge 0,
\end{aligned}
\end{equation}
for every $\varphi \in \mathcal{C}^\infty{}([0,1])$. Consequently, the uniqueness of the solution of the non-linear martingale problem will follow as soon as we prove that equation \eqref{eq:non_Linear_FP} admits a unique solution in $\mathcal{P}(\mathcal{C}([0, +\infty); [0,1]))$. The proof of this fact relies on arguments regarding the uniqueness of solutions for the ``linearized'' forms of both, the SDEs and the martingale problem parameterized by flows of probability measures; similar reasoning can be found in the proof of Theorem 2.5 of \cite{bossy2019}, and what follows can be regarded as a streamlined argument. 

Observe that, under our assumptions:
\begin{enumerate}
\item For a given continuous flow of probability measures, there exists a unique strong solution for the ``linearized'' SDE associated to our problem. More precisely, for any measure $ \nu \in \mathcal{P} (\mathcal{C}([0,+\infty] ; [0,1]))$, there exists a unique strong solution of the ``linearized'' equation:
\begin{align}\label{eq:linearized-sde}
\dd X^{(\nu)}_t = \NLb(X_t^{(\nu)}, \nu_t) \dd t + \sqrt{X_t^{(\nu)}(1-X_t^{(\nu)})} \dd W_t; \quad X^{(\mu)}_0 \sim \nu_0,\quad t \ge 0,
\end{align}
where $\nu_t$ stands for the image measure of $\nu$ under the coordinate map $(x_s)_{s \ge 0} \mapsto  x_t$. Indeed, notice that, under Assumption \ref{ass:on_b} the weak existence of the linearized equation is given by Skorokhod-Stroock-Varadhan Theorem (See Theorem 5.4.22 and Remark 5.4.23 in \cite{karatzas91}). Moreover pathwise uniqueness follows from Yamada-Watanabe Theorem (Proposition 5.3.20 and Corollary 5.3.23 in \cite{karatzas91}), hence there exists a unique strong solution for equation \eqref{eq:linearized-sde}. Furthermore, under assumption \ref{ass:on_b}, it is easy to see that such solution remains in $[0,1]$. 

\item For any fixed continuous flow of probability measures, the linearized martingale problem has a unique solution. By the linearized martingale problem we mean the following. For $\nu \in \mathcal{P}(\mathcal{C}([0, +\infty); [0,1]))$, and $x \in [0,1]$ put:
\begin{align*}
	b^{(\nu)} (x, t) \coloneqq \NLb(x, \nu_t),
\end{align*}
and for $t \ge 0$, define the operator $L^{(\nu)}_t$ acting on $\mathcal{C}^{2} ([0,1];\R)$ as:
\begin{align*}
	L^{(\nu)}_t g(x) \coloneqq b^{(\nu)}(x, t) g' (x)+ \dfrac{1}{2} x(1-x) g'' (x).
\end{align*}
\medskip
The linearized martingale problem is posed as follows: 
    \begin{center}
	\begin{minipage}{0.7 \textwidth}
find $\mu \in \mathcal{P} (\mathbb{D} ([0, +\infty); [0,1]))$ such that, $\mu_0= \nu_0$ and with $\dif_t (\omega)= \omega_t$ the canonical process, for every $g \in \mathcal{C}^\infty ([0,1])$, the process:
\begin{align} \label{eq:FROZEN_MP}
    \tag{FMP}
	M^{(\nu;g)}_t: = g (\dif_t)- g (\dif_0)- \int_{0}^{t} L^{(\nu)}_s g (\dif_s) \dd s, \quad (t \ge 0)
\end{align}
is a $\FF_t$-martingale under $\mu$. 
\end{minipage}
\end{center}
\medskip
\end{enumerate}

These conditions tell us that for a fixed flow $(\nu_t)$, there exists a unique solution $\mu^{\nu}$ for:
\begin{align} \label{eq:fixed_non_linear_FP}
	\dfrac{\dd}{\dd t} \inner{\mu_t}{\varphi} = \inner{\mu_t}{L_{\nu_t} \varphi}; \quad \mu_0= \overline \mu_0;
\end{align}
and that there exists a unique strong solution $\difNu$ for the SDE \eqref{eq:linearized-sde} for the flow $(\nu_t)$. Notice that by It\^o's Lemma, the law of $\difNu$ solves \eqref{eq:fixed_non_linear_FP}. Then, by uniqueness, $\mu^{\nu}$ is equal to the law of $\difNu$. 

Let us consider $\bar{\nu}$ any solution of \eqref{eq:NLMP}. Then, inserting $\bar{\nu}$ in \eqref{eq:fixed_non_linear_FP} we obtain $\mu^{\bar\nu}$ the unique solution of: 
\begin{equation}\label{eq:FKwithNonLinearFlow}
\begin{aligned} 
	\dfrac{\dd}{\dd t} \inner{\mu^{\bar\nu}_t}{\varphi} = \inner{\mu^{\bar\nu}_t}{L_{\bar\nu_t} \varphi}; \quad \mu_0= \overline \mu_0,
\end{aligned}    
\end{equation}
and inserting $\bar{\nu}$ in \eqref{eq:linearized-sde} we obtain $X^{\bar\nu}$.
 But $\bar{\nu}$ is also a solution of \eqref{eq:FKwithNonLinearFlow}, so we must have by uniqueness $\bar{\nu} =\mu^{\bar\nu}$, and then $\bar{\nu}$ is equal to the law of $X^{\bar\nu}$. Consequently, $X^{\bar\nu}$ is a solution of \eqref{eq:MV}, which has a unique weak solution; so $\bar{\nu}$ is equal to $\bar{\mu}$, the law of the $\nlDif$ process defined in \eqref{eq:MV}. Hence \eqref{eq:NLMP} has a unique solution.
\end{proof}

\subsubsection{On the existence and uniqueness of solutions for the McKean-Vlasov Wright-Fisher equation}

In Proposition~\ref{prop:limits-of-non-linear-markov-chain-solve-NLMP} we assume existence and uniqueness of solutions (in the sense of Definition~\ref{def:weak_solution}) to the McKean--Vlasov Wright--Fisher equation. In Appendix~\ref{ss:existence_MV}, we establish existence and uniqueness under mild assumptions that are satisfied, for example, in the case of non-linear Wright--Fisher diffusions with pure selection, where the selection rate depends on the instantaneous marginal law. The proof is based on relative entropy estimates between the laws of two solutions of the frozen Wright--Fisher diffusion (as in~\eqref{eq:linearized-sde}), and we believe that it is of independent interest. However, since this result is not essential for the remainder of the paper, and to improve the readability of the main text, we have chosen to present it in a separate section.

For the model studied in Section~\ref{sec:case_study}, existence and uniqueness can be obtained from \cite[Theorem~3.1]{huang2022path}.

\section{The deme level: propagation of chaos}\label{sec:prop_chaos_results}
In this section, we set up the two-level model that describes the evolution of the fractions of allele-$0$ pathogens within individuals across a large host population. As announced in Section~\ref{sec:overview-results}, the main results in this context are Theorems~\ref{propchaos} and~\ref{th:prop_chaos_2}. These theorems are propagation-of-chaos results that compare the pathogen dynamics in the full two-level model with the evolution of pathogens within a single individual governed by a non-linear dynamical system.

\subsection{The two-level model and a coupling for the non-linear dynamics}
We consider a population consisting of $M\in\N$ hosts, each one being infected by a pathogen. As in the one-level dynamics, the pathogen population consists of allele-$0$ and allele-$1$ individuals. In addition to $M$, the other parameters of the model are $\delta\in[0,1]$ and a measurable function $\NLpf:[0,1]\times\Ps\to[0,1]$. To describe the dynamics of the population it is convenient to define $\hat{\mu}_M(\bf z)$, for ${\bf z}\coloneqq(z_1,\ldots,z_M)\in[0,1]^M$, as the empirical measure of the points $z_1,\ldots,z_M$, i.e.
$$\hat{\mu}_M({\bf z})\coloneqq\frac{1}{M}\sum_{j=1}^M\delta_{z_j}.$$

We denote by $\psdi{M}_{n,i}$ the fraction of allele-$0$ pathogens in the $i$-th host, $i\in\{1,\ldots,M\}$, at generation $n\in\N$, and we set $\psd{M}_{n}\coloneqq(\psdi{M}_{n,1},\ldots,\psdi{M}_{n,M})$. The evolution of the system is then encoded by the $[0,1]^{M}$-valued Markov chain $\psd{M} \coloneqq (\psd{M}_{n})_{n \ge 0}$ evolving as follows. If $\psd{M}_n={\bf z}$, then $\psd{M}_{n+1}$ is distributed according to
\begin{align}\label{eq:product}
	P_{\delta,\NLpf(z_1,\hat{\mu}_M({\bf z}))}(z_1,\cdot) \otimes P_{\delta,\NLpf(z_2,\hat{\mu}_M({\bf z}))}(z_2,\cdot)\otimes \ldots\otimes P_{\delta,\NLpf(z_M,\hat{\mu}_M({\bf z}))}(z_M,\cdot) 
\end{align}
Note that the pathogen dynamics is correlated across hosts only through their empirical measure.  
\begin{remark}[A two-level coupling]\label{app:particle-system-coupling}
Let us now couple the pathogen-host system $\psd{M}$ described above to a system of $M$ independent non-linear chains driven by $\mathbb{Q}^{(\delta,\NLpf)}$, which we denote by ${\overline{\mathbf{Z}}}^{(\delta,\NLpf,M)}\coloneqq(\nl Z^{(\delta,\NLpf)}_{n,1},\ldots,\nl Z^{(\delta,\NLpf)}_{n,M})_{n\ge 0}$ .  

To do this, assume that we have constructed both systems up to generation $n$ and consider a collection $\{U^{(j)}, U_1^{(j)}, U_2^{(j)}, \, j \ge 1\}$ of $\unif$ random variables independent between them and independent from the evolution of both systems up to generation $n$. For each $i\in\{1,\ldots,M\}$ set 
$$p_i\coloneqq \NLpf(\psdi{M}_{n,i},\hat{\mu}_M(\psd{M}_n))\quad\textrm{and}\quad q_i\coloneqq \NLpf(\nl Z^{(\delta,\NLpf)}_{n,i},\mu_{n,i}),$$
where $\mu_{n,i}$ is the the law of $\nl Z^{(\delta,\NLpf)}_{n,i}$. Further, if $p_i\leq q_i$, we set
        \begin{align*}
            \psdi{M}_{n+1,i} &= \upx_{p_i,q_i,\delta}(\psdi{M}_{n,i},U^{(i)},U^{(i)}_1,U^{(i)}_2),\\
            \nl Z^{(\delta,\NLpf)}_{n+1,i} &= \upy_{q_i,\delta}(\nl Z^{(\delta,\NLpf)}_{n,i},U^{(i)},U^{(i)}_1),
        \end{align*}
otherwise, we set
        \begin{align*}
          \psdi{M}_{n+1,i} &= \upy_{p_i,\delta}(\psdi{M}_{n,i},U^{(i)},U^{(i)}_1),\\
          \nl Z^{(\delta,\NLpf)}_{n+1,i} &= \upx_{q_i,p_i,\delta}(\nl Z^{(\delta,\NLpf)}_{n,i},U^{(i)},U^{(i)}_1,U^{(i)}_2).
        \end{align*}
According to Lemma \ref{uple}, $(\nl Z^{(\delta,\NLpf)}_{n,i})_{n\ge 0}$ is, for each $i\in\{1,\ldots,M\}$, a non-linear chain driven by $\mathbb{Q}^{(\delta,\NLpf)}$. Moreover, $(\nl Z^{(\delta,\NLpf)}_{n,1})_{n\ge 0},\ldots,(\nl Z^{(\delta,\NLpf)}_{n,M})_{n\ge 0}$ are independent.  It also follows from Lemma \ref{uple} that the system $\psd{M}$ has the desired distribution and that
	\begin{align}\label{colsnl}
		\E \left[\vert \psdi{M}_{n+1,1}-  \nl Z^{(\delta,\NLpf)}_{n+1,1}\vert\right] &\le \bigg (1- \dfrac{\delta}{2} \bigg ) \E\left[\vert \psdi{M}_{n,1}-  \nl Z^{(\delta,\NLpf)}_{n,1}\vert\right]\nonumber\\
  &+ \dfrac{\delta}{2}\E \left[\vert \NLpf(\psdi{M}_{n,1},\hat{\mu}_M(\psd{M}_n))-  \NLpf(\nl Z^{(\delta,\NLpf)}_{n,1},\mu_{n,1})\vert \right].
	\end{align}
\end{remark}
\subsection{Propagation of chaos I}
The aim of this section is to provide a bound for the distance between the law, at generation $n$, of the state of a given particle in the pathogen-host system  and the law of an appropriate non-linear chain. Since in this section the parameter $\delta$ is fixed, we will drop it from the notation. 

A important ingredient in the proof is the following result, which is a direct consequence of \cite[Thm.1, p.709]{fournier2015}. It provides non-asymptotic estimates for the convergence in Wasserstein distance of the empirical laws (and thus, can be regarded as a quantitative version of Glivenko-Cantelli's theorem ). Similar results for the Wasserstein-2 distance in different dimensions can be found in \cite{carmona2019}.  

\begin{proposition}\label{prop:glivenkocantelli}
	There exist a constant $C_0 > 0$ such that, for any sequence of i.i.d. random variables $(\xi_i)_{i \ge 1}$ on $[0,1]$ with common law $\mu \in \mathcal{P}([0,1])$, we have
	\begin{align*}
		\E [\Was (\hat{\mu}_{M}(\mathbf{\xi}), \mu)] \le \frac{C_0}{\sqrt{M}},
	\end{align*} 
    where $\hat{\mu}_{M}(\mathbf{\xi})\coloneqq M^{-1}\sum_{i=1}^M\delta_{\xi_i}$ is the empirical law induced by a sample of size $M$ of the sequence $(\xi_i)_{i \ge 1}$.  
\end{proposition}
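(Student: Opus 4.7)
My plan is to invoke Theorem~1 of \cite{fournier2015} directly, as the authors announce. That theorem gives an explicit non-asymptotic bound for $\E[\Was_p(\hat\mu_N,\mu)]$ when $\mu\in\mathcal P(\mathbb R^d)$ in terms of the dimension $d$, the exponent $p$, an appropriate moment of $\mu$, and a decay function of $N$ whose regime depends on the interplay between $p$, $d$ and the order of the available moment. I would specialize to $p=1$ and $d=1$, and observe that $\mu$ is supported on $[0,1]$, so that every moment is trivially bounded by $1$. In this regime the Fournier--Guillin bound collapses to the rate $N^{-1/2}$ with an absolute constant, which gives the inequality with $C_0$ independent of $\mu$.

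Since the Fournier--Guillin machinery is rather heavy, it is worth noting that a self-contained proof is available in this very special case. The strategy would be to combine three standard ingredients. First, recall the classical one-dimensional representation $\Was(\mu,\nu)=\int_{\mathbb R}|F_\mu(x)-F_\nu(x)|\,\dd x$ in terms of the cumulative distribution functions. Second, let $F$ denote the CDF of $\mu$ and $\hat F_N$ the empirical CDF built from $Z_1,\dots,Z_N$; applying Fubini and Jensen's inequality yields
\begin{equation*}
\E[\Was(\hmu{N}_Z,\mu)]=\int_0^1 \E\big|\hat F_N(x)-F(x)\big|\,\dd x \le \int_0^1 \sqrt{\Var(\hat F_N(x))}\,\dd x.
\end{equation*}
Third, use $N\hat F_N(x)\sim\mathrm{Binomial}(N,F(x))$ to get $\Var(\hat F_N(x))=F(x)(1-F(x))/N\le 1/(4N)$, and conclude with $C_0=1/2$.

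The main obstacle is essentially non-existent: the compactness of $[0,1]$ bypasses the moment-size issues that constitute the core difficulty of \cite{fournier2015}, and the i.i.d.\ assumption makes the variance of $\hat F_N(x)$ immediate. The only point worth verifying in either approach is that the constant obtained is truly independent of $\mu$, a fact made transparent by both routes.
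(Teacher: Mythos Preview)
Your proposal is correct. The paper's own treatment is exactly your first route: it simply states that the result is ``a direct consequence of \cite[Thm.~1, p.~709]{fournier2015}'' and gives no further argument. Your second, self-contained route via the one-dimensional identity $\Was(\mu,\nu)=\int_0^1|F_\mu-F_\nu|$ and the binomial variance bound is a genuine addition: it is considerably more elementary than invoking the full Fournier--Guillin machinery, and it yields the explicit constant $C_0=1/2$, which the paper does not provide. The paper gains nothing by citing the general result here, since as you note the compact support trivializes all the moment conditions that make \cite{fournier2015} delicate; your direct argument is the cleaner choice in this setting.
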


Now we proof Theorem \ref{propchaos}, which state the chaos propagation property for the discrete time chains, meaning the convergence of the truly Markovian particle system to the non linear Markov chain.

\begin{proof}[Proof of Theorem \ref{propchaos}]
	We use the coupling described in Remark \ref{app:particle-system-coupling} with the same initial value for both systems; the distribution of the initial value is $\mu_0^{\otimes M}$. Let $\mu_n$ be the distribution of $\nl Z^{(\NLpf)}_{n,1}$. 
By definition of the $\Was$-distance, we have
$$\Was\Big(\Law (\psodi{M}_{n,1}), \Law (\nl Z^{(\NLpf)}_n )\Big)\le\E \left[\vert \psodi{M}_{n,1}-  \nl Z^{(\NLpf)}_{n,1}\vert\right].$$
Furthermore, since $\NLpf$ is $(L_1,L_2)$-Lipschitz,  Eq. \eqref{colsnl} delivers
	\begin{align}\label{chaos_pre}
		\E \left[\vert \psodi{M}_{n,1}-  \nl Z^{(\NLpf)}_{n,1}\vert\right]\le& \bigg (1- \dfrac{\delta(1-L_1)}{2} \bigg ) \E\left[\vert \psodi{M}_{n-1,1}-  \nl Z^{(\NLpf)}_{n-1,1}\vert\right]\nonumber\\
  &+ \dfrac{\delta L_2}{2}\E \left[\Was (\hat{\mu}_M(\psod{M}_{n-1}), \mu_{n-1}) \right].
	\end{align}
Note now that
	\begin{align*}
		\Was (\hat{\mu}_M(\psod{M}_{n-1}), \mu_{n-1}) \le \Was (\hat{\mu}_M(\psod{M}_{n-1}),\hat{\mu}_M({\overline{\mathbf{Z}}}^{(\NLpf,M)}_{n-1})+ \Was(\hat{\mu}_M({\overline{\mathbf{Z}}}^{(\NLpf,M)}_{n-1}), \mu_{n-1}).
	\end{align*} 
	For the first term, we have
	\begin{align*}
		\Was (\hat{\mu}_M(\psod{M}_{n-1}),\hat{\mu}_M({\overline{\mathbf{Z}}}^{(\NLpf,M)}_{n-1}))
		&\le \dfrac{1}{M} \sum_{j=1}^{M} \vert \psodi{M}_{n-1,j}-  \nl Z^{(\NLpf)}_{n-1,j}\vert.
	\end{align*}
	Moreover, the distribution of $\vert \psodi{M}_{n-1,j}-  \nl Z^{(\NLpf)}_{n-1,j}\vert$ is the same for all $j$. Hence,	
	\begin{align*}
		\E\left[\Was (\hat{\mu}_M(\psod{M}_{n-1}),\hat{\mu}_M({\overline{\mathbf{Z}}}^{(\NLpf,M)}_{n-1})) \right]\le \E\left[\vert \psodi{M}_{n-1,1}-  \nl Z^{(\NLpf)}_{n-1,1}\vert \right].
	\end{align*}
	Moreover, using the independence of the non-linear chains and Proposition \ref{prop:glivenkocantelli}, we obtain
	\begin{align*}
		\E\left[\Was(\hat{\mu}_M({\overline{\mathbf{Z}}}^{(\NLpf,M)}_{n-1}), \mu_{n-1})\right] \le \dfrac{C_0}{\sqrt{M}}.
	\end{align*}
	Plugging the obtained bounds in \eqref{chaos_pre} yields
	\begin{align*}
		\E \left[\vert \psodi{M}_{n,1}-  \nl Z^{(\NLpf)}_{n,1}\vert\right] &\le \bigg (1- \dfrac{\delta(1-L_1-L_2)}{2} \bigg )\E \left[\vert \psodi{M}_{n-1,1}-  \nl Z^{(\NLpf)}_{n-1,1}\vert\right] +\dfrac{\delta L_2}{2}\dfrac{C_0}{\sqrt{ M}}.
	\end{align*}
        Iterating this inequality and using that both systems start with the same initial value we get
        \begin{align}\label{eq:critical}
		\E \left[\vert \psodi{M}_{n,1}-  \nl Z^{(\NLpf)}_{n,1}\vert\right]&\le 
        \dfrac{L_2}{1-L_1-L_2}\dfrac{C_0}{\sqrt{ M}}\left(1- \Bigg (1- \dfrac{\delta(1-L_1-L_2)}{2} \Bigg )^n\right).
	\end{align}
	The result follows since $L_1+L_2<1$.
\end{proof}
\begin{remark}[On the $(L_1,L_2)$-Lipschitz condition]\label{l1l2c}
The condition $L_1+L_2<1$ is only used in the proof of Theorem \ref{propchaos} in the iteration step leading to Eq. \eqref{eq:critical}. If $L_1+L_2=1$, the iteration steps yields
$$\E \left[\vert \psodi{M}_{n,1}-  \nl Z^{(\NLpf)}_{n,1}\vert\right]\le \frac{\delta L_2}{2}\frac{C_0 \,n}{\sqrt{M}}.$$
Similarly, if $L_1+L_2>1$, the iteration step yields
        \begin{align*}
		\E \left[\vert \psodi{M}_{n,1}-  \nl Z^{(\NLpf)}_{n,1}\vert\right]&\le 
        \dfrac{L_2}{L_1+L_2-1}\dfrac{C_0}{\sqrt{ M}}\left(\Bigg (1+ \dfrac{\delta(L_1+L_2-1)}{2}\Bigg )^n-1\right).
	\end{align*}
Hence, we obtain linear exponential bounds in $n$ for the corresponding $\Was$-distance. This contrasts with the uniform bound obtained in Theorem \ref{propchaos} under the assumption $L_1+L_2<1$. 
\end{remark}
\subsection{Propagation of chaos II: scaled processes}
In this section we prove Theorem \ref{th:prop_chaos_2} which is in spirit analogous to Theorem \ref{propchaos}, but for duly scaled processes in continuous time. Let us recall that, we associate to each $N\in\N$ a measurable function $\NLpfn:[0,1]\times\Ps\to[0,1]$ and we consider a strictly increasing function $M:\N\to\N$. For each $N\in\N$, we consider the host-pathogen system with $M(N)$ hosts described by the Markov chain $\psdN{M(N)}$ with $\delta_N\coloneqq \sqrt{3/N}$. Similarly, for each $N \in\N$, we consider the non-linear chain $\nl Z^{(\delta_N,\NLpf_N)}$ driven by $\mathbb{Q}^{(\delta_N,\NLpfn)}$. The scaled processes $ \sPSsM$ and $\sNLchainN$ were defined \eqref{def:systems} as \(
\sPSsM_t=  \psdN{M(N)}_{\lfloor Nt\rfloor}\) and \(\quad\sNLchainN_t= \nlZn_{\lfloor Nt\rfloor},\, t\geq 0\). Now, we have all the ingredients to state our second propagation of chaos result, which specialize to the class of functions used in Section \ref{section:MVWF}.

\begin{proof}[Proof of Theorem \ref{th:prop_chaos_2}]
Thanks to the assumption on $\NLb$, $\NLpfn$ is $(1+2L_1/\sqrt{3N},2L_2/\sqrt{3N})$-Lipschitz. Combining this with Remark \ref{l1l2c} , we obtain
\begin{align*}
\E \left[\vert \psodi{M}_{n,1}-  \nl Z^{(\NLpf)}_{n}\vert\right] & \le \dfrac{C_0L_2}{\sqrt{M(N)}} \dfrac{\Big(1+\frac{L_1+L_2}{N}\Big)^n-1}{L_1+L_2},
\end{align*}
and the result follows choosing \(n=\lfloor Nt \rfloor\).
\end{proof}

In the next uniform-in-time propagation of chaos result, we remove the condition imposed in Theorem \ref{th:prop_chaos_2} about the particular form of $\NLpfn$. Instead, we impose a stronger Lipschitz condition on $\NLpfn$ and an additional condition on the function $M$.

\begin{proposition}\label{prop:uniform_prop_chaos}
Assume that for each $N\in\N$ the function $\NLpfn$ is $(L_1(N),L_2(N))$-Lipschitz for some $L_1(N),L_2(N)>0$ with $L_1(N)+L_2(N)<1$. Let $\mu_0^{(N)}$ be the law of $\sNLchainN_0$ and assume that $\sPSsM_0\sim (\mu_0^{(N)})^{\otimes M(N)}$. Then For $N$ sufficiently large, we have
\begin{align*}
\Was (\Law (\sPSsM_{t,1}), \Law (\nl X^{(N)}_t)) \le C_0 \dfrac{L_2(N)}{1- L_1(N)-L_2(N)}\dfrac{1}{\sqrt{M(N)}}.
\end{align*}
In particular, under the additional assumption that
\begin{align*}
M(N) \gg \left (\dfrac{L_2(N)}{1-L_1(N)-L_2(N)}\right)^2, 
\end{align*}
we have 
\begin{align*}
\lim\limits_{N\to\infty}\sup_{t\geq 0}\Was (\Law (\sPSsM_{t,1}), \Law (\nl X^{(N)}_t))=0.
\end{align*}
\end{proposition}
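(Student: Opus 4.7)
The plan is to obtain the bound as a direct consequence of Theorem \ref{propchaos}, by exploiting the uniform-in-$n$ estimate that already appears inside its proof. First I would apply Theorem \ref{propchaos} with $\delta=\delta_N=\sqrt{3/N}$, driving function $\NLpfn$, Lipschitz constants $L_1(N),L_2(N)$, and $M=M(N)$ hosts; the standing hypothesis $L_1(N)+L_2(N)<1$ ensures its applicability. Since $\mu_0^{(N)}=\Law(\sNLchainN_0)$ and $\sPSsM_0\sim(\mu_0^{(N)})^{\otimes M(N)}$, the coupling from Remark \ref{app:particle-system-coupling} can be initialized so that the particle system and the independent copies of the nonlinear chain coincide at time $0$.

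The key observation, really the only non-routine step, is that the estimate \eqref{eq:critical} is already uniform in the generation index. Indeed, since $1-(1-\delta_N(1-L_1(N)-L_2(N))/2)^n$ lies in $[0,1]$, that inequality immediately yields
\begin{equation*}
\E\!\left[\big|\psdiN{M(N)}_{n,1}-\nl Z^{(\delta_N,\NLpfn)}_n\big|\right]\le\frac{C_0\,L_2(N)}{1-L_1(N)-L_2(N)}\,\frac{1}{\sqrt{M(N)}}\qquad\text{for every }n\ge 0.
\end{equation*}
Setting $n=\lfloor Nt\rfloor$ and recalling the definitions \eqref{def:systems}, together with the elementary bound $\Was(\Law(X),\Law(Y))\le\E|X-Y|$ applied to the coupled variables, gives the first inequality in the statement.

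The second assertion is then immediate: under the growth condition $M(N)\gg\bigl(L_2(N)/(1-L_1(N)-L_2(N))\bigr)^2$, the right-hand side of the above bound tends to $0$ as $N\to\infty$, and it does so uniformly in $t\ge 0$ (since the bound does not involve $t$), whence the supremum over $t$ also vanishes in the limit. I do not foresee any substantive obstacle: the entire content of the proposition is the remark that Eq. \eqref{eq:critical}, which is a byproduct of the contractive recursion driving Theorem \ref{propchaos}, is in fact sharper than the bound $C/\sqrt{M}$ stated in that theorem, precisely because it is already uniform in $n$.
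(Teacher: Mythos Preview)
Your proposal is correct and follows exactly the same approach as the paper: the paper's proof simply invokes \eqref{eq:critical} with $L_1=L_1(N)$, $L_2=L_2(N)$, $M=M(N)$ and $n=\lfloor Nt\rfloor$ for the first claim, and notes that the second claim follows directly from the first.
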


\begin{proof}[Proof of Proposition \ref{prop:uniform_prop_chaos}] The second claim follows directly from the first one. The first one follows using \eqref{eq:critical} with $L_1=L_1(N)$, $L_2=L_2(N)$, $M=M(N)$ and $n=\lfloor Nt\rfloor$.
\end{proof}

\section{Self-stabilizing Wright--Fisher populations with mutation: a case study. } \label{sec:case_study}

Let us recall the McKean--Vlasov SDE from Section \ref{sec:overview-results}
\begin{equation}\tag{\ref{eq:wf_mut}}
\begin{aligned}
\dd\self_t &= \left(-\theta_1 \self_t + \theta_0(1-\self_t) - \gamma(\self_t-\E[\self_t])\right) \dd t  + \sqrt{\self_t (1-\self_t)}\dd W_t,\\
 \nl X_0 &\sim \overline \mu_0. 
\end{aligned}
\end{equation}
Here $\self_t$ can be interpreted as the fraction of the population of type $0$ at time $t$. The population is subject to mutations at rates $\theta_0 >0,\theta_1 > 0$, but also to a \emph{pushing towards the mean} effect modulated by $\gamma\in(0,1)$. The existence and uniqueness of solutions for \eqref{eq:wf_mut} is not covered by the results in Section \ref{ss:existence_MV}, but follow from \cite[Theorem 3.1]{huang2022path}. 

We will now prove Theorem \ref{prop:invariant-self} about the ergodic properties of the process $\self$ using the tools developed in the previous sections.

\begin{proof}[Proof of Theorem \ref{prop:invariant-self}] The proof consists of four main steps.

\noindent\textbf{Step one - Ergodicity of the approximating non-linear chains:}
Let us consider
$\delta_N\coloneqq \sqrt{3/N}$ and
\begin{align} \label{eq:pn}
     \NLpfn(x,\mu)&\coloneqq x + \frac{2}{\sqrt{3N}}\left(-\theta_1x+\theta_0(1-x) - \gamma(x-\mom{\mu}{1}) \right),\quad x\in[0,1].
\end{align}
On can show that $\NLpfn(x)\in[0,1]$ for all $(x,\mu)$ as long as $N\geq 4(\theta_0+\theta_1+\gamma)^2/3$. Hence the non-linear Markov chain  $\nlZn$ is well defined. 
Moreover, a straightforward computation shows that
\begin{align}\label{eq:lips_constants}
    \vert \NLpfn(x,\mu) - \NLpfn(y,\nu)\vert \leq \left(1-\frac{2(\theta_0+\theta_1+\gamma)}{\sqrt{3N}}\right)|x-y| + \frac{2\gamma}{\sqrt{3N}}\Was(\mu,\nu).
\end{align}
In other words, $\NLpfn$ is $(L_1(N),L_2(N))$-Lipschitz with $L_1(N)=1-2((\theta_0+\theta_1+\gamma)/\sqrt{3N}$ and $L_2(N)=2\gamma/\sqrt{3N}$. In particular, we have
$$
    L_1(N)+L_2(N) = 1-\frac{2(\theta_1+\theta_0)}{\sqrt{3N}}<1\;\quad\text{for all }N\geq 4(\theta_0+\theta_1)^2/3.
$$
Thus, thanks to Theorem \ref{th:non_linear_ergodic}, the non-linear Markov chain  $\nlZn$ admits a unique invariant distribution $\eta^{(N)}$ and
\begin{align}\label{eq:rates}
\Was (\Law(\nlZn_n), \eta^{(N)}) \le  \Bigg (  1- \dfrac{\theta_1+\theta_0}{N}\Bigg)^n \Was (\Law (\nlZn_0), \eta^{(N)}).
\end{align}

\noindent\textbf{Step two - Convergence of the sequence of invariant distributions:}
A general observation about non-linear Wright--Fisher chains is needed. If $\eta$ is the stationary distribution of $\NLchainP$, then the first moment of $\eta$ can be expressed as
    \begin{align}\label{eq:nl_expectation}
\mom{\eta}{1}= \E_\eta [\NLchainP_1] =\E_\eta [\NLpf(\NLchainP_0,\eta)].
    \end{align}
In our case, this yields
\begin{align*}
\mom{\eta^{(N)}}{1} &= \dfrac{\theta_0}{\theta_1+ \theta_0}\eqqcolon m.
\end{align*}
Thus, defining
$$\pf_{\delta_N} (x)\coloneqq x+\frac{2\delta_N}{3}\left(\bar{\theta}_0(1-x)+\bar{\theta_1}x\right),
$$
where $\bar{\theta}_0=\theta_0(1+\gamma/(\theta_0+\theta_1))$ and $\bar{\theta}_1=\theta_1(1+\gamma/(\theta_0+\theta_1))$, a straightforward computation shows that $\NLpfn(x,\eta^{(N)})=\pf_{\delta_N} (x)$. Consequently, the invariant measure of the non-linear chain $\nlZn$ coincides with the invariant measure of the Markov chain $Z^{(\delta_n,\pf_{\delta_N})}$. Therefore, according to Remark \ref{rem:moment_nutation}, the sequence $(\eta^{(N)})_{N \ge 1}$ converges weakly to the $\betadist(2\bar{\theta}_0,2\bar{\theta}_1)$ distribution.

\noindent\textbf{Step three - Existence and uniqueness of an invariant distribution for $\self$:}
It is well-known that the Wright--Fisher diffusion with mutation
\begin{align*}
    \dd X_t = \left(-\theta_1 X_t + \theta_0(1-X_t) \right) \dd t + \sqrt{X_t (1-X_t)}\dd W_t, 
\end{align*}
has a unique invariant distribution $\betadist (2 \theta_0, 2\theta_1)$ (see e.g. \cite{durrett2002}). Now, fix a $\Was$-Lipschitz function $h:\mathcal{P}([0,1]) \mapsto [0,1]$ and consider the family of SDE parameterized by $\mu \in \mathcal{P}(0,1)$:
\begin{equation}\label{eq:SDE_mu}
\begin{aligned}
    \dd\difMu_t =& \left(-(\theta_1 + \gamma (1-h(\mu))) \difMu_t + (\theta_0+ \gamma h(\mu))(1-\difMu_t) \right) \dd t \\
    &\quad + \sqrt{\difMu_t (1-\difMu_t)}\dd W_t. 
\end{aligned}
\end{equation}

Thus, according to the previous discussion, $\difMu$ has a unique invariant distribution $T\mu$, which is given by the $\betadist (2(\theta_0+ \gamma h(\mu)), 2 (\theta_1+ \gamma (1- h(\mu))))$ distribution. Using Proposition \ref{prop:was_beta}, it can be shown that 
\begin{align*}
\Lip(T) \le \dfrac{\gamma \Lip (h)}{\theta_1+\theta_0+ \gamma}.
\end{align*}
As a consequence of this result (and a fixed point argument that mimics the proof of theorem \ref{th:existence}), for every $\gamma \ge 0$, the McKean--Vlasov Wright--Fisher SDE \eqref{eq:wf_mut} admits a unique invariant distribution which is the $\betadist \left ( 2 \theta_0 (1+ \gamma/(\theta_1+ \theta_0)), 2\theta_1 (1+ \gamma/(\theta_1+\theta_0)) \right)$ distribution.

\noindent\textbf{Step four - Interpolation:}

For $N \geq N_0\coloneqq4(\theta_0+\theta_1+\gamma)^2/3$, we set $\sNLchainN_t\coloneqq\nlZn_{\lfloor Nt \rfloor}$, for $t \ge 0$. Assume that $\nlZn_0= \self_0$. Set $\eta^{(N)}_t= \Law (\sNLchainN_t)$ and $\eta_t= \Law (\self_t)$. Fix $t \ge 0$ and $\varepsilon > 0$. We have
\begin{align*}
\Was (\eta_t, \eta) \le \Was (\eta_t, \eta^{(N)}_t) + \Was (\eta^{(N)}_t, \eta^{(N)}) + \Was (\eta^{(N)}, \eta).
\end{align*}
We claim now that, by Theorem \ref{th:convergence_theorem}, there exists $N_1\ge N_0$ such that for every  $N \ge N_1$ the first term is smaller than $\varepsilon/4$ (the $N_1$ may depend on $t$). Indeed, for any $T \ge t$, we have proved above, that in the Skorokhod space the sequence of laws $P^{(N)}$ of the processes $(\sNLchainN_{[0, T]})$ converges weakly to the law $P$ of the process $\self_{[0, T]}$. Since the projection map $\pi_t: \mathbb{D} ([0, T]; [0,1]) \mapsto [0,1]$ is $P$-a.s. continuous, we have that $P^{(N)} \circ \pi_t^{-1} \to P \circ \pi_t^{-1}$ weakly. Since $\Was$ metrizes the weak convergence on $[0,1]$, the claim follows.  

Now, by Step two, there exists $N_2 \ge N_1$ such that for every $N \ge N_2$ the third term above is smaller than $\varepsilon/4$. As for the second term, in virtue of the estimate \eqref{eq:rates}, for some $N_3 \ge N_2$ and for every $N \ge N_3$:
\begin{align*}
\Was (\eta^{(N)}_t, \eta^{(N)}) &\le \exp(-(\theta_1+\theta_0)t)\Was (\eta^{(N)}_0, \eta^{(N)}) + \varepsilon/4\\
&\le \exp(-(\theta_1+\theta_0)t)\Was (\eta_0, \eta) + 2\varepsilon/4.
\end{align*}
The result follows since $\varepsilon$ was chosen arbitrarily. 
\end{proof}
\begin{remark}
 Observe that the mean of the invariant measure is $m\coloneqq\theta_0/(\theta_1+ \theta_0)$, and thus it is independent of $\gamma$. Moreover, since the second moment of the invariant distribution equals
$$m\dfrac{2(\theta_0+ \gamma m)+1}{2 (\theta_1+\theta_0+ \gamma)+1},$$ we conclude that the variance of the invariant distribution of the McKean--Vlasov Wright--Fisher SDE is decreasing in $\gamma\geq 0$. In particular, the non-linearity has, as the title of this section suggests, a stabilizing effect . See Fig. \ref{fig:fig2} for an illustration of the density of the invariant measure for different values of $\gamma$.

\end{remark}

We close this section with the proof of Theorem \ref{prop_chaos_sswf}, which states that any individual particle of the Markovian system $\sPSsM$ approximates the dynamics of the McKean--Vlasov Wright--Fisher diffusion \eqref{eq:wf_mut}. 

\begin{proof}[Proof of Theorem \ref{prop_chaos_sswf}]
Note first that
\begin{align*}
\Was (\Law((\sPSsM_{t, 1})), \Law (\self_t))\le&  \Was (\Law((\sPSsM_{t, 1})), \Law (\sNLchainN_t))\\
& +\Was (\Law (\sNLchainN_t),\Law(\self_t)).
\end{align*}
Hence, Proposition \ref{prop:uniform_prop_chaos} delivers
\begin{align*}
\limsup_{N\to\infty}\sup_{t\ge 0}\Was (\Law((\sPSsM_{t, 1})), \Law (\self_t))\le&\limsup_{N\to\infty}\sup_{t\ge 0}\Was (\Law (\sNLchainN_t),\Law(\self_t)).
\end{align*}
We claim now that for every $T>0$, we have 
\begin{align*}
\lim_{N \to \infty} \sup_{t \in [0, T]} \Was (\Law (\sNLchainN_t), \Law (\self_t))=0.
\end{align*}
Assume the  claim is true.  Clearly, for every $T \ge 0$:
\begin{align*}
\sup_{t \ge T} \Was (\Law (\sNLchainN_t), \Law (\self_t)) & \le \Was (\eta^{(N)}, \eta)+ \sup_{t \ge T}\Was (\Law (\sNLchainN_t), \eta^{(N)}) \\
& \qquad  \qquad \qquad+ \sup_{t \ge T} \Was (\eta, \self_t),  
\end{align*}
According to Remark \ref{rem:moment_nutation} the first term in the right-hand side converges to $0$ as $N\to\infty$. Moreover, thanks to \eqref{eq:rates} the second term can be made arbitrarily small by taking $T$ sufficiently large. Similarly, by Proposition \ref{prop:invariant-self} we can make the last term arbitrarily small for $T$ sufficiently large. Combining this with the claim yields the result. It remains to prove the claim.

Let $V^{(N)}_t$ be the piecewise-linear, continuous process given by:
\begin{align*}
V^{(N)}_t = \nlZn_{\lfloor Nt \rfloor} + (Nt- \lfloor Nt\rfloor) (\nlZn_{\lfloor Nt \rfloor+1}- \nlZn_{\lfloor Nt \rfloor}); \quad (t \ge 0). 
\end{align*}
Note  that $\vert V^{(N)}_t - \sNLchainN_t \vert \le N^{-1/2}$ for every $t \ge 0$. In particular,
\begin{align} \label{eq:ineq_3}
\Was (\Law (V^{(N)}_t), \Law (\sNLchainN_t)) \le N^{-1/2}.
\end{align}
Now, since the laws of the sequence $(\sNLchainN)_{N \ge 1}$ converges weakly to the law of $\self$, the same is true for the law of the processes $(V^{(N)})_{N \ge 1}$ (see for example \cite{ethierkurtz86}, Proposition 10.4, page 149), and moreover the convergence holds as probability measures on $\mathcal{C}([0, T];[0,1])$ (see \cite{ethierkurtz86}, Problem 25 at page 153). This space is Polish under the uniform norm, and of bounded diameter; so the Wasserstein distance $\Was (T; \cdot, \cdot)$ at display \eqref{def:was_C} metrizes the weak convergence on $\mathcal{P} (\mathcal{C}([0, T]; [0,1]))$ (see \cite{villani2009}, theorem 6.9, page 108). Since:
\begin{align*} 
\sup_{t \in [0, T]} \Was (\Law (V^{(N)}_t), \Law (\self_t)) \le \Was (T; \Law (V^{(N)}_{[0, T]}), \Law (\self_{[0, T]})),
\end{align*}
the left-hand goes to $0$ as $N \to \infty$. This, together with \eqref{eq:ineq_3}, shows the claim.
\end{proof}

\section{Closing remarks and future research}\label{section:discussion}
In this article we introduced a unified framework that recovers several
well-known models from population genetics and, in some cases,
reveals new moment dualities. We further extended our analysis to the
non-classical setting of non-linear Markov chains and showed that,
under suitable scaling, their dynamics converge to McKean--Vlasov
diffusions.

To illustrate the utility of our approach, we studied in detail
non-linear Markov chains associated with Wright--Fisher models whose
mutation rates depend on the empirical measure of the population, as
well as the limiting McKean--Vlasov diffusion.

The present framework raises several directions for further research.
A natural extension is to incorporate interactions at the
meta-community scale, thereby yielding a three-level model (see the
discussion and motivation in the Introduction). While the modeling
itself is straightforward within our setup, a full mathematical
analysis comparable to that carried out at the host and deme levels
presents new challenges; this is the topic of ongoing work.

The duality results in Section~\ref{section:host_level}
(Theorem~\ref{th:moment_duality_II}) were established in a purely
algebraic manner. In many classical models of population genetics,
moment duality reflects a coupling between forward and backward
dynamics via time reversal, often supported by a particle
representation of the forward process. In our situation, although a
particle picture exists for the backward process, its relation to the
forward dynamics remains unclear. Constructing such a coupling is part of our current research efforts.

Finally, a more conceptual challenge is to understand how the notion of
duality employed here may extend to non-linear Markov processes.
Although we are unable to provide concrete solutions at this stage, we
believe this to be a promising direction for future investigation.\\

\begin{acks}
We would like to thank the anonymous referee and the editor for their careful reading of the previous version and for the valuable comments and suggestions that substantially improved this article.

We are also grateful to Joaquín Fontbona for sharing his valuable insights, in particular regarding the existence of solutions to McKean--Vlasov equations with non-Lipschitz coefficients. L.V.\ thanks Ellen Baake and her group for kindly providing the facilities that made it possible to carry out part of this work.
\end{acks}

\ifArxiv
\noindent\textbf{Funding.}
F.C. was funded by the Deutsche Forschungsgemeinschaft (DFG, German Research Foundation) --- Project-ID 317210226 --- SFB 1283; C.J. was funded by Universidad de Santiago de Chile, DMCC --- Beca de Trabajo POC 2022-DMCCUSA215; H.O. was partially funded by ANID Exploraci\'on, project number 13220168-2023 \emph{Biological and quantum Open System Dynamics: evolution, innovation and mathematical foundations} and FONDECYT Regular Nº1242001; L.V. was funded by FONDECYT Iniciaci\'on, project number 11240158-2024 \emph{Adaptive behavior in stochastic population dynamics and non-linear Markov processes in ecoevolutionary modeling}.
\fi


\bibliographystyle{plain}

\bibliography{BIB_NLWF}

\begin{thebibliography}{10}

\bibitem{BCH17}
E.~Baake, F.~Cordero, and S.~Hummel.
\newblock A probabilistic view on the deterministic mutation--selection
  equation: dynamics, equilibria, and ancestry via individual lines of descent.
\newblock {\em J. Math. Biol.}, 77:795--820, 2018.

\bibitem{BEH23}
E.~Baake, L.~Esercito, and S.~Hummel.
\newblock Lines of descent in a moran model with frequency-dependent selection
  and mutation.
\newblock {\em Stochastic Processes and their Applications}, 160:409--457,
  2023.

\bibitem{BW18}
E.~Baake and A.~Wakolbinger.
\newblock Lines of descent under selection.
\newblock {\em Journal of Statistical Physics}, 172(1):156--174, Jul 2018.

\bibitem{benaimhurth2022}
M.~Ben\"aim and T.~Hurth.
\newblock {\em Markov chains on metric spaces: a short course}.
\newblock Springer Nature, Switzerland, 2022.

\bibitem{Ber09}
N.~Berestycki.
\newblock {\em Recent progress in coalescent theory}, volume~16 of {\em Ensaios
  Matem\'{a}ticos}.
\newblock Sociedade Brasileira de Matem\'{a}tica, Rio de Janeiro, 2009.

\bibitem{Bertoin2003}
J.~Bertoin and J-F. Le~Gall.
\newblock Stochastic flows associated to coalescent processes.
\newblock {\em Probab. Theory Relat. Fields}, 126(2):261--288, 2003.

\bibitem{Bertoin2005}
J.~Bertoin and J-F. Le~Gall.
\newblock Stochastic flows associated to coalescent processes. {II}.
  {S}tochastic differential equations.
\newblock {\em Ann. Inst. H. Poincar{\'e} Probab. Statist.}, 41(3):307--333,
  2005.

\bibitem{Bertoin2006}
J.~Bertoin and J-F. Le~Gall.
\newblock Stochastic flows associated to coalescent processes. {III}. {L}imit
  theorems.
\newblock {\em Illinois J. Math.}, 50(1-4):147--181, 2006.

\bibitem{billingsley1999}
P.~Billingsley.
\newblock {\em Convergence of probabilty measures}.
\newblock Wiley series in probability and statistics. John Wiley and Sons,
  1999.

\bibitem{bossy2019}
M.~Bossy, J.~Fontbona, and H.~Olivero.
\newblock Synchronization of stochastic mean field networks of hodgkin–huxley
  neurons with noisy channels.
\newblock {\em Journal of Mathematical Biology}, 78:1771--1820, 2019.

\bibitem{butkovsky2014}
O.~A. Butkovsky.
\newblock On ergodic properties of nonlinear markov chains and stochastic
  mckean--vlasov equations.
\newblock {\em Theory of Probability \& Its Applications}, 58(4):661--674,
  2014.

\bibitem{Cannings74}
C.~Cannings.
\newblock The latent roots of certain markov chains arising in genetics: A new
  approach, i. haploid models.
\newblock {\em Advances in Applied Probability}, 6(2):260--290, 1974.

\bibitem{carmona2019}
R.~Carmona and F.~Delarue.
\newblock {\em Probabilistic Theory of Mean Field Games with Applications, I}.
\newblock Probability Theory and Stochastic Modelling. Springer Cham, 2019.

\bibitem{CHS19}
F.~{Cordero}, S.~{Hummel}, and E.~{Schertzer}.
\newblock General selection models: Bernstein duality and minimal ancestral
  structures.
\newblock {\em Ann. Appl. Probab.}, 32:1499--1556, 2022.

\bibitem{DK99}
P.~Donnelly and T.~Kurtz.
\newblock Particle representations for measure-valued population models.
\newblock {\em Ann. Probab.}, 27:166--205, 1999.

\bibitem{durrett2002}
R.~Durrett.
\newblock {\em Probability models for DNA sequence evolution}.
\newblock Probability and its applications. Springer-Verlag, New York, 2002.

\bibitem{ethierkurtz86}
S.~Ethier and T.~Kurtz.
\newblock {\em Markov Processes: characterization and convergence}.
\newblock John Wiley and Sons., 1986.

\bibitem{fournier2015}
N.~Fournier and A.~Guillin.
\newblock On the rate of convergence in wasserstein distance of the empirical
  measure.
\newblock {\em Probab. Theory Relat. Fields}, 162:707--738, 2015.

\bibitem{Glad77}
K.~Gladstien.
\newblock Haploid populations subject to varying environment: The
  characteristic values and the rate of loss of alleles.
\newblock {\em SIAM Journal on Applied Mathematics}, 32(4):778--783, 1977.

\bibitem{Glad78}
K.~Gladstien.
\newblock The characteristic values and vectors for a class of stochastic
  matrices arising in genetics.
\newblock {\em SIAM Journal on Applied Mathematics}, 34(4):630--642, 1978.

\bibitem{GS18}
A.~{Gonz{\'a}lez Casanova} and D.~Span{\`o}.
\newblock Duality and fixation in {$\Xi$}-{W}right--{F}isher processes with
  frequency-dependent selection.
\newblock {\em Ann. Appl. Probab.}, 28:250--284, 2018.

\bibitem{Harris63}
T.~Harris.
\newblock {\em The theory of branching processes}, volume~6.
\newblock Springer Berlin, 1963.

\bibitem{hau23}
F.~Hausdorff.
\newblock Momentenprobleme f{\"u}r ein endliches intervall.
\newblock {\em Mathematische Zeitschrift}, 16:220--248, 1923.

\bibitem{HERRMANN20101215}
S.~Herrmann and J.~Tugaut.
\newblock Non-uniqueness of stationary measures for self-stabilizing processes.
\newblock {\em Stochastic Processes and their Applications}, 120(7):1215--1246,
  2010.

\bibitem{huang2022path}
X.~Huang and X.~Wang.
\newblock Path dependent mckean-vlasov sdes with h{\"o}lder continuous
  diffusion.
\newblock {\em Discrete \& Continuous Dynamical Systems-Series S}, 16(5), 2023.

\bibitem{IkedaWatanabe1989}
N.~Ikeda and S.~Watanabe.
\newblock {\em Stochastic Differential Equations and Diffusion Processes}.
\newblock North-Holland, Amsterdam, second edition, 1989.

\bibitem{jansen2014}
S.~Jansen and N.~Kurt.
\newblock On the notion(s) of duality for markov processes.
\newblock {\em Probability Surveys}, 11:59--120, 2014.

\bibitem{joffe1986}
A.~Joffe and M.~Metivier.
\newblock Weak convergence of sequences of semimartingales with applications to
  multitype branching processes.
\newblock {\em Advances in Applied probability}, 18(1):20--65, 1986.

\bibitem{kallenberg2002}
O.~Kallenberg.
\newblock {\em Foundations of Modern Probability}.
\newblock Springer-Verlag, New York, 2002.

\bibitem{karatzas91}
I.~Karatzas and S.~Shreve.
\newblock {\em Brownian motion and Stochastic Calculus}.
\newblock Springer Verlag, New York, 2nd. edition, 1991.

\bibitem{kolokoltsov2010nonlinear}
V.~Kolokoltsov.
\newblock {\em Nonlinear Markov processes and kinetic equations}, volume 182.
\newblock Cambridge University Press, 2010.

\bibitem{lindvall2002}
T.~Lindvall.
\newblock {\em Lectures on the coupling method}.
\newblock Dover Publications., 2002.

\bibitem{luo2014}
S.~Luo.
\newblock A unifying framework reveals key properties of multilevel selection.
\newblock {\em Journal of Theoretical Biology}, 341:41--52, 2014.

\bibitem{luo2017}
S.~Luo and J.~Mattingly.
\newblock Scaling limits of a model for selection at two scales.
\newblock {\em Nonlinearity}, 30(4):1682, mar 2017.

\bibitem{mckean66}
H.P. McKean.
\newblock A class of markov processes associated with non-linear parabolic
  equations.
\newblock {\em Proceedings of the National Academy of Sciences},
  56(6):1907--1911, 1966.

\bibitem{Martin99}
M.~M\"ohle.
\newblock The concept of duality and applications to markov processes arising
  in neutral population genetics models.
\newblock {\em Bernoulli}, 5(5):761--777, 1999.

\bibitem{neumann2023nonlinear}
B.A. Neumann.
\newblock Nonlinear markov chains with finite state space: invariant
  distributions and long-term behaviour.
\newblock {\em Journal of Applied Probability}, 60(1):30--44, 2023.

\bibitem{Pitman1999}
J.~Pitman.
\newblock Coalescents with multiple collisions.
\newblock {\em Ann. Probab.}, 27:1870--1902, 1999.

\bibitem{book:lipster-shiryayev-vol1}
A.~N.~Shiryayev R.~S.~Liptser.
\newblock {\em Statistics of Random Processes I: General Theory}.
\newblock Stochastic Modelling and Applied Probability №5. Springer, 1977.

\bibitem{book:lipster-shiryayev-vol2}
A.~N.~Shiryayev R.~S.~Liptser.
\newblock {\em Statistics of Random Processes II: Applications}.
\newblock Stochastic Modelling and Applied Probability №6. Springer, 1978.

\bibitem{revuzyor1999}
D.~Revuz and M.~Yor.
\newblock {\em Continuous martingales and Brownian motion}.
\newblock Springer Verlag, 3rd edition, 1999.

\bibitem{saburov2016ergodicity}
M.~Saburov.
\newblock Ergodicity of nonlinear markov operators on the finite dimensional
  space.
\newblock {\em Nonlinear Analysis: Theory, Methods \& Applications},
  143:105--119, 2016.

\bibitem{Sa99}
S.~Sagitov.
\newblock The general coalescent with asynchronous mergers of ancestral lines.
\newblock {\em J. Appl. Probab.}, 36:1116--1125, 1999.

\bibitem{stubbendieck2016}
R.~Stubbendieck, C.~Vargas-Bautista, and P.~Straight.
\newblock Bacterial communities: Interactions to scale.
\newblock {\em Front Microbiol.}, 2016.

\bibitem{tysbakov2009}
A.~Tysbakov.
\newblock {\em Introduction to nonparametric estimation}.
\newblock Springer series in statistics. Springer, 2009.

\bibitem{urban2008}
M.~Urban, M.~Leibold, P.~Amarasekare, L.~{De Meester}, R.~Gomulkiewicz,
  M.~Hochberg, C.~Klausmeier, N.~Loeuille, C.~{de Mazancourt}, J.~Norberg,
  J.~Pantel, S.~Strauss, M.~Vellend, and M.~Wade.
\newblock The evolutionary ecology of metacommunities.
\newblock {\em Trends in Ecology and Evolution}, 23(6):311--317, 2008.

\bibitem{videla2023}
L.~Videla.
\newblock A transport process on graphs and its limiting distributions.
\newblock {\em Journal of Applied Probability}, 60(1):341–357, 2023.

\bibitem{villani2009}
C.~Villani.
\newblock {\em Optimal transport: old and new}.
\newblock Grundlehren der mathematischen Wissenschaften. Springer, 2009.

\end{thebibliography}

\appendix

\section{Auxiliary results}\label{section:appendix}
\subsection{Further results for Section \ref{ss3.1}}
\label{subsection:auxiliary_results}

\begin{proposition}\label{prop:resumen}
	For any $ \delta \in (0,1], \p \in [0,1]$ it holds:
	\begin{enumerate}[label=\roman*),ref=\ref{prop:resumen} part (\roman*),start=1]
	\item $\beta_{\delta, \p}$ is absolutely continuous with respect to the Lebesgue measure and its density satisfies the integral equation:
\begin{align}\label{eq:integral_equation}
	g(z)= \dfrac{1-\p}{\delta} \int_{z}^{\frac{z}{1-\delta}\wedge 1} \dfrac{g(x)}{x} \dd x + \dfrac{\p}{\delta}  \int_{\frac{z-\delta}{1-\delta}\vee 0}^z \dfrac{g(x)}{1-x}\dd x.
\end{align}
	\item  \label{prop:moments}
	For every $k \ge 0$:
	\begin{align} \label{eq:moments}
		\mom{\beta_{\delta, p}}{k}\left ( 1- \dfrac{1}{\delta}\int_{0}^{\delta} (1-x)^k \dd x\right) = \dfrac{\p}{\delta}\sum_{j=0}^{k-1} \binom{k}{j}\mom{\beta_{\delta, p}}{j} \int_{0}^{\delta} (1-x)^{j}x^{k-j}\dd x.		
	\end{align} 
	In particular,
	\begin{align}\label{eq:1_2_moments}
		\mom{\beta_{\delta, p}}{1}= \p; \qquad \text{var}(\beta_{\delta, p}) = \dfrac{\delta\,\p(1-\p)}{3-\delta}. 
	\end{align}
	\item \label{prop:distance-invartiant} For all $\p,q\in[0,1]$:
	\begin{align} \label{eq:was_distance}
		\Was (\beta_{\delta, \p}, \beta_{\delta, q}) = \vert \p-q \vert.
	\end{align}
	\end{enumerate}	
	\end{proposition}

\begin{proof} 
\begin{enumerate}[label=\roman*)]
\item Let $h \ge 0$ be a bounded measurable function. Using the invariance of $\beta_{\delta,\p}$ under $P_{\delta,\p}$ and the definition of $P_{\delta,\p}$
	\begin{align*}
		\beta_{\delta, \p} h &= \beta_{\delta, \p} P_{\delta, \p} h \\
		&= \int_{[0,1]} \beta_{\delta, \p} (\dd x) \int_{[0,1]}  \Big((1-\p)  h (x(1-\delta y))+ \p h (x+(1-x)\delta y) \Big)\dd y\\
		&= \int_{[0,1]} \beta_{\delta, \p} (\dd x)\Bigg ( \dfrac{1-\p}{\delta x}\int_{x (1-\delta)}^x h (u)\dd u + \dfrac{\p}{\delta (1-x)}\int_{x}^{x+(1-x)\delta} h(u)\dd u \Bigg) . 
	\end{align*}
	If $h \equiv 0$ a.e. with respect to the Lebesgue measure, then the inner integrals vanish for every $x \in [0,1]$, and then $\beta_{\delta, \p}h=0$, which proves our claim. We postpone the derivation of the integral equation for the density of $\beta_{\delta,\p}$ until the end of the proof.

\item Let $f:[0,1] \mapsto [0,1]$ be the moment generating function of the distribution $\beta_{\delta, \p}$. Then, if $(X_n)_{n\geq0}$ is the chain with kernel $P_{\delta, \p}$ and initial distribution $\beta_{\delta, \p}$, for $t \in [0,1]$:
	\begin{align*}
		f(t)& = \E[e^{tX_0}] = \E [e^{tX_1}]  = (1-\p)\E [e^{tX_0 (1- \delta U)}] + \p \E [e^{t X_0 (1- \delta U)+ t\delta U}],
	\end{align*}
	where $U\sim \unif$ independent of $X_0$. Conditioning, we obtain:
	\begin{align*}
		f(t)&= \dfrac{(1-\p)}{\delta} \int_{0}^\delta f(t (1-x)) \dd x + \dfrac{\p}{\delta} \int_{0}^\delta f(t (1-x)) e^{tx} \dd x. 
	\end{align*}  
	Differentiating $k$ times:
	\begin{align*}
		f^{(k)} (t) & = \dfrac{(1-\p)}{\delta} \int_{0}^\delta (1-x)^{k}f^{(k)}(t (1-x)) \dd x \\
		& \quad + \dfrac{\p}{\delta} \int_{0}^\delta \sum_{j=0}^k \binom{k}{j}(1-x)^j x^{k-j}f^{(j)}(t (1-x)) e^{tx} \dd x. 
	\end{align*}
	Since $\mom{\beta_{\delta, p}}{j}= f^{(j)}(t) \vert_{t=0}$, we obtain:
	\begin{align*}
		   \mom{\beta_{\delta, p}}{k}= \mom{\beta_{\delta, p}}{k} \dfrac{1-\p}{\delta} \int_{0}^{\delta} (1-x)^{k} \dd x +   \sum_{j=0}^{k} \binom{k}{j}\mom{\beta_{\delta, p}}{j}\dfrac{\p}{\delta} \int_{0}^{\delta} (1-x)^{j} x^{k-j}\dd x. 
	\end{align*}
	The claim follows.
	\item From Proposition \ref{lemma:contraction}, part 1, we see that the $\vert p-q\vert$ is an upper bound for the Wasserstein distance between $\beta_{\delta, p}$ and $\beta_{\delta, q}$; then equality follows from \eqref{eq:1_2_moments}
\end{enumerate}
We now provide a rigorous derivation of the integral equation \eqref{eq:integral_equation}.  
Let $(X_n)$ be the Markov chain driven by $P_{\delta,p}$ and started from its stationary
distribution $\beta_{\delta,p}$. For ease of notation, set $X := X_0$ and $g := g_{\delta,p}$.
For $z \in (0,1)$, we obtain:
\begin{align*}
\prob[X_{1} \leq z] &= (1-p) \prob[x(1-\delta U)  \leq z] + p \prob[X(1-\delta U)+\delta U \leq z] \\
&= (1-p)\prob\bigg[ U \geq \frac{X-z}{\delta X}\bigg] + p \prob\bigg[U \leq \frac{z-X}{\delta(1-X)}\bigg]\\
&= (1-p)\bigg( \prob\bigg[ U \geq \frac{X-z}{\delta X}, X > z\bigg] + \prob\bigg[\underbrace{U \geq \frac{X-z}{\delta X}, X \leq z}_{\text{event A}}\bigg]\bigg ) \\
&\phantom{=}+   p \bigg( \prob\bigg[\underbrace{U \leq \frac{z-X}{\delta(1-X)}, X>z}_{\text{event B}}\bigg] + 
\prob\bigg[U \leq \frac{z-X}{\delta(1-X)}, X\leq z\bigg]  \bigg) 
\shortintertext{Observe that event A simplifies to  $\{X\leq z\}$ and event B has probability 0. So:}
\prob[X_{1} \leq z]&= (1-p)\prob\bigg[U \geq \frac{X-z}{\delta X}, X>z\bigg]+ p\prob\bigg[U \leq \frac{z-X}{\delta(1-X)},X\leq z\bigg] + (1-p)\prob(X \leq z). 
\end{align*}
Let $f_{(X,U)}(x,u)$ be the joint density of the random variables $U$ and $X$. Then, $f_{(X,U)}(x,u) = g(x)\mathbf{1}_{x, u \in [0,1]}$. So:
\begin{align*}
\prob\bigg[U \geq \frac{X-z}{\delta X}, X>z\bigg] &= \int_{z}^{\frac{z}{1-\delta} \wedge 1}\int_{\frac{x-z}{\delta x}}^{1} g(x) \dd u \dd x \\
&= \left( 1-\frac{1}{\delta} \right) \int_{z}^{\frac{z}{1-\delta} \wedge 1} g(x) \dd x  + \frac{z}{\delta} \int_{z}^{\frac{z}{1-\delta} \wedge 1}\frac{g(x)}{x} \dd x.
\end{align*}
Analogously:
\begin{align*}
\prob\bigg[U \leq \frac{z-X}{\delta(1-X)}, X\leq z\bigg] &=  \int_{0}^{\left(\frac{z-\delta}{1-\delta} \right) \vee 0}\int_{0}^{1} g(x) \dd u \dd x
+ \int_{\left(\frac{z-\delta}{1-\delta} \right) \vee 0}^{z}\int_{0}^{\frac{z-x}{\delta(1-x)}} g(x) \dd u \dd x \\ 
&=  \int_{0}^{\left(\frac{z-\delta}{1-\delta} \right) \vee 0} g(x) \dd x
+ \frac{1}{\delta}\int_{\left(\frac{z-\delta}{1-\delta} \right) \vee 0}^{z} g(x) \dd x
+ \frac{z-1}{\delta}\int_{\left(\frac{z-\delta}{1-\delta} \right) \vee 0}^{z} \frac{g(x)}{1-x} \dd x.
\end{align*}
Set $\wedge(z) \coloneqq  \left( \frac{z}{1-\delta} \right) \wedge 1$, $\vee(z) \coloneqq \left( \frac{z-\delta}{1-\delta} \right) \vee 0$ and $G(z) = \prob(X \leq z)$ (so that $G' = g$ $\mathrm{Lebesgue}$-a.e.) Joining the computations above, we arrive to the relation:
\begin{align*}
G(z) &= (1-p)\left(1-\frac{1}{\delta} \right) \int_{z}^{\wedge(z)}g(x)\dd x + \left(\frac{1-p}{\delta} \right)z\int_{z}^{\wedge(z)} \frac{g(x)}{x} \dd x\\
     &\phantom{=}+ p\int_{0}^{\vee(z)} g(x) \dd x +\frac{p}{\delta} \int_{\vee(z)}^{z} g(x) \dd x + \frac{p}{\delta}(z-1) \int_{\vee(z)}^{z}\frac{g(x)}{1-x}d x+ (1-p)G(z).
\end{align*}
Differentiating this expression, we get (omitting the argument $z$, for the sake of readability):
\begin{align}\label{eq:density}
g(z) &= (1-p)\left( 1-\frac{1}{\delta} \right) \left[ g(\wedge) \frac{\dd \wedge}{\dd z} - g \right] + 
z \left( \frac{1-p}{\delta}\right)\left[\frac{g(\wedge)}{\wedge} \frac{\dd \wedge}{\dd z} -\frac{g}{z}\right]+
\frac{1-p}{\delta}\int_{z}^{\wedge}\frac{g(x)}{x} \dd x \nonumber \\
&\phantom{=}+ p \left[ g(\vee)\frac{\dd \vee}{\dd z} \right] + 
\frac{p}{\delta}\left[ g-g(\vee) \frac{\dd \vee}{\dd  z}\right] +
\frac{p}{\delta}(z-1) \left[ \frac{g}{1-z} - \frac{g(\vee)}{1-\vee} \frac{\dd \vee}{\dd z}\right] \nonumber\\
&\phantom{=}+ \frac{p}{\delta}\int_{\vee}^{z}\frac{g(x)}{1-x} d x + (1-p)g. \nonumber\\
&= \frac{1-p}{\delta}\int_{z}^{\wedge} \frac{g(x)}{x} d x + \frac{p}{\delta}\int_{\vee}^{z} \frac{g(x)}{1-x} \dd x \nonumber\\
&\phantom{=}+ (1-p)g(\wedge)\frac{\dd \wedge}{\dd z} \left( 1-\frac{1}{\delta} + \frac{1}{\delta} \frac{z}{\wedge}\right)
+ p g(\vee) \frac{d \vee}{\dd z} \left( 1-\frac{1}{\delta} - \frac{1}{\delta} \frac{z-1}{1-\vee}\right).
\end{align}
Notice that:
\[
\frac{\dd \wedge}{\dd z}(z) = \left( \frac{1}{1-\delta} \right) 1_{[0,1-\delta)}(z)
\quad \text{and} \quad  
\frac{\dd \vee}{\dd z}(z) = \left( \frac{1}{1-\delta} \right) 1_{(\delta,1]}(z),
\]
and also:
\[
1-\frac{1}{\delta}+\frac{1}{\delta}\frac{z}{\wedge}  = \frac{1}{\delta}(x-(1-\delta)) 1_{[1-\delta,1]}(z)
\quad \text{and} \quad
1-\frac{1}{\delta} -\frac{1}{\delta}\frac{z-1}{1-\vee} =  -\frac{1}{\delta}(x-\delta) 1_{[0,\delta]}(z). 
\]
Consequently, the terms vanish at line \eqref{eq:density}. With this, we have verified the case \(\delta\in(0,1)\). The case \(\delta=1\) is analogous. 
\end{proof}

\subsection{Deferred Proofs for Section \ref{ss:ergodic}} \label{app:to:ss:ergodic}

\begin{proof} [Proof of Lemma \ref{lemma:unique_invariant}]

 Let $\mu,\nu,\mu^0,\mu^*\in\Ps$. 
 Let us introduce some notation. For any measurable function $\NLpf: [0,1] \times \mathcal{P} \mapsto [0,1]$ and $\mu\in \mathcal{P}$, we define the function $\pf_\mu: [0,1] \mapsto [0,1]$ via
$$\pf_\mu(x)\coloneqq \NLpf(x,\mu),\quad x\in[0,1].$$
Note that if $\NLpf$ is $(L_1,L_2)$-Lipschitz (recall Definition \ref{ass:Lipschitz}), then $\pf_\mu$ is $L_1$-Lipschitz.
 
 Construct the Markov chains $Z^{(\pf_\mu)}$ and $Z^{(\pf_\nu)}$ by means of the standard coupling (see Remark \ref{stcoupling}) from an arbitrary coupling of their initial values $Z^{(\pf_\nu)}_0\sim \mu^0$ and $Z^{(\pf_\mu)}_0\sim\mu^*$, respectively. As a consequence of Eq. \eqref{eq:pre_bound_00} and the fact that $\NLpf$ is $(L_1,L_2)$-Lipschitz, we conclude that
 	\begin{equation}\label{eq:bound-for-evolution-Q-chain}
	    \E [\vert Z^{(\pf_\mu)}_{n+1}-Z^{(\pf_\nu)}_{n+1} \vert]  \le \Bigg (  1- \dfrac{\delta (1-L_1)}{2}\Bigg)\E [\vert Z^{(\pf_\mu)}_n-Z^{(\pf_\nu)}_n\vert ] + \dfrac{\delta L_2}{2}\Was (\mu, \nu).
	\end{equation} 
 
 Iterating this inequality, we obtain
	\begin{align*}
	  \E [\vert Z^{(\pf_\mu)}_{n}-Z^{(\pf_\nu)}_{n} \vert] &\le \Bigg (  1- \dfrac{\delta (1-L_1)}{2}\Bigg)^n   \E [\vert Z^{(\pf_\mu)}_{0}-Z^{(\pf_\nu)}_{0} \vert] + \dfrac{ L_2}{1-L_1}\Was (\mu, \nu) . 
	\end{align*}	
	The above relation stands for any coupling of $(Z^{(\pf_\mu)}_{0},Z^{(\pf_\nu)}_{0})$. By considering a $\Was$-optimal coupling for the initial laws, we get
        \begin{equation}\label{eq:bound-for-iterarion-in-wasserstein}
            \begin{aligned}
    		\Was (\Law (Z^{(\pf_\mu)}_{n}), \Law (Z^{(\pf_\nu)}_{n})) &\le  \left (  1- \dfrac{\delta (1-L_1)}{2}\right)^n   \Was (\mu^0, \mu^*) + \dfrac{L_2}{1-L_1} \Was(\mu, \nu).
    	\end{aligned}
        \end{equation}
	
	In particular, if $\mu^0$ and $\mu^*$ are invariant distributions for $Z^{(\pf_\mu)}$, taking $\mu=\nu$ in the previous inequality yields 
	$$
	\Was (\mu^0, \mu^*) \le  \left (  1- \dfrac{\delta (1-L_1)}{2}\right)^n  \Was(\mu^0, \mu^*),
	$$
which in turn implies $\mu^0 = \mu^*$, i.e. the chain $Z^{(\pf_\mu)}$ admits at most one invariant distribution. Let us now prove the existence of an invariant distribution. Fix $\mu^0\in\Ps$, and set, for each $n\geq 0$, $\mu_n\coloneqq\Law (Z^{(\pf_\mu)}_{n})$ (in particular, $\mu_0=\mu^0$). Using with $\mu=\nu$ and $\mu^*=\mu_1$ \eqref{eq:bound-for-iterarion-in-wasserstein} we obtain
	\begin{align} \label{eq:crucial}
		\Was (\mu_n, \mu_{n+1} )\le  \Bigg (  1- \dfrac{\delta (1-L_1)}{2}\Bigg)^n\Was (\mu_0, \mu_1).
	\end{align}	
Thus the sequence $(\mu_n)_{n \ge 0}$ is a $\Was$-Cauchy sequence, and hence, it converges to a limiting law $\mu_\infty$. Now, for every continuous function $f: [0,1] \mapsto \R$, we have:
\begin{align*}
\mu_\infty f = \lim_{n \to \infty}\mu_{n} f = \lim_{n \to \infty} (\mu_{n-1} Q_\mu) f = \lim_{n \to \infty} \mu_{n-1} (Q_\mu f) = \mu_\infty (Q_\mu f) = (\mu_\infty Q_\mu) f, 
\end{align*}
where in the fourth equality we used that $Q_\mu f$ is continuous, which in turn follows from the form of $Q_\mu$ and the continuity $\p_\mu$. Thus the integrals of continuous functions with respect to the probability measures $\mu_\infty$ and $\mu_\infty Q_\mu$ coincide. Since this family is separating, we conclude that $\mu_\infty=\mu_\infty Q_\mu$. Hence, $\mu_\infty$ is invariant for $Q_\mu$. This ends the proof of the first statement.
	
Taking $\mu^0= T\mu$, $ \mu^*= T\nu$ in \eqref{eq:bound-for-iterarion-in-wasserstein} and letting $n \to \infty$ yields the second result. 
\end{proof}

\subsection{Wasserstein distance between Beta distributions}

The following result is used once in Section \ref{sec:case_study}. It should be standard, but we were unable to find a proof in the literature.

\begin{proposition}\label{prop:was_beta}
Let $a_1, a_2$, $b_1, b_2$ be positive numbers such that $a_1+a_2=b_1+b_2=M$ for some positive constant $M$. Then:
\begin{align*}
\Was (\betadist(a_1, a_2), \betadist(b_1, b_2) )= \dfrac{1}{M} \vert a_1-b_1 \vert.
\end{align*}
\end{proposition}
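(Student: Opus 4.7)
The plan is to prove the equality by establishing matching upper and lower bounds on the Wasserstein distance.

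For the lower bound, I would apply the Kantorovich–Rubinstein duality \eqref{eq:duality_was}. The identity function $f(x)=x$ on $[0,1]$ is $1$-Lipschitz, so
\[
\Was(\betadist(a_1,a_2),\betadist(b_1,b_2)) \;\ge\; \bigl|\mom{\betadist(a_1,a_2)}{1}-\mom{\betadist(b_1,b_2)}{1}\bigr|\;=\;\frac{|a_1-b_1|}{M},
\]
using the standard formula $\mom{\betadist(\alpha,\beta)}{1}=\alpha/(\alpha+\beta)$.

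For the matching upper bound, the idea is to use the quantile (monotone) coupling, which realizes the Wasserstein distance between one-dimensional distributions. To evaluate it in closed form, I would first show a stochastic dominance result: assuming without loss of generality $a_1\le b_1$ (so that $a_2\ge b_2$, since $a_1+a_2=b_1+b_2=M$), one has
\[
\betadist(a_1,a_2)\domby\betadist(b_1,b_2).
\]
This follows from a likelihood-ratio comparison: the ratio of densities is proportional to $x^{b_1-a_1}(1-x)^{b_2-a_2}=x^{b_1-a_1}(1-x)^{-(a_2-b_2)}$, which is non-decreasing in $x\in(0,1)$ under the stated sign conditions, hence the monotone likelihood ratio property implies stochastic dominance (see, e.g., \cite{lindvall2002}).

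Given this stochastic ordering, the quantile functions satisfy $F^{-1}_{a_1,a_2}(u)\le F^{-1}_{b_1,b_2}(u)$ for every $u\in(0,1)$, and the monotone coupling $(F^{-1}_{a_1,a_2}(U),F^{-1}_{b_1,b_2}(U))$ with $U\sim\unif$ is a $\Was$-optimal coupling in dimension one. Therefore
\[
\Was(\betadist(a_1,a_2),\betadist(b_1,b_2))=\int_0^1\!\bigl(F^{-1}_{b_1,b_2}(u)-F^{-1}_{a_1,a_2}(u)\bigr)\,\mathrm{d}u=\frac{b_1}{M}-\frac{a_1}{M},
\]
which matches the lower bound. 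The only mildly non-routine step is the stochastic dominance of Beta distributions; everything else is a direct application of standard one-dimensional optimal transport. No real obstacle is anticipated.
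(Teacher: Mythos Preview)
Your proof is correct, but it follows a different route from the paper's. The paper constructs an explicit coupling via the Gamma/Dirichlet representation: writing (after a reduction to rational parameters by continuity) $X=\frac{1}{Z}\sum_{j\le r_1}Z_j$ and $Y=\frac{1}{Z}\sum_{j\le s_1}Z_j$ with $Z_j$ i.i.d.\ Gamma and $Z=\sum Z_j$, one has $Y-X\ge 0$ and $\E[Y-X]$ is read off as the mean of a Beta distribution. Your argument instead establishes stochastic dominance abstractly via the monotone likelihood ratio of Beta densities and then invokes the quantile coupling. Your approach is arguably cleaner: it works directly for all positive parameters without the detour through rationals, and the MLR step is short. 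The paper's approach, on the other hand, exhibits a concrete probabilistic coupling (essentially a Dirichlet stick-breaking), which can be reused in related computations. Note also that you do not actually need the optimality of the quantile coupling: once you have any coupling with $X\le Y$ a.s., the upper bound $\E[|X-Y|]=\E[Y]-\E[X]$ already matches your lower bound.
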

\begin{proof}
Since the family of $\betadist$ distributions is weakly continuous with respect to the parameters, it suffices to show the result for rational numbers $a_1, a_2, b_1, b_2$. So, let $N \in \mathbb{N}$ such that $a_i=r_i/N, b_i=s_i/N $. We have $M=(r_1+r_2)/N= (s_1+s_2)/N$. Let $L\coloneqq r_1+r_2$, and consider an i.i.d. family $(Z_i)_{i=1}^{L}$ of random variables with common law $\gammadist (1/N, 1)$. Then, $Z\coloneqq \sum_{k=1}^{L} Z_i \sim \gammadist (M, 1)$ and consequently $X\coloneqq \sum_{j=1}^{r_1}Z_j/Z \sim \betadist(a_1, a_2)$, $Y\coloneqq \sum_{j=1}^{s_1}Z_j/Z\sim \betadist (b_1, b_2)$.  Assuming $s_1 \ge r_1$, we have
\begin{align*}
\E [\vert X-Y \vert ] &= \E \left [ \dfrac{1}{Z}\sum_{j=r_1+1}^{s_1}Z_i  \right] = \E \Bigg[\betadist \left(\dfrac{s_1-r_1}{N}, \dfrac{L-(s_1-r_1)}{N}\right)\Bigg] = \dfrac{s_1-r_1}{NM}= \dfrac{b_1-a_1}{M}.
\end{align*}
Since $\vert \E [X]-\E [Y]\vert =(a_1-b_1)/M$, the claim follows.
\end{proof}

\section{Existence and uniqueness of solutions for the McKean--Vlasov Wright--Fisher (MVWF) equation}\label{ss:existence_MV}

In this Appendix we focus on simple conditions that guarantee that the MVWF equation:
\begin{align}\label{eq:MV_MV}
\dd{\nl X}_t= \NLb({\nl X}_t,\nl{\mu}_t)\dd t + \sqrt{{\nl X}_t(1-{\nl X}_t)}\,\dd W_t, \quad {\mu}_t= \Law (\nl X_t),\quad {\nl X}_0 \sim \overline \mu_0,
\end{align}
admits a unique weak solution on $[0, +\infty)$ (see definition \ref{def:weak_solution}). The Assumption \ref{ass:Lipschitz_bound} below will be enough for MVWF models with selection; under this assumption, the existence/uniqueness problem for the McKean--Vlasov equation will be addressed via an absolutely-continuous change of measure. The situation for the non-linear Wright--Fisher diffusion with pure mutation (which is necessary in order to work out the case study at Section \ref{sec:case_study}) is somewhat different, and we will say some words about this case when necessary. 

Fix $\mu_0 \in \mathcal{P}$. For $t \ge 0$,  let $\mathcal{P}_{t}\coloneqq\mathcal{P} (\mathcal{C}([0,t]; [0,1]))$ be the space of probability measures on the space $\mathcal{C} ([0, t]; [0,1])$ endowed with the Borel $\sigma$-field associated to the topology of the supremum norm such that the image measure under the coordinate map $(x_s)_{s \in [0, t]} \mapsto x_0$ equals $\mu_0$. We turn $\mathcal{P}_{t}$ into a metric space through the associated Wasserstein distance:
\begin{align} \label{def:was_C}
	\Was (t; \mu, \nu) = \inf_{\pi} \Bigg (\int_{\mathcal{C} ([0,t]; [0,1])^2} \bigg (\sup_{s \in [0, t]} \vert x_s-y_s\vert \bigg) \pi (\dd x \dd y) \Bigg) ,
\end{align}
where the infimum is taken over all the couplings $\pi$ of $(\mu, \nu)$. Observe that whenever $T \ge t$, the space $\mathcal{P}_t$ can be embedded into $\mathcal{P}_T$ and furthermore:
\begin{align*}
	\Was (t; \mu, \nu) = \inf_{\pi} \Bigg (\int_{\mathcal{C} ([0,T]; [0,1])^2} \bigg (\sup_{s \in [0, t]} \vert x_s-y_s\vert \bigg) \pi (\dd x \dd y) \Bigg),
\end{align*}
where now the infimum is taken over all the measures on $\mathcal{C} ([0, T]; [0,1])^2$ such that the restriction to $\mathcal{C}([0,t]; [0,1])^2$ has marginals $\mu$ and $\nu$.

Recall that we are working under the assumption \ref{ass:on_b}. Let $a(x)= \sqrt{x(1-x)}$. Consider the following condition.
\begin{assumption} \label{ass:Lipschitz_bound}
	For every $\mu, \nu \in \mathcal{P} (\mathcal{C} ([0, +\infty); [0,1]))$, and any $x \in [0,1]$:
	\begin{align*}
		\left \vert  \dfrac{b^{(\mu)}  (x, t)  - b^{(\nu)} (x, t)}{a (x)}  \right \vert \le C \Was (\mu_t, \nu_t),
	\end{align*}
 where we used the notation $b^{(\mu)}(x, t)=\NLb (x, \mu_t)$. 
\end{assumption}

\begin{theorem}\label{th:MV_existence_uniqueness}
    For every $\mu \in \mathcal{P}_t$ define $\Phi_t (\mu)\coloneqq \Law_\Pb(\difMu_{[0,t]}) \in \mathcal{P}_t$. Under Assumption \ref{ass:Lipschitz_bound}, the map $\mu \mapsto \Phi_t (\mu)$ admits a unique fixed point. In particular, Eq. \eqref{eq:MV} admits a unique weak solution on $t \ge 0$.
\end{theorem}
\begin{proof}
On a filtered probability space $(\Omega, \FF, (\FF_s), \prob)$, let $\difMu$ and $\difNu$ be strong solutions to equation \eqref{eq:linearized-sde} on $[0, t]$ for $\mu, \nu \in \mathcal{P}_t$, respectively, driven by the same Brownian motion $W$ and with identical initial distribution $\mu_0$. Consider the adapted process $(U_s: s \in [0, t])$ given by: 
	\begin{align*}
		U_s = \exp \Bigg\{    - \int_{0}^{s} \dfrac{b^{\nu} (\difNu_r, r)- b^{\mu} (\difNu_r, r)  }{a (\difNu_r)} \dd W_r - \dfrac{1}{2} \int_{0}^{s} \left (    \dfrac{b^{\nu} (\difNu_r, r)- b^{\mu} (\difNu_r, r)  }{a (\difNu_r)}\right)^2 \dd r   \Bigg\}.
	\end{align*}
	Assumption \ref{ass:Lipschitz_bound} ensures that $U$ is a $\FF_s$-martingale . Let $\Q$ be the probability measure defined on $\FF_t$ through  $\dd\Q/\dd\prob=  U_t$, and for $s \le t$ write $\Q_s$ for its restriction to $\FF_s$. Note that applying Girsanov's theorem (see \cite[Chap 4.]{IkedaWatanabe1989}, \cite[Sec. 6.3]{book:lipster-shiryayev-vol1} and \cite[Sec. 17.2,17.3]{book:lipster-shiryayev-vol2}), it follows that
    $$
    \Law_\Q(\difNu_{[0,t]}) = \Law_\Pb(\difMu_{[0,t]}). 
    $$
So, applying \eqref{eq:wass-TV-inequality}, we obtain
\begin{align*}
	\Was (t; \Phi_t (\mu), \Phi_t (\nu))&  \le d_{TV} (\Phi_t (\mu),\, \Phi_t(\nu))  = d_{TV} (\Law_\Pb(\difMu_{[0,t]}) ,\, \Law_\Pb(\difNu_{[0,t]}) ) \\
    &= d_{TV} (\Law_\Q(\difNu_{[0,t]}) ,\, \Law_\Pb(\difNu_{[0,t]}) )\le d_{TV} (\prob_t, \Q_t), \nonumber
\end{align*}
where in the last inequality we use that the total variation distance between the pushforward measures is dominated by the total variation distance of the original measures. Then by Pinsker's inequality (see e.g. \cite{tysbakov2009}, Lemma 2.5, at page 88), the last term in the right-hand side of the last inequality is smaller than the relative entropy between $\prob_t$ and $\Q_t$. Hence, for $t \ge 0$:
\begin{align*}
	\Was (t; \Phi_t (\mu), \Phi_t (\nu))
	&\le \sqrt {   2 \E_{\prob} \left [\log \dfrac{d \Q_t}{d \prob_t}\right]      } = \sqrt {   2 \E_{\prob} \left [\log(U_t) \right]      } .
	\end{align*}
	Applying It\^o's lemma, Assumption \ref{ass:Lipschitz_bound} yields
	\begin{align*}
 	2\E_{\prob} \left [\log(U_t) \right]  &\le   C  \int_{0}^{t} \Was^2 (\mu_s, \nu_s)\dd s,
	\end{align*}
from where it follows
\begin{align}\label{eq:was_contract}
	\Was^2 (t; \Phi_t (\mu), \Phi_t (\nu))
	& \le C \int_{0}^t \Was^2 (s; \mu, \nu) \dd s.
\end{align}
Fix $\mu^{(0; t)} \in \mathcal{P}_t$, and for $n \ge 0$ define $\mu^{(n+1; t)}\coloneqq \Phi_t (\mu^{(n; t)})$. Then, iterating the above relation we obtain:
\begin{align*}
	\Was^2 (t; \mu^{(n+1;t)}, \mu^{(n;t)}) \le \dfrac{C^{n}t^n}{n!} \Was^2 (t; \mu^{(1;t)}, \mu^{(0;t)}).
\end{align*}
Consequently, for every $t \ge 0$, the sequence $(\mu^{(n;t)})_{n \ge 1}$ is a Cauchy sequence on $\mathcal{P}_t$, and hence convergent to a limit $\mu^{(\infty; t)}$, say. Notice that from \eqref{eq:was_contract}, we have
\begin{align*}
	\Was^2 (t; \Phi_t (\mu^{(\infty; t)}), \mu^{(n+1;t)})
	& \le C t \Was^2 (t;\mu^{(\infty; t)}, \mu^{(n;t)}) ,
\end{align*}
taking limit $n\to\infty$, we obtain that $\mu^{(\infty; t)}$ is a fixed point for $\Phi_t$. Finally, if $\mu_1$, $\mu_2$ are fixed points for $\Phi_t$, \eqref{eq:was_contract} yields
$$
\Was^2 (t; \mu_1, \mu_2))
	 \le C \int_{0}^t \Was^2 (s; \mu_1, \mu_2) \dd s,
$$
and from Gronwall's lemma we conclude $\mu_1=\mu_2$.

To prove the second claim, a general remark is in order. Assume that we have a sequence $(\nu_N)_{ N \ge 1}$ of probability measures on the metric space $\mathcal{C} ([0, t]; [0,1])$ converging weakly to a probability measure $\nu^{(t)}$. If $(\GG_s)_{s \in [0,t]}$ is the canonical filtration, then the sequence of restrictions $(\nu_N \vert_{\GG_s})_{N \ge 1}$ converge weakly to the measure $\nu^{(t)} \vert_{\GG_s}$ for every $s \le t$. This remark applies to the above sequences $(\mu^{(n;t)})_{n \ge 1}$, and moreover, the uniqueness just proved yields
\begin{align*}
\mu^{(\infty; t)} \vert_{\GG_s} = \mu^{(\infty; s)}. 
\end{align*}
Thus, we can unambiguously define a probability measure on $\mathcal{C} ([0, +\infty); [0,1])$ via:
\begin{align*}
\mu^{(\infty)} := \mu^{(\infty; M)} \text{ on } \GG_M, M \ge 1.
\end{align*}
Define, finally, on a certain probability space $(\Omega', \FF', (\FF'_t), \prob')$ carrying a standard Brownian motion $W$ the process $\nl X$ as the unique solution of the SDE:
\begin{align*}
\dd \nl X_t = b^{(\mu^{(\infty)})} (\nl X_t, t)\dd t +\sqrt{\nl X_t (1- \nl X_t)}\dd W_t, \quad \nl X_0 \sim \mu_0.
\end{align*}
By construction, for every $t \ge 0$ we have $\Law (\nl X_t)= \mu^{(\infty)}_t$. This proves our claim.
\end{proof}

\end{document}